\numberwithin{equation}{section}
\numberwithin{figure}{section}
\theoremstyle{plain}
\newtheorem{thm}{\protect\theoremname}
 \newcommand\thmsname{\protect\theoremname}
 \newcommand\nm@thmtype{theorem}
 \theoremstyle{plain}
 \newenvironment{namedthm}[1][Undefined Theorem Name]{
   \ifx{#1}{Undefined Theorem Name}\renewcommand\nm@thmtype{theorem*}
   \else\renewcommand\thmsname{#1}\renewcommand\nm@thmtype{namedtheorem}
   \fi
   \begin{\nm@thmtype}}
   {\end{\nm@thmtype}}
 \theoremstyle{definition}
 \newtheorem*{defn*}{\protect\definitionname}
\newenvironment{lyxlist}[1]
{\begin{list}{}
{\settowidth{\labelwidth}{#1}
 \setlength{\leftmargin}{\labelwidth}
 \addtolength{\leftmargin}{\labelsep}
 }}
{\end{list}}
  \theoremstyle{plain}
  \newtheorem{lem}[thm]{\protect\lemmaname}
  \theoremstyle{plain}
  \theoremstyle{remark}
  \newtheorem{rem}[thm]{\protect\remarkname}
  \theoremstyle{definition}
  \newtheorem{defn}[thm]{\protect\definitionname}
  \theoremstyle{plain}
  \newtheorem{prop}[thm]{\protect\propositionname}
\renewenvironment{proof}[1][\proofname]{\par
      \pushQED{\qed}%
      \normalfont \topsep6\p@\@plus6\p@\relax
      \trivlist
      \item[\hskip\labelsep
            \bfseries
        #1\@addpunct{.}]\ignorespaces
    }{%
  \popQED\endtrivlist\@endpefalse
}
  \providecommand{\corollaryname}{Corollary}
  \providecommand{\definitionname}{Definition}
  \providecommand{\lemmaname}{Lemma}
  \providecommand{\propositionname}{Proposition}
  \providecommand{\remarkname}{Remark}
  \providecommand{\theoremname}{Theorem}
\providecommand{\theoremname}{Theorem}
\begin{document}

\title{Floer homology for magnetic fields with at most linear growth on the universal
cover}

\author{Urs Frauenfelder, Will J. Merry and Gabriel P. Paternain}
\maketitle
\begin{abstract}
The Floer homology of a cotangent bundle is isomorphic to loop space
homology of the underlying manifold, as proved by Abbondandolo-Schwarz,
Salamon-Weber, and Viterbo. In this paper we show that in the presence
of a Dirac magnetic monopole which admits a primitive with at most linear
growth on the universal cover, the Floer homology in atoroidal free
homotopy classes is again isomorphic to loop space homology. As a
consequence we prove that for any atoroidal free homotopy class and
any sufficiently small $\tau>0$, any magnetic flow associated to
the Dirac magnetic monopole has a closed orbit of period $\tau$ belonging
to the given free homotopy class. In the case where the Dirac magnetic
monopole admits a bounded primitive on the universal cover we also
prove the Conley conjecture for Hamiltonians that are quadratic at
infinity, i.e., we show that such Hamiltonians have infinitely many
periodic orbits.
\end{abstract}

\section{Introduction}

We are interested in Hamiltonian systems of the following form. The
configuration space $M$ is a closed connected oriented manifold of
dimension $n\geq2$. The Hamiltonian $H$ is a smooth function on
the phase space $T^{*}M$ which might in addition depend periodically
on time. The Dirac magnetic monopole is a closed two-form $\sigma\in\Omega^{2}(M)$
which gives rise to a twisted symplectic form \cite{ArnoldGivental1990,Ginzburg1996}
on the cotangent bundle 
\[
\omega_{\sigma}=d\lambda+\pi^{*}\sigma,
\]
 where $\lambda$ is the Liouville one-form on $T^{*}M$ and $\pi\colon T^{*}M\to M$
is the footpoint projection. The flow is generated by the time dependent
Hamiltonian vector field $X_{H,\sigma}$ defined implicitly by the
equation 
\[
-dH=\omega_{\sigma}(X_{H,\sigma},\cdot).
\]
Floer's semi-infinite dimensional Morse homology associates to a Hamiltonian
system a chain complex which is generated by the periodic orbits of
a given fixed period $\tau>0$ and a given free homotopy class $\alpha\in[S^{1},M]$,
and defines a boundary operator by counting perturbed holomorphic
cylinders which asymptotically converge to the periodic orbits. A
priori it is far form obvious that this recipe gives a well defined
boundary operator. Indeed, the question if Floer's boundary operator
is well-defined or not depends on a difficult compactness result for
the perturbed holomorphic curve equation. Tentatively we write $HF_{*}^{\alpha}(H,\sigma,\tau)$
for the Floer homology with the $\tau$-periodic Hamiltonian $H$,
the magnetic monopole $\sigma$, and the free homotopy class $\alpha$.
In order to avoid discussions about orientations of moduli spaces
we take coefficients in $\mathbb{Z}_{2}$. In the case where the magnetic
monopole vanishes and the Hamiltonian satisfies some asymptotic fibrewise
quadratic growth condition considered by Abbondandolo-Schwarz the
following remarkable result holds true.
\begin{thm}
\label{thm:[Abbondandolo-Schwarz-,-Salamon-}\emph{{[}Abbondandolo-Schwarz
\cite{AbbondandoloSchwarz2006}, Salamon-Weber \cite{SalamonWeber2006},
Viterbo \cite{Viterbo1996}{]}.} If the $\tau$-periodic $H$ satisfies
the Abbondandolo-Schwarz growth conditions, then Floer homology $HF_{*}^{\alpha}(H,0,\tau)$
is well-defined in every free homotopy class $\alpha$, and is isomorphic
to the singular homology of the space of $\tau$-periodic loops on
$M$ belonging to the given free homotopy class $\alpha$. 
\end{thm}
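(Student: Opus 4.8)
The plan is to realise the Floer complex $CF_*^\alpha(H,0,\tau)$ and a Morse-type complex computing $H_*(\Lambda^\alpha M;\mathbb{Z}_2)$ on the \emph{same} set of generators, and then to build an explicit chain isomorphism between them. The bridge is the fibrewise Legendre transform: the Abbondandolo--Schwarz growth conditions on $H$ guarantee that
\[
L(t,q,v):=\max_{p\in T_q^*M}\big(\langle p,v\rangle-H(t,q,p)\big)
\]
is a smooth, $\tau$-periodic, fibrewise strictly convex and superlinear (Tonelli) Lagrangian with matching quadratic bounds, and that the Euler--Lagrange flow of $L$ is conjugate to the Hamiltonian flow of $X_{H,0}$ via the Legendre diffeomorphism $TM\to T^*M$. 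Hence the $\tau$-periodic orbits of $X_{H,0}$ in the class $\alpha$ are in bijection with the critical points of the action functional $\mathcal{S}_L(\gamma)=\int_0^\tau L(t,\gamma,\dot\gamma)\,dt$ on the component $\Lambda^\alpha M\subset W^{1,2}(\mathbb{R}/\tau\mathbb{Z},M)$, and after a generic perturbation making all orbits nondegenerate one checks that their Conley--Zehnder indices (with the appropriate normalisation) coincide with the Morse indices of $\mathcal{S}_L$.

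Next I would establish the two homologies separately. On the variational side, $\mathcal{S}_L$ is bounded below, satisfies the Palais--Smale condition, and—since $W^{1,2}(\mathbb{R}/\tau\mathbb{Z},M)$ is homotopy equivalent to the space of continuous $\tau$-periodic loops—classical infinite-dimensional Morse theory shows that the Morse complex of $\mathcal{S}_L$, built from a smooth pseudo-gradient vector field (needed because $\mathcal{S}_L$ is only $C^{1,1}$ for a general Tonelli $L$), computes $H_*(\Lambda^\alpha M;\mathbb{Z}_2)$. On the Floer side one must check that the moduli spaces of finite-energy solutions of $\partial_s u+J(u)\big(\partial_t u-X_{H,0}(u)\big)=0$ with asymptotics in the $\tau$-periodic orbits of class $\alpha$ are compact modulo the $\mathbb{R}$-action; this is exactly where the fibrewise quadratic growth of $H$ enters, yielding uniform $L^\infty$ and then $C^\infty_{\mathrm{loc}}$ bounds. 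Together with standard transversality this makes $HF_*^\alpha(H,0,\tau)$ well defined in every class.

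Finally I would construct the comparison map $\Phi\colon CF_*^\alpha(H,0,\tau)\to CM_*(\mathcal{S}_L)$ by counting, for a Hamiltonian orbit $x$ and a critical point $q$ of $\mathcal{S}_L$ of the same index, solutions $u\colon(-\infty,0]\times(\mathbb{R}/\tau\mathbb{Z})\to T^*M$ of Floer's equation with $u(s,\cdot)\to x$ as $s\to-\infty$ and with $\pi\circ u(0,\cdot)$ equal to the initial point of a negative-gradient trajectory of $\mathcal{S}_L$ converging to $q$; the reverse construction (a half-tube on $[0,\infty)$ with its $s=0$ boundary on the unstable manifold of $q$) gives $\Psi$, and a gluing/cobordism argument shows that $\Phi$ and $\Psi$ are mutually inverse chain homotopy equivalences. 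The main obstacle throughout—and the reason the growth hypothesis is indispensable—is the a priori $C^0$-bound for solutions of Floer's equation and of these mixed half-tube/gradient-flow equations: without control at infinity in the fibre the moduli spaces can escape to the ends of $T^*M$ and neither the Floer differential nor $\Phi$ is defined. A secondary technical point is the limited regularity of $\mathcal{S}_L$, handled via the pseudo-gradient construction above, or alternatively (following Salamon--Weber) by realising the loop-space functional as the adiabatic limit of the Floer functional and proving that for a sufficiently small rescaling parameter the two sets of connecting trajectories are in bijection, with Viterbo's symplectic-homology argument giving yet a third route to the same conclusion.
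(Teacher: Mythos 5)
This statement is not proved in the paper at all: it is quoted as a known result with references to Abbondandolo--Schwarz, Salamon--Weber and Viterbo, and the paper's own contribution (Theorem A) is deduced from it via a continuation isomorphism. So there is no internal proof to compare against; what you have written is a summary of the Abbondandolo--Schwarz proof, which is indeed the route the rest of the paper is modelled on (their Lemma 1.12 and Theorem 1.14 for the $C^{0}$ bounds, and the Lagrangian/Morse side developed in the paper's appendix).

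As a sketch of that external proof your outline is essentially right, with one caveat worth flagging. Abbondandolo and Schwarz do not establish the isomorphism by constructing two hybrid moduli problems $\Phi$ and $\Psi$ and showing by gluing that they are mutually inverse chain homotopy equivalences; they construct a single chain map (from the Morse complex of $\mathcal{S}_{L}$ to the Floer complex, counting half-cylinders whose boundary loop projects into the unstable manifold of a critical point) and prove it is an isomorphism by an algebraic argument: the inequality $\mathcal{A}_{H}(x)\leq\mathcal{S}_{L}(\pi\circ x)$ forces the map to be upper triangular with respect to the action filtration with $\pm1$ on the diagonal. The ``reverse'' hybrid problem you describe is considerably more delicate (the relevant action inequality goes the wrong way, and the $C^{0}$ estimates for it are nontrivial), and making both directions work geometrically was the subject of later work. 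You also gloss over the index comparison (Conley--Zehnder index with the vertical-preserving trivialization versus the Morse index of $\mathcal{S}_{L}$), which is a genuine theorem rather than a routine check. None of this invalidates your outline as a description of the literature, but as written the final step would not go through in the form you state it.
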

The precise definition of the Abbondandolo-Schwarz growth condition
is given in Definition \ref{AS growth conditions} below. In this
paper we will prove the following extension of Theorem \ref{thm:[Abbondandolo-Schwarz-,-Salamon-}.
\begin{namedthm}[\textbf{Theorem A}]
\textbf{}Assume that the $\tau$-periodic Hamiltonian $H$ satisfies
the Abbondandolo-Schwarz growth conditions and that $\sigma$ admits
a primitive of at most linear growth\textbf{\emph{ }}on the universal cover
of $M$. Then there exists $\delta_{0}(H,\sigma)>0$ such that if
$|\delta|\tau<\delta_{0}(H,\sigma)$ then the Floer homology $HF_{*}^{\alpha}(H,\delta\sigma,\tau)$
is well defined for all $\sigma$-atoroidal classes $\alpha\in[S^{1},M]$,
and is again isomorphic to the singular homology of the space of $\tau$-periodic
loops on $M$ belonging to the given free homotopy class $\alpha$.
If moreover $\sigma$ admits a bounded primitive on the universal
cover then $\delta_{0}(H,\sigma)=\infty$.
\end{namedthm}
We now explain the two new terms in the statement of Theorem A: a
primitive of at most linear growth, and $\sigma$-atoroidal free homotopy
class. 
\begin{defn*}
We say that $\sigma$ admits a \textbf{primitive with at most linear growth}
if the following condition holds:
\begin{lyxlist}{00.00.0000}
\item [{\textbf{(}$\boldsymbol{\sigma}$\textbf{1)}}] The $2$-form $\sigma$
is \textbf{weakly exact}. This means that the lift $\widetilde{\sigma}$
of $\sigma$ to $\widetilde{M}$ is exact (in particular, $\sigma$
is closed). Moreover $\widetilde{\sigma}$ admits a primitive with
\textbf{at most linear growth}: there exists $\theta\in\Omega^{1}(\widetilde{M})$
such that $d\theta=\widetilde{\sigma}$ and such that for any $z\in\widetilde{M}$
there exists a constant $\Theta_{z}>0$ such that for all $r\geq0$,
\begin{equation}
\sup_{q\in B(z,r)}\left|\theta_{q}\right|\leq\Theta_{z}(r+1).\label{eq:linear growth-1}
\end{equation}

\end{lyxlist}

Here $B(z,r)$ denotes the geodesic ball of radius $r$ in $\widetilde{M}$
about $z$, and both the geodesic metric and the norm $\left|\cdot\right|$
are defined using the lift of some Riemannian metric $g$ on $M$ to $\widetilde{M}$.
Asking whether $\widetilde{\sigma}$ has a primitive with at most linear
growth does not depend on the choice of metric $g$ on $M$. Moreover
as soon as \eqref{eq:linear growth-1} holds for some point $z\in\widetilde{M}$,
it holds for all $z\in\widetilde{M}$.

\end{defn*}
\noindent \textbf{Remarks. }
\begin{enumerate}
\item The condition \textbf{(}$\boldsymbol{\sigma}$\textbf{1) }includes
the following stronger condition: 

\begin{lyxlist}{00.00.0000}
\item [{\textbf{(}$\boldsymbol{\sigma}$\textbf{0)}}] The $2$-form $\sigma$
is \textbf{weakly exact}, and $\widetilde{\sigma}$ admits a \textbf{bounded
}primitive: there exists $\theta\in\Omega^{1}(\widetilde{M})$ such
that $d\theta=\widetilde{\sigma}$ and such that 
\begin{equation}
\sup_{q\in\widetilde{M}}\left|\theta_{q}\right|<\infty.\label{eq:bounded-1}
\end{equation}

\end{lyxlist}
\item A classical result of Gromov \cite{Gromov1991} tells us that if $M$
admits a metric of negative curvature then every closed 2-form $\sigma$
satisfies \textbf{(}$\boldsymbol{\sigma}$\textbf{0)}. In contrast,
if $\sigma$ is not exact and $\pi_{1}(M)$ is \textbf{amenable }then
$\sigma$ never satisfies \textbf{(}$\boldsymbol{\sigma}$\textbf{0)}
\cite[Corollary 5.4]{Paternain2006}. The main examples of pairs $(M,\sigma)$
where $\sigma$ satisfies \textbf{(}$\boldsymbol{\sigma}$\textbf{1)}
are given by manifolds that admit a metric of non-positive curvature
\cite{CaoXavier2001}. For tori $\mathbb{T}^{n}$ any closed non-exact 2-form
$\sigma$ satisfies \textbf{(}$\boldsymbol{\sigma}$\textbf{1)} but
not \textbf{(}$\boldsymbol{\sigma}$\textbf{0)}. 
\end{enumerate}
Set $\mathbb{S}_{\tau}:=\mathbb{R}/\tau\mathbb{Z}$. We often identify $S^1$ and $\mathbb{S}_1$. We denote by
$\Lambda_{\tau}M:=C^{\infty}(\mathbb{S}_{\tau},M)$ the free $\tau$-periodic
loop space of $M$. The space $\Lambda_{\tau}M$ splits as a direct
sum $\Lambda_{\tau}M=\bigoplus_{\alpha\in[S^{1},M]}\Lambda_{\tau}^{\alpha}M$,
where for a given free homotopy class $\alpha\in[S^{1},M]\cong[\mathbb{S}_{\tau},M]$,
we define $\Lambda_{\tau}^{\alpha}M:=\left\{ q\in\Lambda_{\tau}M\,:\,[q]=\alpha\right\} $.
Consider the 1-form $a_{\sigma}\in\Omega^{1}(\Lambda_{\tau}M)$ defined
by 
\begin{equation}
a_{\sigma}(q)(\xi):=\int_{\mathbb{S}_{\tau}}\sigma(\dot{q},\xi)dt,\label{eq:a sigma}
\end{equation}
Since $\sigma$ is closed, $a_{\sigma}$ is closed, that is, the integral of $a_{\sigma}$ over a closed path in $\Lambda_{\tau}M$ depends only the homology class of the path. 
\begin{defn*}
We say a class $\alpha\in[S^{1},M]$ is a \textbf{$\sigma$-atoroidal
class} if any map $f:S^{1}\rightarrow\Lambda_{1}^{\alpha}M$ with
$[f]=\alpha$ satisfies $\int_{S^{1}}f^{*}a_{\sigma}=0$. Equivalently,
$\alpha$ is a $\sigma$-atoroidal class if $a_{\sigma}|_{\Lambda_{\tau}^{\alpha}M}$
is exact (see \eqref{eq:why suwhy est}). Note that under the assumption that $\sigma$ is
weakly exact, the class $0$ of nullhomotopic loops is atoroidal,
since both statements are equivalent to the statement that $\sigma|_{\pi_{2}(M)}=0$.
\end{defn*}
Let us briefly comment how these assumptions enter the proof of Theorem
A:
\begin{enumerate}
\item The mere fact that $\sigma$ admits a primitive on the universal cover
implies that $\sigma$ vanishes on $\pi_{2}(M)$. Hence $\omega_{\sigma}$
is symplectically aspherical and no bubbling off of holomorphic spheres
can occur. This excludes the first obstruction to the compactness results needed 
to define the boundary operator. 
\item On $\sigma$-atoroidal classes the action functional used to define
the boundary operator is real valued, and hence the energy of its
gradient flow lines depends only on their asymptotes. This excludes
the second obstruction to the necessary compactness to define the
boundary operator. 
\item The third obstruction to compactness comes from the noncompactness
of $T^{*}M$. To obtain an $L^{\infty}$-bound on the perturbed holomorphic
curves we follow the approach by Abbondandolo and Schwarz
\cite{AbbondandoloSchwarz2006}. The assumption that $\sigma$ admits
a primitive with at most linear growth on the universal cover gives rise
to a certain quadratic isoperimetric inequality which allows us to
carry over the proof of Abbondandolo-Schwarz to this more general
set-up. This enables us to show that the Floer homology groups $HF_{*}^{\alpha}(H,\delta\sigma,\tau)$
are well defined. Taking advantage of the quadratic isoperimetric
inequality once more we construct a continuation isomorphism from
$HF_{*}^{\alpha}(H,\delta\sigma,\tau)$ to the Floer homology $HF_{*}^{\alpha}(H,0,\tau)$,
and hence Theorem \ref{thm:[Abbondandolo-Schwarz-,-Salamon-} implies
our result.
\end{enumerate}
The necessity of the assumption that $\left|\delta\right|\tau$ is
small in Theorem A can be seen from the following example. Take as
configuration space $M=\mathbb{T}^{2}$ the two-torus and as the Hamiltonian
$H$ take kinetic energy with respect to the standard flat metric
on the torus. As magnetic monopole we choose the area form $\sigma$
with respect to the standard metric, and work with period $\tau=1$.
Then for each $\delta\sigma$ the flow lines are either constant orbits
on $\mathbb{T}^{2}$ or lift to circles of period $2\pi/\delta$ on
the universal cover $\mathbb{R}^{2}$ of $\mathbb{T}^{2}$. Thus as
long as $\delta<2\pi$ the only periodic solutions of period one are
the constant ones. Hence the critical manifold is a two-torus and
the Floer homology is isomorphic to the homology of $\mathbb{T}^{2}$
which coincides with the homology of the contractible component of
the loop space of $\mathbb{T}^{2}$. However if $\delta=2\pi$ then
the critical manifold is diffeomorphic to $T^*\mathbb{T}^{2}$ and hence not compact anymore and one cannot define
Floer homology. If $\delta$ becomes larger than $2\pi$ the critical
manifold is again a two-torus. But one can check that the Conley-Zehnder
index of the critical manifold jumps by two once $\delta$ goes through
$2\pi$ and therefore the Floer homology now differs from the loop
space homology. 

However, on the torus it \textbf{is }in fact possible to define the
Floer homology $HF_{*}^{\alpha}(H,\delta\sigma,\tau)$ provided $\left|\delta\right|\tau\notin2\pi\mathbb{Z}$,
for \textbf{any }free homotopy class $\alpha\in[S^{1},\mathbb{T}^{2}]$.
More generally, let $g=\left\langle \cdot,\cdot\right\rangle$ denote a Riemannian metric on $\mathbb{T}^{2}$ and $f\in C^{\infty}(\mathbb{T}^{2},\mathbb{R})$.
Set $\sigma=f\,\mu_{g}$, and suppose there exists $k\in\mathbb{Z}$
such that 
\[
\frac{2\pi(k-1)}{\tau}<f(q)<\frac{2\pi k}{\tau}\ \ \ \mbox{for all }q\in\mathbb{T}^{2}.
\]
Fix $V\in C^{\infty}(\mathbb{S}_{\tau}\times \mathbb{T}^{2},\mathbb{R})$ and set
$H(t,q,p):=\frac{1}{2}\left|p\right|^{2}+V(t,q)$. Then $HF_{*}^{\alpha}(H,\sigma,\tau)$
is well defined for every free homotopy class $\alpha\in[S^{1},\mathbb{T}^{2}]$,
and moreover
\[
HF_{*}^{\alpha}(H,\sigma,\tau)=\begin{cases}
H_{*+2k}(\mathbb{T}^{2};\mathbb{Z}), & \alpha=0,\\
0, & \alpha\ne0.
\end{cases}
\]
The proof of this result is specific to tori, and as such goes along
somewhat different lines to that of Theorem A. For this reason the details of this proof will be discussed in a forthcoming paper.\newline

Theorem A has the following immediate corollary. Recall that if $H$ is given
by a Riemannian metric, the Hamiltonian flow of $X_{H,\sigma}$ is called
a {\bf magnetic flow}.

\begin{namedthm}[\textbf{Corollary B}]
\textbf{} Let $H$ be an autonomous Hamiltonian satisfying the Abbondandolo-Schwarz growth conditions and assume that
$\sigma$ admits a primitive with at most linear growth on the universal cover
of $M$. Let $\alpha\in [S^1,M]$ be a $\sigma$-atoroidal class.
Then for any $\tau >0$ sufficiently small, the Hamiltonian flow of $X_{H,\sigma}$ has a closed orbit with period $\tau$ whose projection to $M$ belongs to the class $\alpha$. In particular, the same is true for any magnetic flow associated to $\sigma$.
\end{namedthm}

To appreciate the significance of Corollary B consider the following example.
Let $M=\mathbb T^3$
and $\sigma$ any closed 2-form cohomologous to $dq_1\wedge dq_2$, where
$(q_1,q_2,q_3)$ are linear coordinates on the torus. It is easy to check that
the homotopy class $\alpha=(0,0,n)$ for any integer $n$ is $\sigma$-atoroidal.
Then given any metric on $\mathbb T^3$ the magnetic flow has a closed
orbit of period $\tau$ in the class $\alpha$ for all $\tau>0$ sufficiently small. In fact, for the standard flat metric and $\sigma=dq_1\wedge dq_2$, the
classes $(0,0,n)$ are the only ones that contain closed orbits of any period.\newline

Finally, in the case where $\widetilde{\sigma}$ admits a bounded
primitive, note that Theorem A tells us that in particular 
\[
HF_{n}^{0}(H,\sigma,\tau)\cong H_{n}(\Lambda_{\tau}^{0}M;\mathbb{Z}_{2})\ne0
\]
for \textbf{all} $\tau$-periodic Hamiltonians $H$ satisfying the
Abbondandolo-Schwarz growth conditions. As a consequence, Hein's proof
\cite{Hein2011} of the Conley conjecture for the cotangent bundle
(which is itself based on Ginzburg's proof \cite{Ginzburg2010} for
closed symplectically aspherical symplectic manifolds) goes through
word for word, and thus we obtain the following statement.

\begin{namedthm}[Corollary C]\emph{(The Conley Conjecture for twisted cotangent bundles)}
Assume that $\widetilde{\sigma}$ admits a bounded primitive. Let
$\varphi=\phi_{1}^{H}:T^{*}M\rightarrow T^{*}M$ denote the time-1
map of a Hamiltonian $H:S^1 \times T^{*}M\rightarrow\mathbb{R}$
satisfying the Abbondandolo-Schwarz growth conditions. Assume that
$\varphi$ has only finitely many fixed points. Then $\varphi$ has
simple periodic orbits of arbitrarily large period.
\end{namedthm}

This paper has an appendix in which we show that a more classical
approach is possible if we restrict to Hamiltonians that are in addition
strictly fibrewise convex. More precisely, we obtain a (Lagrangian)
action functional on the (completed) loop space $\Lambda_{\tau}^{\alpha}M$
which we show satisfies the Palais-Smale condition provided that $\sigma$
admits a primitive of at most linear growth, $H$ is fibrewise strictly
convex and satisfies the Abbondandolo-Schwarz growth conditions, and
$\left|\delta\right|\tau$ is sufficiently small. The Palais-Smale
condition allows the construction of the Morse complex so one can
recover again the homology of the loop space. We expect that it would
be possible to prove Corollary C in the Lagrangian setting by combining
the methods of the appendix with the work of Lu \cite{Lu2009,Lu2011} or Mazzucchelli \cite{Mazzucchelli2011a}.\newline

\emph{Acknowledgements}. We thank Viktor Ginzburg for pointing out to us that Corollary C followed
directly from Theorem A and the work of Hein. We also thank the referees for their careful reading of our manuscript, and for numerous helpful comments. U.\,Frauenfelder was
partially supported by the Basic Research grant 2010-0007669 funded
by the Korean government.

\section{Constructing the Floer homology $HF_{*}^{\alpha}(H,\sigma,\tau)$}

\subsection{\label{sub:Preliminaries}Preliminaries}

Denote by $\mathcal{R}(M)$ the set of Riemannian metrics on $M$.
Suppose $g=\left\langle \cdot,\cdot\right\rangle \in\mathcal{R}(M)$.
The metric defines a \textbf{horizontal-vertical} splitting of $TT^{*}M$:
given $z=(q,p)\in T^{*}M$
\[
T_{z}T^{*}M=T_{z}^{h}T^{*}M\oplus T_{z}^{v}T^{*}M\cong T_{q}M\oplus T_{q}^{*}M;
\]
here $T_{z}^{h}T^{*}M=\ker(\kappa_{g}:T_{z}T^{*}M\rightarrow T_{q}^{*}M)$,
where $\kappa_{g}$ is the connection map of the Levi-Civita connection
$\nabla$ of $g$, and $T_{z}^{v}T^{*}M=\ker(d\pi(z):T_{z}T^{*}M\rightarrow T_{q}M)$.
Given $\xi\in TT^{*}M$ we denote by $\xi^{h}$ and $\xi^{v}$ the
horizontal and vertical components. Technically speaking $\xi^{h}\in TM$
and $\xi^{v}\in T^{*}M$, although we consistently use the {}``musical''
isomorphism $v\mapsto\left\langle v,\cdot\right\rangle $ to identify
$TM$ with $T^{*}M$. The horizontal-vertical splitting also determines
an almost complex structure $J_{g}$ called the \textbf{metric almost
complex structure }via
\[
J_{g}=\left(\begin{array}{cc}
 & -\mathbb{1}\\
\mathbb{1}
\end{array}\right).
\]
Recall that an almost complex structure $J$ on $T^{*}M$ is $d\lambda$\textbf{-compatible}
if the bilinear form $G_{J}(\cdot,\cdot):=d\lambda(J\cdot,\cdot)$
defines a Riemannian metric on $T^{*}M$. The metric almost complex
structure $J_{g}$ is compatible for every Riemannian metric $g$
on $M$, and we abbreviate $G_{g}:=G_{J_{g}}$. We denote the set
of all $d\lambda$-compatible almost complex structures by $\mathcal{J}(T^{*}M)$ and equip it with the $C_{\textrm{loc}}^{\infty}$-topology.\newline 

Denote by $\Lambda_{\tau}T^{*}M:=C^{\infty}(\mathbb{S}_{\tau},T^{*}M)$.
Given $x=(q,p)\in\Lambda_{\tau}T^{*}M$ and $r\geq1$ we define 
\[
\left\Vert p\right\Vert _{L_{g}^{r}(\mathbb{S}_{\tau})}:=\left(\int_{0}^{\tau}\left|p\right|^{r}dt\right)^{1/r}\ \ \ \mbox{for }1\leq r<\infty,
\]
and
\[
\left\Vert p\right\Vert _{L_{g}^{\infty}(\mathbb{S}_{\tau})}:=\sup_{t\in\mathbb{S}_{\tau}}\left|p(t)\right|.
\]
Similarly given $\xi\in T_{x}\Lambda_{\tau}T^{*}M$ and $J\in\mathcal{J}(T^{*}M)$
we define 
\[
\left\Vert \xi\right\Vert _{L_{G_{J}}^{r}(\mathbb{S}_{\tau})}:=\left(\int_{0}^{\tau}\left[G_{J}(\xi,\xi)\right]^{r/2}dt\right)^{1/r}.
\]
Given $X\in\Gamma(\mbox{End}(TM))$ we define 
\[
\left\Vert X\right\Vert _{L_{g}^{\infty}}:=\sup_{q\in M}\sup\left\{ \left|X(q)v\right|\,:\, v\in T_{q}M,\ \left|v\right|=1\right\} .
\]
Let us now fix a closed 2-form $\sigma\in\Omega^{2}(M)$, and consider
the symplectic form $\omega_{\sigma}=d\lambda+\pi^{*}\sigma$ from
the Introduction. We denote by $\mathcal{J}_{\sigma}$ the open set
of almost complex structures $J$ on $T^{*}M$ that are \textbf{tamed
}by $\omega_{\sigma}$ - this just means that the bilinear form $\omega_{\sigma}(J\cdot,\cdot)$
is positive definite. We say that $J\in\mathcal{J}_{\sigma}$ is \textbf{uniformly
tame }if $J$ is also $d\lambda$-compatible (i.e. $J\in\mathcal{J}_{\sigma}\cap\mathcal{J}(T^{*}M)$),
and there exists some positive constant $\varepsilon>0$ such that
\[
\omega_{\sigma}(J\xi,\xi)\geq\varepsilon G_{J}(\xi,\xi)\ \ \ \mbox{for all }\xi\in TT^{*}M.
\]
The pair $(\sigma,g)$ defines a bundle endomorphism $Y=Y_{\sigma,g}\in\Gamma(\mbox{End}(TM))$
called the\textbf{ Lorentz force} of $\sigma$ via:
\[
\sigma_{q}(u,v)=\left\langle Y(q)u,v\right\rangle .
\]

The following lemma will be very useful.
\begin{lem}
\label{lem:uniformly tame-1}\textbf{\emph{(Uniformly tame almost
complex structures)}}
\begin{enumerate}
\item Fix $g\in\mathcal{R}(M)$. If 
\begin{equation}
\left\Vert Y_{\sigma,g}\right\Vert _{L_{g}^{\infty}}\leq1\label{eq:nice}
\end{equation}
 then the almost complex structure $J_{g}$ is uniformly tame (with
$\varepsilon=1/2$).
\item Denote by $\mathcal{R}_{\sigma}(M)\subseteq\mathcal{R}(M)$ the set
of Riemannian metrics on $M$ for which \eqref{eq:nice} holds. Given
any $g_{0}\in\mathcal{R}(M)$, if $\upsilon>\left\Vert Y_{\sigma,g}\right\Vert _{L_{g}^{\infty}}$
then the rescaled metric $g:=\upsilon g_{0}$ lies in $\mathcal{R}_{\sigma}(M)$.
\item Given $g\in\mathcal{R}(M)$ let 
\[
\mathcal{U}_{g}:=\left\{ J\in\mathcal{J}(T^{*}M)\,:\,\left\Vert J-J_{g}\right\Vert _{L_{G_{g}}^{\infty}}\leq1/7\right\} .
\]
Then if $g\in\mathcal{R}_{\sigma}(M)$ and $J\in\mathcal{U}_{g}$
then $J$ is uniformly tame (with $\varepsilon=1/4$): \textup{
\[
\omega_{\sigma}(J\xi,\xi)\geq\frac{1}{4}G_{J}(\xi,\xi)\ \ \ \mbox{for all }\xi\in TT^{*}M.
\]
}
\end{enumerate}
\end{lem}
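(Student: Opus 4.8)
The plan is to expand $\omega_\sigma=d\lambda+\pi^*\sigma$ in the horizontal-vertical splitting $T_zT^*M\cong T_qM\oplus T_q^*M$ determined by $g$. Writing $\xi=(\xi^h,\xi^v)$ and identifying $\xi^v$ with a tangent vector via the metric, recall that $J_g\xi=(-\xi^v,\xi^h)$, that $G_g(\xi,\xi)=|\xi^h|^2+|\xi^v|^2$, that $d\pi(z)\xi=\xi^h$, and that $G_g(\cdot,\cdot)=d\lambda(J_g\cdot,\cdot)$; in particular $J_g$ is a $G_g$-isometry and $d\lambda(\cdot,\cdot)=-G_g(J_g\,\cdot,\cdot)$. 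For any almost complex structure $J$ this gives the decomposition
\[
\omega_\sigma(J\xi,\xi)=d\lambda(J\xi,\xi)+\langle Y\,d\pi(J\xi),\,\xi^h\rangle ,
\]
so everything reduces to estimating the magnetic term against the metric term. For (1), take $J=J_g$: then $d\lambda(J_g\xi,\xi)=G_g(\xi,\xi)$ and $d\pi(J_g\xi)=-\xi^v$, so by Cauchy--Schwarz and $\|Y\|_{L_g^\infty}\le1$,
\[
\omega_\sigma(J_g\xi,\xi)=|\xi^h|^2+|\xi^v|^2-\langle Y\xi^v,\xi^h\rangle\ \ge\ |\xi^h|^2+|\xi^v|^2-|\xi^v|\,|\xi^h|\ \ge\ \tfrac12 G_g(\xi,\xi),
\]
the last step being the arithmetic-geometric mean inequality $|\xi^v||\xi^h|\le\tfrac12(|\xi^h|^2+|\xi^v|^2)$. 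This is the assertion with $\varepsilon=1/2$.

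For (2), I would record how the Lorentz force transforms under a constant rescaling. From $\sigma(u,v)=\langle Y_{\sigma,g_0}u,v\rangle_{g_0}=\langle Y_{\sigma,\upsilon g_0}u,v\rangle_{\upsilon g_0}=\upsilon\langle Y_{\sigma,\upsilon g_0}u,v\rangle_{g_0}$ one gets $Y_{\sigma,\upsilon g_0}=\upsilon^{-1}Y_{\sigma,g_0}$, while the pointwise operator norm of a fixed endomorphism of $TM$ is unaffected when the metric is rescaled by a constant; hence $\|Y_{\sigma,\upsilon g_0}\|_{L_{\upsilon g_0}^\infty}=\upsilon^{-1}\|Y_{\sigma,g_0}\|_{L_{g_0}^\infty}$, which is $\le1$ once $\upsilon$ is large enough. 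Then $g=\upsilon g_0$ satisfies $\|Y_{\sigma,g}\|_{L_g^\infty}\le1$, that is, $g\in\mathcal R_\sigma(M)$.

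For (3), write $J=J_g+A$ with $\|A\|_{L_{G_g}^\infty}\le1/7$. As $J\in\mathcal J(T^*M)$, the form $G_J(\cdot,\cdot)=d\lambda(J\cdot,\cdot)$ is a genuine metric, and I would peel off the part to be retained:
\[
\omega_\sigma(J\xi,\xi)-\tfrac14 G_J(\xi,\xi)=\tfrac34 G_J(\xi,\xi)+\langle Y\,d\pi(J\xi),\,\xi^h\rangle .
\]
Using $d\pi(J\xi)=d\pi(J_g\xi)+d\pi(A\xi)=-\xi^v+(A\xi)^h$, the magnetic term splits into the term already estimated in (1), which is $\ge-\tfrac12 G_g(\xi,\xi)$, plus $\langle Y(A\xi)^h,\xi^h\rangle$, whose modulus is $\le\|Y\|_{L_g^\infty}\,|(A\xi)^h|\,|\xi^h|\le\|A\|_{L_{G_g}^\infty}G_g(\xi,\xi)\le\tfrac17 G_g(\xi,\xi)$ (here $\|Y\|_{L_g^\infty}\le1$ because $g\in\mathcal R_\sigma(M)$, and $|(A\xi)^h|\le|A\xi|_{G_g}$, $|\xi^h|\le|\xi|_{G_g}$); thus the magnetic term is $\ge-\tfrac{9}{14}G_g(\xi,\xi)$. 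Finally, $G_J(\xi,\xi)-G_g(\xi,\xi)=d\lambda(A\xi,\xi)=-G_g(J_gA\xi,\xi)$ has modulus $\le|A\xi|_{G_g}|\xi|_{G_g}\le\tfrac17 G_g(\xi,\xi)$ (using that $J_g$ is a $G_g$-isometry), so $G_g(\xi,\xi)\le\tfrac76 G_J(\xi,\xi)$. Combining the two estimates,
\[
\omega_\sigma(J\xi,\xi)-\tfrac14 G_J(\xi,\xi)\ \ge\ \tfrac34 G_J(\xi,\xi)-\tfrac{9}{14}\cdot\tfrac76\,G_J(\xi,\xi)=0 .
\]

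The one delicate point is the bookkeeping of constants in (3): the value $1/7$ in the definition of $\mathcal U_g$ is tuned exactly so that $\big(\tfrac12+\tfrac17\big)\cdot\tfrac76=\tfrac34$, i.e.\ so that all the available slack is consumed and one lands precisely on $\varepsilon=1/4$. Parts (1) and (2) are routine once the splitting is in place; the only ingredients beyond linear algebra are Cauchy--Schwarz, the arithmetic-geometric mean inequality, and the conformal scaling of $Y$.
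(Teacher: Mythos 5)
Your proof is correct and follows essentially the same route as the paper's: expand $\omega_\sigma(J\xi,\xi)$ via the horizontal--vertical splitting, bound the magnetic term $\langle Y\,d\pi(J\xi),\xi^h\rangle$ with Cauchy--Schwarz and the arithmetic--geometric mean inequality, and in (3) split $J=J_g+(J-J_g)$ and compare $G_J$ with $G_g$ using $\left\Vert J-J_{g}\right\Vert _{L_{G_{g}}^{\infty}}\leq1/7$. The only cosmetic difference is that in (3) you convert the final estimate into $G_J$ units via $G_g\leq\tfrac{7}{6}G_J$, whereas the paper works throughout in $G_g$ units; the arithmetic is identical.
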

\begin{proof}
(1). Write $Y=Y_{\sigma,g}$ and let $\xi\in TT^{*}M$. Then
\begin{align*}
\omega_{\sigma}(J_{g}\xi,\xi)-\frac{1}{2}G_{g}(\xi,\xi) & =\frac{1}{2}G_{g}(\xi,\xi)+\pi^{*}\sigma(J_{g}\xi,\xi)\\
 & =\frac{1}{2}G_{g}(\xi,\xi)+\left\langle Y(J_{g}\xi)^{h},\xi^{h}\right\rangle \\
 & =\frac{1}{2}G_{g}(\xi,\xi)-\left\langle Y\xi^{v},\xi^{h}\right\rangle ,\\
 & \geq\frac{1}{2}\left(\left|\xi^{h}\right|^{2}+\left|\xi^{v}\right|^{2}\right)-\left|\xi^{v}\right|\left|\xi^{h}\right|\\
 & =\left(\frac{1}{\sqrt{2}}\left|\xi^{h}\right|-\frac{1}{\sqrt{2}}\left|\xi^{v}\right|\right)^{2}\geq0.
\end{align*}

(2). For any $X\in\Gamma(\mbox{End}(TM))$ one has 
\[
\left\Vert X\right\Vert _{L_{\upsilon g}^{\infty}}=\left\Vert X\right\Vert _{L_{g}^{\infty}},
\]
and since $Y_{\sigma,\upsilon g}=\frac{1}{\upsilon}Y_{\sigma,g}$ we see
that if $\upsilon\geq\left\Vert Y_{\sigma,g}\right\Vert _{L_{g}^{\infty}}$
then $\left\Vert Y_{\sigma,\upsilon g}\right\Vert _{L_{\upsilon g}^{\infty}}\leq1$. 
(3). First note that for any $J\in\mathcal{J}(T^{*}M)$ we have 
\begin{align*}
G_{J}(\xi,\xi) & =\omega_{0}(J\xi,\xi)\\
 & =\omega_{0}(J_{g}J\xi,J_{g}\xi)\\
 & =G_{g}(J\xi,J_{g}\xi)\\
 & =G_{g}(J_{g}\xi,J_{g}\xi)+G_{g}((J-J_{g})\xi,J_{g}\xi)\\
 & \geq\left(1-\left\Vert J-J_{g}\right\Vert _{L_{G_{g}}^{\infty}}\right)G_{g}(\xi,\xi).
\end{align*}
Thus if $g\in\mathcal{R}_{\sigma}(M)$ and $J\in\mathcal{J}(T^{*}M)$
satisfies $\left\Vert J-J_{g}\right\Vert _{L_{G_{g}}^{\infty}}\leq1/7$
then 
\begin{align*}
\omega_{\sigma}(J\xi,\xi)-\frac{1}{4}G_{J}(\xi,\xi) & =\frac{3}{4}G_{J}(\xi,\xi)+\pi^{*}\sigma(J\xi,\xi)\\
 & =\frac{3}{4}G_{J}(\xi,\xi)+\left\langle Y(J_{g}\xi)^{h},\xi^{h}\right\rangle +\left\langle Y((J-J_{g})\xi)^{h},\xi^{h}\right\rangle \\
 & \overset{(*)}{\geq}\frac{3}{4}G_{J}(\xi,\xi)-\frac{1}{2}G_{g}(\xi,\xi)-\left\Vert J-J_{g}\right\Vert _{L_{G_{g}}^{\infty}}G_{g}(\xi,\xi)\\
 & \geq\frac{1}{4}G_{g}(\xi,\xi)-\frac{3}{4}\left\Vert J-J_{g}\right\Vert _{L_{G_{g}}^{\infty}}G_{g}(\xi,\xi)-\left\Vert J-J_{g}\right\Vert _{L_{G_{g}}^{\infty}}G_{g}(\xi,\xi)\\
 & \geq\frac{1}{4}\left(1-7\left\Vert J-J_{g}\right\Vert _{L_{G_{g}}^{\infty}}\right)G_{g}(\xi,\xi)\geq0,
\end{align*}
where $(*)$ used the first part of the lemma.

\end{proof}
Assume that $\alpha\in[S^{1},M]$ is a $\sigma$-atoroidal class.
Fix a reference point $*\in M$, and fix a reference loop $q_{\alpha}\in\Lambda_{1}^{\alpha}M$
such that $q_{\alpha}(0)=*$. Given any $q\in\Lambda_{\tau}^{\alpha}M$,
we define 
\[
\mathcal{A}_{\sigma}(q):=\int_{[0,1]\times\mathbb{S}_{\tau}}w^{*}\sigma,
\]
where $w:[0,1]\times\mathbb{S}_{\tau}\rightarrow M$ is any smooth
map such that $w(0,t)=q_{\alpha}(t/\tau)$ and $w(1,t)=q(t)$. Since
$\alpha$ is $\sigma$-atoroidal, $\mathcal{A}_{\sigma}$ is well
defined (i.e. independent of the choice of $w$), and one sees immediately
that 
\begin{equation}
d\mathcal{A}_{\sigma}=a_{\sigma}\ \ \ \mbox{on }\Lambda_{\tau}^{\alpha}M.\label{eq:why suwhy est}
\end{equation}
Fix a point $\widetilde{*}\in\widetilde{M}$ that projects onto our
fixed reference point $*\in M$. We denote by $\widetilde{q}_{\alpha}:[0,1]\rightarrow\widetilde{M}$
the lift of $q_{\alpha}$ to $\widetilde{M}$ with $\widetilde{q}_{\alpha}(0)=\widetilde{*}$.
The following lemma is based on \cite[Lemma 2.4]{BaeFrauenfelder2010},
and explains the importance of the condition \textbf{($\boldsymbol{\sigma}$1)}.
\begin{lem}
\label{lem:BF-2}\textbf{\emph{(The quadratic isoperimetric inequality)}}

Assume $\sigma$ satisfies condition \textbf{\emph{($\boldsymbol{\sigma}$1)}}
and $\alpha\in[S^{1},M]$ is a $\sigma$-atoroidal class. There exists
a constant $C_{0}=C_{0}(\sigma,g)>0$ and a constant $C_{1}=C_{1}(\sigma,g,\alpha)>0$
such that for all $q\in\Lambda_{\tau}^{\alpha}M$ one has 
\[
\left|\mathcal{A}_{\sigma}(q)\right|\leq C_{0}\left(\int_{0}^{\tau}\left|\dot{q}(t)\right|dt\right)^{2}+C_{1}.
\]
\end{lem}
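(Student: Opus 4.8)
The plan is to lift everything to the universal cover $\widetilde{M}$, represent $\mathcal{A}_{\sigma}(q)$ as the integral of $\widetilde{\sigma}=d\theta$ over a conveniently chosen cylinder, push this to the boundary by Stokes, and estimate the resulting line integrals of $\theta$ using its linear growth; the only delicate point will be controlling how far the relevant lift of $q$ is forced to move away from $\widetilde{*}$. Fix $\widetilde{*}$ and $\widetilde{q}_{\alpha}$ as in the excerpt, write $\mathrm{pr}\colon\widetilde{M}\to M$ for the covering projection, and let $A\in\pi_{1}(M)\cong\mathrm{Deck}(\mathrm{pr})$ be defined by $\widetilde{q}_{\alpha}(1)=A\cdot\widetilde{*}$, so that the conjugacy class of $A$ is the one determined by $\alpha$. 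Given $q\in\Lambda_{\tau}^{\alpha}M$ set $\ell:=\int_{0}^{\tau}|\dot{q}(t)|\,dt$. Since $[q]=[q_{\alpha}]=\alpha$, any lift of $q$ has its associated deck transformation in the conjugacy class of $A$, so after translating by a deck transformation one obtains an $A$-\emph{equivariant} lift $\widetilde{q}\colon[0,\tau]\to\widetilde{M}$, i.e.\ $\widetilde{q}(\tau)=A\cdot\widetilde{q}(0)$; the $A$-equivariant lifts form a single orbit under the centraliser $Z(A)\le\pi_{1}(M)$. For such a lift put $z_{0}:=\widetilde{q}(0)$, let $\beta$ be a minimising geodesic in $\widetilde{M}$ from $\widetilde{*}$ to $z_{0}$, and construct $\widetilde{w}\colon[0,1]\times[0,\tau]\to\widetilde{M}$ with boundary values $\widetilde{q}_{\alpha}(\cdot/\tau)$ on $\{0\}\times[0,\tau]$, $\widetilde{q}$ on $\{1\}\times[0,\tau]$, $\beta$ on $[0,1]\times\{0\}$, and $A\beta$ on $[0,1]\times\{\tau\}$ — possible since $\widetilde{M}$ is simply connected and the prescribed values agree at the corners. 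Then $w:=\mathrm{pr}\circ\widetilde{w}$ descends to a cylinder $[0,1]\times\mathbb{S}_{\tau}\to M$ from $q_{\alpha}(\cdot/\tau)$ to $q$, and since $\mathrm{pr}^{*}\sigma=\widetilde{\sigma}=d\theta$, Stokes' theorem gives
\[
\mathcal{A}_{\sigma}(q)=\int_{[0,1]\times\mathbb{S}_{\tau}}w^{*}\sigma=\oint_{\partial([0,1]\times[0,\tau])}\widetilde{w}^{*}\theta=\int_{\widetilde{q}}\theta-\int_{\widetilde{q}_{\alpha}}\theta+\int_{\beta}\theta-\int_{A\beta}\theta .
\]

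The main step is the geometric claim that the $A$-equivariant lift can be chosen with $d(\widetilde{*},z_{0})\le a_{1}\ell+a_{2}$, where $a_{1}=a_{1}(g)$ and $a_{2}=a_{2}(g,\alpha)$. Granting this, the estimate closes at once: $A$ is an isometry of $\widetilde{M}$ and $\widetilde{q}_{\alpha}$ has a fixed length $L_{q_{\alpha}}$, so each of the four arcs above has length an affine function of $\ell$ and lies in a ball $B(\widetilde{*},R)$ with $R$ an affine function of $\ell$; by \textbf{(}$\boldsymbol{\sigma}$\textbf{1)} applied at the fixed point $\widetilde{*}$, $\sup_{B(\widetilde{*},R)}|\theta|\le\Theta_{\widetilde{*}}(R+1)$, again affine in $\ell$. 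Hence $|\mathcal{A}_{\sigma}(q)|$ is bounded by the sum over the four arcs of $(\mathrm{length})\cdot\sup_{B(\widetilde{*},R)}|\theta|$, a quadratic polynomial in $\ell$; its $\ell^{2}$-coefficient involves only $\Theta_{\widetilde{*}}$, the slope $a_{1}$ and the length of $\widetilde{q}$, hence is independent of $\alpha$, while the lower-order terms (absorbing linear ones via $c\ell\le\ell^{2}+c^{2}/4$) may depend on $\alpha$; this yields the bound $C_{0}\ell^{2}+C_{1}$ with $C_{0}=C_{0}(\sigma,g)$ and $C_{1}=C_{1}(\sigma,g,\alpha)$. (Under \textbf{(}$\boldsymbol{\sigma}$\textbf{0)} one has $\sup_{\widetilde{M}}|\theta|<\infty$ and the same computation even gives a bound linear in $\ell$.)

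It remains to prove the displacement bound, which I expect to be the real obstacle and which is where the geometry of $M$ enters. If $\alpha=0$ then $A=\mathrm{id}$, every lift of $q$ is a loop, and we take $z_{0}$ to be the lift of $q(0)$ nearest to $\widetilde{*}$, so $d(\widetilde{*},z_{0})\le\mathrm{diam}(M,g)$. If $\alpha\neq0$, lift $q$ starting at the lift $p_{0}$ of $q(0)$ nearest to $\widetilde{*}$; then $d(\widetilde{*},p_{0})\le\mathrm{diam}(M,g)$ and the lift ends at $B\cdot p_{0}$ with $B\in\pi_{1}(M)$ conjugate to $A$ and $d(p_{0},B\,p_{0})\le\ell$. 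Writing $B=gAg^{-1}$, the basepoints of the $A$-equivariant lifts are exactly $\{c\,g^{-1}p_{0}:c\in Z(A)\}$, so $\min_{z_{0}}d(\widetilde{*},z_{0})\le d\bigl(g^{-1}\widetilde{*},Z(A)\cdot\widetilde{*}\bigr)+\mathrm{diam}(M,g)$; and $d(\widetilde{*},B\widetilde{*})\le\ell+2\,\mathrm{diam}(M,g)$ together with $Ag^{-1}=g^{-1}B$ says exactly that the point $y:=g^{-1}\widetilde{*}$ of the orbit $\pi_{1}(M)\cdot\widetilde{*}$ satisfies $d(y,A\,y)\le\ell+2\,\mathrm{diam}(M,g)$. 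Thus everything reduces to the assertion: for the fixed deck transformation $A$, the sublevel set $\{x\in\widetilde{M}:d(x,A\,x)\le L\}$ of its displacement function meets the orbit $\pi_{1}(M)\cdot\widetilde{*}$ only within distance $\le a_{1}'L+a_{2}'$ of $Z(A)\cdot\widetilde{*}$, with $a_{1}'$ independent of $\alpha$. For a manifold carrying a non-positively curved metric, and for flat tori, this follows by working in the non-positively curved universal cover: there $A$ is a semisimple (axial) isometry, convexity of its displacement function places the $L$-sublevel set in an $O(L)$-neighbourhood of the minimal set $\mathrm{Min}(A)$ with $\alpha$-independent slope, $Z(A)$ acts cocompactly on $\mathrm{Min}(A)$ by the flat torus theorem, and a compactness argument using cocompactness of the $\pi_{1}(M)$-action on $\widetilde{M}$ shows that $\mathrm{Min}(A)$ lies within bounded distance of $Z(A)\cdot\widetilde{*}$; combining these gives the required bound and hence the lemma.
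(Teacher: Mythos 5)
Your overall strategy -- lift, fill with a cylinder, apply Stokes, and estimate the four boundary integrals of $\theta$ using its linear growth at the fixed basepoint $\widetilde{*}$ -- is exactly the paper's, and your bookkeeping (lengths and ball radii affine in $\ell$, hence a product quadratic in $\ell$, with the $\alpha$-dependence absorbed into $C_{1}$) is sound. The problem lies entirely in the step you yourself single out as ``the real obstacle''. The lemma assumes only that $M$ is a closed manifold, $g$ an arbitrary metric, and $\sigma$ satisfies \textbf{($\boldsymbol{\sigma}$1)}; it does not assume that $M$ carries a metric of non-positive curvature. Your proof of the displacement bound $d(\widetilde{*},z_{0})\leq a_{1}\ell+a_{2}$ -- via semisimplicity of $A$, convexity of the displacement function, the flat torus theorem, and cocompactness of $Z(A)$ on $\mathrm{Min}(A)$ -- is carried out only ``for a manifold carrying a non-positively curved metric, and for flat tori''. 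Those are the main examples of \textbf{($\boldsymbol{\sigma}$1)}, but they are not the hypothesis, and the final assertion you reduce to (that the $L$-sublevel set of the displacement function of $A$ meets the orbit $\pi_{1}(M)\cdot\widetilde{*}$ only within distance $O(L)$ of $Z(A)\cdot\widetilde{*}$, with slope independent of $\alpha$) is a nontrivial statement about the large-scale geometry of $\pi_{1}(M)$ that is left unproved in general. As written, the proposal therefore proves the lemma only for a subclass of the manifolds it is stated for; that is a genuine gap.

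For comparison, the paper's proof sidesteps the centraliser analysis entirely by a different normalisation of the filling cylinder: it picks $w$ so that the two seams $s\mapsto\widetilde{w}(s,0)$ and $s\mapsto\widetilde{w}(s,\tau)$ have lifted length at most $\mathrm{diam}(M,g)$, whence every boundary arc starts within distance $\max(\mathrm{diam}(M,g),\ell_{\alpha})$ of $\widetilde{*}$ and the four terms are estimated directly, with no group theory. Your coset description of the admissible seams (a coset of $Z(A)$) identifies precisely why such a normalisation needs justification: the homotopy class of the seam rel endpoints is constrained by the requirement that $w$ be a free homotopy from $q_{\alpha}$ to $q$, so one cannot simply replace the seam by a minimising geodesic in an arbitrary class. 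To repair your argument you should either justify a choice of seam of length affine in $\ell$ with $\alpha$-independent slope directly (which is what the paper's normalisation amounts to), or prove the displacement bound without curvature hypotheses; restricting to non-positively curved $M$ does not suffice for the statement as given.
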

\begin{proof}
Let $\theta$ denote a primitive of $\widetilde{\sigma}$ such that
for all $z\in\widetilde{M}$ there exists a constant $\Theta_{z}$
such that 
\[
\sup_{q\in B(z,r)}\left|\theta_{q}\right|\leq\Theta_{z}(r+1).
\]
Now let $q\in\Lambda_{\tau}^{\alpha}M$, and let $w:[0,1]\times\mathbb{S}_{\tau}\rightarrow M$
denote a smooth map such that $w(0,t)=q_{\alpha}(t/\tau)$ and $w(1,t)=q(t)$,
together with the additional property that if $\widetilde{w}:[0,1]\times[0,\tau]\rightarrow\widetilde{M}$
denotes the lifting of $w$ to the universal cover such that $\widetilde{w}(0,t)=\widetilde{q}_{\alpha}(t/\tau)$
then 
\[
\int_{0}^{1}\left|\partial_{s}\widetilde{w}(s,i)\right|ds\leq d,\ \ \ \mbox{for }i=0,1,
\]
where $d:=\mbox{diam}(M,g)$. Set
\[
\Theta:=\Theta_{\widetilde{*}},\ \ \ \ell_{\alpha}:=\int_{0}^{1}\left|\dot{q}_{\alpha}(t)\right|dt,\ \ \ \ell(q):=\int_{0}^{\tau}\left|\dot{q}(t)\right|dt.
\]
Then we have
\begin{align*}
\left|\mathcal{A}_{\sigma}(q)\right| & =\left|\int_{[0,1]\times\mathbb{S}_{\tau}}w^{*}\sigma\right|\\
 & =\left|\int_{[0,1]\times[0,\tau]}\widetilde{w}^{*}\widetilde{\sigma}\right|\\
 & \leq\left|\int_{0}^{1}\theta(\partial_{s}\widetilde{w}(s,0))ds\right|+\left|\int_{0}^{\tau}\theta(\partial_{t}\widetilde{w}(1,t))dt\right|+\left|\int_{0}^{1}\theta(\partial_{s}\widetilde{w}(s,1))ds\right|+\left|\int_{0}^{\tau}\theta(\partial_{t}\widetilde{w}(0,t))dt\right|\\
 & \leq\Theta(d+1)d+\Theta(d+\ell(q)+1)\ell(q)+\Theta(d+\ell_{\alpha}+1)d+\Theta(\ell_{\alpha}+1)\ell_{\alpha}.
\end{align*}
The desired statement follows with 
\[
C_{0}:=(2+d)\Theta;
\]
\[
C_{1}:=\Theta(d+1)d+\Theta(d+1)+\Theta(d+\ell_{\alpha}+1)d+\Theta(\ell_{\alpha}+1)\ell_{\alpha}.
\]

\end{proof}

\subsection{The action functional}

Throughout this section assume that $\sigma$ satisfies \textbf{($\boldsymbol{\sigma}$1)}
and $\alpha\in[S^{1},M]$ is a $\sigma$-atoroidal class. 

Fix a $\tau$-periodic Hamiltonian $H:\mathbb{S}_{\tau}\times T^{*}M\rightarrow\mathbb{R}$.
Denote by $\mathcal{P}_{\tau}^{\alpha}(H,\sigma)\subseteq\Lambda_{\tau}^{\alpha}T^{*}M$
the set of closed $\tau$-periodic orbits of $X_{H,\sigma}$ belonging
to $\Lambda_{\tau}^{\alpha}T^{*}M$:
\[
\mathcal{P}_{\tau}^{\alpha}(H,\sigma)=\left\{ x\in\Lambda_{\tau}^{\alpha}T^{*}M\,:\,\dot{x}=X_{H,\sigma}(t,x)\right\} 
\]
($\Lambda_{\tau}^{\alpha}T^{*}M$ denotes those $\tau$-periodic loops
$x$ whose projection to $M$ lies in $\Lambda_{\tau}^{\alpha}M$).
Denote by $\phi_{t}^{H,\sigma}:T^{*}M\rightarrow T^{*}M$ the flow
of $X_{H,\sigma}$. In order to construct the Floer complex associated
to $H,\sigma$ and $\alpha$ we need to make the following standard
assumption on the triple $(H,\sigma,\alpha)$: 
\begin{lyxlist}{00.00.0000}
\item [{\textbf{(N)}}] All the elements $x\in\mathcal{P}_{\tau}^{\alpha}(H,\sigma)$
are \textbf{non-degenerate}, that is, the linear map $d\phi_{\tau}^{H,\sigma}(x(0))\in\mathsf{Sp}(T_{x(0)}T^{*}M)$
does not have 1 as an eigenvalue. \end{lyxlist}
\begin{rem}
In fact, as far as Theorem A is concerned, the assumption that Condition
\textbf{(N) }is satisfied can be relaxed. Indeed, the point is that
by making a very small perturbation of $H$ along the 1-periodic orbits
we can create a new Hamiltonian $\widetilde{H}$ which still satisfies\textbf{
}all the other requirements of Theorem A, and also such that $\widetilde{H}$
satisfies Condition \textbf{(N)}. Then Theorem A tells us that the
Floer homology $HF_{*}^{\alpha}(\widetilde{H},\delta\sigma,\tau)$
is well defined, and moreover if $\widehat{H}$ is another such perturbation
then by Theorem \ref{thm:third statement} below we have $HF_{*}^{\alpha}(\widetilde{H},\sigma,\tau)\cong HF_{*}^{\alpha}(\widehat{H},\sigma,\tau)$.
In other words, we can still define $HF_{*}^{\alpha}(H,\sigma,\tau)$
even when Condition \textbf{(N) }is not satisfied, by simply setting
\[
HF_{*}^{\alpha}(H,\sigma,\tau)\overset{\textrm{def}}{=}HF_{*}^{\alpha}(\widetilde{H},\sigma,\tau)
\]
 for any such perturbation $\widetilde{H}$. 
\end{rem}
The action functional $\mathcal{A}_{H,\sigma}:\Lambda_{\tau}^{\alpha}T^{*}M\rightarrow\mathbb{R}$
that we will work with is defined by
\[
\mathcal{A}_{H,\sigma}(x):=\mathcal{A}_{H}(x)+\mathcal{A}_{\sigma}(\pi\circ x),
\]
where $\mathcal{A}_{H}$ denotes the \textbf{standard} \textbf{Hamiltonian
action functional}
\[
\mathcal{A}_{H}(x):=\int_{\mathbb{S}_{\tau}}\lambda^{*}x-\int_{0}^{\tau}H(t,x)dt.
\]
It is not hard to check that a loop $x\in\Lambda_{\tau}^{\alpha}T^{*}M$
is a critical point of $\mathcal{A}_{H,\sigma}$ if and only if $x\in\mathcal{P}_{\tau}^{\alpha}(H,\sigma)$.

In order to be able to obtain the necessary compactness results needed
to define the Floer homology, following Abbondandolo and Schwarz \cite[Section 1.5]{AbbondandoloSchwarz2006}
we impose two growth conditions on $H$. In the statement of the following
definition, $Z\in\mbox{Vect}(T^{*}M)$ denotes the \textbf{Liouville
vector field}, which is uniquely defined by the equation $i_{Z}d\lambda=\lambda$. 
\begin{defn}
\label{AS growth conditions}A Hamiltonian $H\in C^{\infty}(\mathbb{S}_{\tau}\times T^{*}M,\mathbb{R})$
satisfies the \textbf{Abbondandolo-Schwarz growth conditions }if the
following two requirements hold:
\begin{lyxlist}{00.00.0000}
\item [{\textbf{(H1)}}] There exists $h_{1}>0$ and $k_{1}\geq0$ such
that 
\[
dH(t,q,p)Z(q,p)-H(t,q,p)\geq h_{1}\left|p\right|^{2}-k_{1}\ \ \ \mbox{for all }(t,q,p)\in\mathbb{S}_{\tau}\times T^{*}M.
\]

\item [{\textbf{(H2)}}] There exists $h_{2}>0$ and $k_{2}\geq0$ such
that 
\[
\left|\nabla_{q}H(t,q,p)\right|\leq h_{2}\left|p\right|^{2}+k_{2}\ \ \ \mbox{for all }(t,q,p)\in\mathbb{S}_{\tau}\times T^{*}M,
\]
\begin{equation}
\left|\nabla_{p}H(t,q,p)\right|^{2}\leq h_{2}\left|p\right|^{2}+k_{2}\ \ \ \mbox{for all }(t,q,p)\in\mathbb{S}_{\tau}\times T^{*}M.\label{eq:p part of H2}
\end{equation}

\end{lyxlist}

Here we have chosen a Riemannian metric $g$ on $M$, and $\nabla_{q}H$
and $\nabla_{p}H$ denote the horizontal and vertical components of
the gradient $\nabla H$ of $H$ under the splitting $TT^{*}M\cong TM\oplus T^{*}M$
induced by the Riemannian metric (see Section \ref{sub:The-Lagrangian-action}
for the precise definition). Whilst the constants $h_{i},k_{i}$ depend
on the choice of metric $g$ on $M$, the existence of such constants
does not (see \cite[p273]{AbbondandoloSchwarz2006}). 

\end{defn}
We now define a constant $\delta_{0}(H,\sigma)$ associated to a pair
$(H,\sigma)$, where $H$ satisfies the Abbondandolo-Schwarz growth
conditions, and $\sigma$ satisfies \textbf{($\boldsymbol{\sigma}$1)}.
This is the constant that appears in the statement of Theorem A.
\begin{defn}
Firstly, given a Riemannian metric $g\in\mathcal{R}(M)$, define 
\[
\eta_{1}(H,g):=\sup\left\{ h_{1}>0\,:\, H\ \mbox{satisfies \textbf{(H1) }with respect to }h_{1}\mbox{ and some }k_{1}\geq0\right\} ;
\]
\[
\eta_{2}(H,g):=\inf\left\{ h_{2}>0\,:\, H\mbox{ satisfies }\eqref{eq:p part of H2}\mbox{ with respect to }h_{2}\mbox{ and some }k_{2}\geq0\right\} .
\]
The reason that $\eta_{2}(H,g)$ is the infimum over the constants
$h_{2}$ for which \eqref{eq:p part of H2} is satisfied (rather than
over the constants $h_{2}$\textbf{ }for which \textbf{(H2) }is satisfied)
is that this part of the argument - specifically, Lemma \ref{lem:claim 2}
- does not require any assumptions on the growth of $\nabla_{q}H$.
This assumption comes into play later on, cf. Section \ref{sec:Proofs}.
Note that if $H$ satisfies both \textbf{(H1) }and \textbf{(H2) }then
$0<\eta_{1}(H,g),\eta_2(H,g)<\infty$. 

Now set
\begin{equation}
\delta_{0}(H,\sigma,g):=\begin{cases}
\frac{\eta_{1}(H,g)}{2C_{0}(\sigma,g)\eta_{2}(H,g)}, & \mbox{if }\sigma\mbox{ satisfies \textbf{(\ensuremath{\boldsymbol{\sigma}}1)} but not \textbf{(\ensuremath{\boldsymbol{\sigma}}0)},}\\
\infty, & \mbox{if }\sigma\mbox{ satisfies \textbf{(\ensuremath{\boldsymbol{\sigma}}0)}},
\end{cases}\label{eq:delta0}
\end{equation}
where the constant $C_0(\sigma,g)$ was defined in Lemma \ref{lem:BF-2}.
Finally set 
\[
\delta_{0}(H,\sigma):=\sup_{g\in\mathcal{R}(M)}\delta_{0}(H,\sigma,g)\in(0,\infty].
\]
\end{defn}
\begin{rem}
\label{can assume g is in RsigmaM}Observe that 
\[
\delta_{0}(H,\sigma,\upsilon g)=\delta_{0}(H,\sigma,g)\ \ \ \mbox{for all }\upsilon>0.
\]
Thus we can alternatively define
\[
\delta_{0}(H,\sigma):=\sup_{g\in\mathcal{R}_{\sigma}(M)}\delta_{0}(H,\sigma,g)\in(0,\infty],
\]
where $\mathcal{R}_{\sigma}(M)$ was defined in Lemma \ref{lem:uniformly tame-1}.2.
\end{rem}

\subsection{The Floer equation}

Fix a Riemannian metric $g\in\mathcal{R}_{\sigma}(M)$, and a Hamiltonian
$H\in C^{\infty}(\mathbb{S}_{\tau}\times T^{*}M,\mathbb{\mathbb{R}})$
satisfying the Abbondandolo-Schwarz growth conditions \textbf{(H1)
}and \textbf{(H2)}. Condition \textbf{(H2) }implies that there exists
a constant $h_{\sigma,g}\geq0$ such that 
\begin{equation}
\left|X_{H,\sigma}(t,q,p)\right|\leq h_{\sigma,g}\left(1+\left|p\right|^{2}\right)\ \ \ \mbox{for all }(t,q,p)\in\mathbb{S}_{\tau}\times T^{*}M.\label{eq:the condition on X}
\end{equation}
Observe that

\begin{equation}
\lambda(X_{H,\sigma})=dH(Z)\label{eq:dh z}
\end{equation}
(recall $Z$ denotes the Liouville vector field); in particular $\lambda(X_{H,\sigma})$ does \textbf{not }depend on
$\sigma$. Indeed, 
\[
\lambda(X_{H,\sigma})+\pi^{*}\sigma(Z,X_{H,\sigma})=\omega_{\sigma}(Z,X_{H,\sigma})=dH(Z),
\]
and 
\[
\pi^{*}\sigma(Z,X_{H,\sigma})=0
\]
as $d\pi(Z)\equiv0$.

Fix a $\sigma$-atoroidal class $\alpha\in[S^{1},M]$ and $\delta\in\mathbb{R}$.
Thus the action functional $\mathcal{A}_{H,\delta\sigma}:\Lambda_{\tau}^{\alpha}T^{*}M\rightarrow\mathbb{R}$
is defined. Given a family $\mathbf{J}=(J_{t})_{t\in\mathbb{S}_{\tau}}\subseteq\mathcal{J}_{\sigma}$,
denote by $\nabla_{\mathbf{J}}\mathcal{A}_{H,\delta\sigma}$ the vector
field on $\Lambda_{\tau}^{\alpha}T^{*}M$ defined by 
\[
\nabla_{\mathbf{J}}\mathcal{A}_{H,\delta\sigma}(x)=J_{t}(x)(\dot{x}-X_{H,\delta\sigma}(t,x)).
\]
With these definitions one has
\begin{equation}
d\mathcal{A}_{H,\sigma}(x)(\xi)=\left\langle \left\langle \nabla_{\mathbf{J}}\mathcal{A}_{H,\sigma}(x),\xi\right\rangle \right\rangle _{L_{G_{\mathbf{J}}}^{2}(\mathbb{S}_{\tau})},\label{eq:the gradient}
\end{equation}
where $\left\langle \left\langle \cdot,\cdot\right\rangle \right\rangle _{L_{G_{\mathbf{J}}}^{2}(\mathbb{S}_{\tau})}$
denotes the possibly non-symmetric inner product given by 
\[
\left\langle \left\langle \xi,\zeta\right\rangle \right\rangle _{L_{G_{\mathbf{J}}}^{2}(\mathbb{S}_{\tau})}:=\int_{0}^{\tau}\omega_{\sigma}(J_{t}\xi,\zeta)dt.
\]
 We remind the reader that since the almost complex structures $\mathbf{J}=(J_{t})_{t\in\mathbb{S}_{\tau}}$
are only assumed to be tamed by $\omega_{\sigma}$ (rather than compatible),
the order in \eqref{eq:the gradient} is important, that is, in general
\[
d\mathcal{A}_{H,\sigma}(x)(\xi)\ne\left\langle \left\langle \xi,\nabla_{\mathbf{J}}\mathcal{A}_{H,\sigma}(x)\right\rangle \right\rangle _{L_{G_{\mathbf{J}}}^{2}(\mathbb{S}_{\tau})}.
\]

Given critical points $x_{-},x_{+}\in\mathcal{P}_{\tau}^{\alpha}(H,\delta\sigma)$
we denote by 
\[
\mathcal{M}_{\tau}^{\alpha}(x_{-},x_{+},H,\delta\sigma,\mathbf{J})\subseteq C^{\infty}(\mathbb{R}\times\mathbb{S}_{\tau},T^{*}M)
\]
the set of smooth maps $u:\mathbb{R}\times\mathbb{S}_{\tau}\rightarrow T^{*}M$
that satisfy the \textbf{Floer equation}
\begin{equation}
\partial_{s}u+\nabla_{\mathbf{J}}\mathcal{A}_{H,\delta\sigma}(u)=0\label{eq:floer}
\end{equation}
and submit to the asymptotic conditions
\begin{equation}
\lim_{s\rightarrow\pm\infty}u(s,t)=x_{\pm}(t),\ \ \ \lim_{s\rightarrow\pm\infty}\partial_{s}u(s,t)=0,\label{eq:asymptotes}
\end{equation}
both limits being uniform in $t$. 

More generally, we denote by $\mathcal{M}_{\tau}^{\alpha}(a,b,H,\delta\sigma,\mathbf{J})$
the set of maps $u\in C^{\infty}(\mathbb{R}\times\mathbb{S}_{\tau},T^{*}M)$
satisfying \eqref{eq:floer} and 
\[
a\leq\mathcal{A}_{H,\delta\sigma}(u(s,t))\leq b\ \ \ \mbox{for all }(s,t)\in\mathbb{R}\times\mathbb{S}_{\tau}.
\]

Recall the definition of the the set $\mathcal{U}_{g}$ of almost
complex structures from Lemma \ref{lem:uniformly tame-1}.3. The following
theorem is central to defining the Floer homology $HF_{*}^{\alpha}(H,\tau,\sigma)$,
and will be proved in Section \ref{sec:Proofs} below.
\begin{thm}
\label{thm:AS1}\textbf{\emph{($L^{\infty}$ bounds on gradient flow
lines)}}

Suppose $g\in\mathcal{R}_{\sigma}(M)$, $\tau\left|\delta\right|<\delta_{0}(H,\sigma,g)$
and $\alpha\in[S^{1},M]$ is a $\sigma$-atoroidal class. There exists
a smaller neighborhood $\mathcal{V}_{g}\subseteq\mathcal{U}_{g}$
of $J_{g}$ such that for any family $\mathbf{J}=(J_{t})_{t\in\mathbb{S}_{\tau}}\subseteq\mathcal{V}_{g}$,
and for all $-\infty<a\leq b<\infty$, there exists a compact set
$K=K(a,b,\mathbf{J})\subseteq T^{*}M$ such that for any $u\in\mathcal{M}_{\tau}^{\alpha}(a,b,H,\delta\sigma,\mathbf{J})$
one has 
\[
u(\mathbb{R}\times\mathbb{S}_{\tau})\subseteq K.
\]

\end{thm}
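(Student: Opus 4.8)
The plan is to follow the Abbondandolo--Schwarz strategy for the untwisted case, using the quadratic isoperimetric inequality (Lemma \ref{lem:BF-2}) to absorb the extra terms coming from $\sigma$. The starting point is the observation that for $u\in\mathcal{M}_{\tau}^{\alpha}(a,b,H,\delta\sigma,\mathbf{J})$ the energy $E(u)=\int_{\mathbb{R}\times\mathbb{S}_{\tau}}\left|\partial_s u\right|^2$ is bounded by $b-a$, since along a Floer cylinder $\tfrac{d}{ds}\mathcal{A}_{H,\delta\sigma}(u(s,\cdot))=-\int_0^\tau G_{\mathbf{J}}(\partial_s u,\partial_s u)\,dt\le -\varepsilon^{-1}\cdot(\textrm{something})$; here the uniform tameness from Lemma \ref{lem:uniformly tame-1}.3, which holds on $\mathcal{U}_g\supseteq\mathcal{V}_g$, is what makes the energy control the action difference. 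Then one wants an $L^\infty$ bound on the fiber component $p$ of $u=(q,p)$. First I would establish an $L^2$ bound on $p$ over any unit square $[\sigma_0,\sigma_0+1]\times\mathbb{S}_\tau$: combining \textbf{(H1)} with $\lambda(X_{H,\delta\sigma})=dH(Z)$ (equation \eqref{eq:dh z}, crucially independent of $\sigma$) gives $\mathcal{A}_{H,\delta\sigma}(u(s,\cdot))\ge h_1\|p(s,\cdot)\|_{L^2}^2 - k_1\tau - \mathcal{A}_{\delta\sigma}(\pi\circ u(s,\cdot))$, and the last term is estimated using Lemma \ref{lem:BF-2} by $|\delta|C_0\,\ell(u(s,\cdot))^2 + |\delta|C_1$ where $\ell$ is the length of the projected loop. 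The length in turn is controlled by $\|p(s,\cdot)\|_{L^2}$ up to the contribution of $\partial_s u$, using $|\dot q|\le|\nabla_p H|+|\partial_s u^h|$-type estimates together with \eqref{eq:p part of H2}.

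The heart of the matter is that this produces an inequality of the shape $h_1\|p\|_{L^2}^2 \le (b - a + k_1\tau + |\delta|C_1) + |\delta|C_0\big(c\|p\|_{L^2} + c'\|\partial_s u\|_{L^2}\big)^2$, and for this to close up into a genuine a priori bound on $\|p\|_{L^2}$ one needs the quadratic-in-$\|p\|_{L^2}$ term on the right to have a strictly smaller coefficient than $h_1$ on the left — this is exactly where the hypothesis $\tau|\delta| < \delta_0(H,\sigma,g) = \eta_1/(2C_0\eta_2)$ enters, via $\eta_1\lesssim h_1$ and the length estimate bringing in a factor $\eta_2$. After some care with the mixed term $\|\partial_s u\|_{L^2}$ (which is small in an $L^2(ds)$ sense because of the global energy bound, so one argues on a slightly larger square and uses the mean value trick to find a good slice $s$) one obtains $\sup_s\|p(s,\cdot)\|_{L^2(\mathbb{S}_\tau)}\le R_0$ for a constant $R_0=R_0(a,b,\mathbf{J})$.

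From the uniform $L^2$ bound on each time-slice $p(s,\cdot)$ and the finite energy, the passage to an $L^\infty$ bound is the standard elliptic bootstrap of Abbondandolo--Schwarz: one shows $u$ is uniformly continuous and then uses a Heinz-type / mean value inequality argument on small balls where the nonlinear term is controlled, together with the growth bound \eqref{eq:the condition on X} on $X_{H,\delta\sigma}$ and interior elliptic estimates for the (perturbed) Cauchy--Riemann operator, to upgrade the local $L^2$ bounds to a $C^0$ bound. Concretely one argues by contradiction: if $|p(s_k,t_k)|\to\infty$ one rescales around $(s_k,t_k)$, extracts in the limit a nonconstant finite-energy $J$-holomorphic plane or sphere in $T^*M$ (or a half-plane, handled by removal of singularities); but $\omega_{\delta\sigma}$ is symplectically aspherical because $\widetilde{\sigma}$ is exact, and the exactness of $d\lambda$ on $T^*M$ together with the finite energy rules out planes, a contradiction. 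This yields the compact set $K$.

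The main obstacle, and the place I would spend the most care, is the bookkeeping in the first part: getting the length $\ell(u(s,\cdot))$ of the \emph{projected} loop estimated by $\|p(s,\cdot)\|_{L^2}$ and $\|\partial_s u(s,\cdot)\|_{L^2}$ with the right constants, so that after inserting everything into Lemma \ref{lem:BF-2} the coefficient comparison $|\delta|\tau\cdot(\textrm{stuff})<h_1$ reduces \emph{exactly} to $\tau|\delta|<\eta_1/(2C_0\eta_2)$, and controlling the cross term involving $\partial_s u$ uniformly in $s$ rather than merely in an averaged sense. A secondary technical point is that $\mathbf{J}$ is only $\omega_{\delta\sigma}$-tame, not compatible, so one must keep track of the asymmetry in $\langle\langle\cdot,\cdot\rangle\rangle$ and work with $G_{\mathbf{J}}$ (which is genuine) rather than the taming form when measuring gradients; shrinking $\mathcal{U}_g$ to $\mathcal{V}_g$ gives the room needed for the constants in the elliptic estimates to be uniform over the family $\mathbf{J}$.
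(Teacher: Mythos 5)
Your treatment of the first stage is essentially the paper's proof: the energy identity via the uniform tameness of Lemma \ref{lem:uniformly tame-1}.3 (the paper's Lemma \ref{lem:claim 1}), and then the slice-wise $L^2$ bound on $p$ obtained by combining \textbf{(H1)} with \eqref{eq:dh z}, estimating $\ell(q(s,\cdot))$ through \eqref{eq:p part of H2} and $\left|\partial_s u\right|$, and feeding this into the quadratic isoperimetric inequality of Lemma \ref{lem:BF-2} so that the surviving coefficient of $\left\Vert p(s,\cdot)\right\Vert_{L^2}^2$ is $h_1-2\left|\delta\right|\tau C_0 h_2-\mu>0$ — which is exactly where $\tau\left|\delta\right|<\eta_1/(2C_0\eta_2)$ enters (the paper's Lemma \ref{lem:claim 2}). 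This is precisely the new content of the theorem relative to Abbondandolo--Schwarz, and you have it right, including the tame-versus-compatible bookkeeping.

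One caveat on your second stage. Your first sentence there (defer to the Abbondandolo--Schwarz elliptic bootstrap, i.e.\ their Theorem 1.14: Nash embedding, Calder\'on--Zygmund estimates for the Cauchy--Riemann operator, interpolation) is what the paper does, and it is the reason $\mathcal{V}_g$ must be shrunk inside $\mathcal{U}_g$. But the "concrete" contradiction argument you then sketch — rescale near points where $\left|p\right|\to\infty$, extract a holomorphic plane or sphere, and invoke symplectic asphericity — is not a valid substitute. Bubbling analysis addresses gradient blow-up of $u$, which is a different compactness failure from the one at issue here: $\left|p\right|$ can escape to infinity in the fibres of $T^{*}M$ while $\left|\partial_s u\right|$ stays bounded, so no rescaling limit in $T^{*}M$ is available and asphericity has nothing to say. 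Asphericity (and the real-valuedness of the action on atoroidal classes) handles obstructions (1) and (2) listed in the Introduction; the fibre-noncompactness is obstruction (3), and the only known route from the $W^{1,2}$ bound \eqref{eq:first stage} to the $L^{\infty}$ bound is the genuine elliptic estimate. So keep the citation to Abbondandolo--Schwarz for that step and drop the blow-up argument.
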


\subsection{Defining the Floer homology groups}

Let us now fix:
\begin{itemize}
\item a closed 2-form $\sigma\in\Omega^{2}(M)$ that satisfies Condition
\textbf{($\boldsymbol{\sigma}$1)}, 
\item a $\sigma$-atoroidal class $\alpha\in[S^{1},M]$,
\item a Riemannian metric $g\in\mathcal{R}_{\sigma}(M)$,
\item a Hamiltonian $H\in C^{\infty}(\mathbb{S}_{\tau}\times T^{*}M,\mathbb{R})$
that satisfies the Abbondandolo-Schwarz growth conditions \textbf{(H1)
}and \textbf{(H2)},
\item a constant $\delta\in\mathbb{R}$ such that $\tau\left|\delta\right|<\delta_{0}(H,\sigma,g)$,
and such that $(H,\delta\sigma,\alpha)$ satisfies Condition \textbf{(N)}. 
\end{itemize}
We will now explain how Theorem \ref{thm:AS1} allows us to define
the Floer homology groups $HF_{*}^{\alpha}(H,\delta\sigma,\tau)$.
All of this material is now standard (and essentially identical to
\cite[Section 1.7]{AbbondandoloSchwarz2006}), and we refer the reader
to any of a number standard sources (e.g. Salamon's lecture notes
\cite{Salamon1999}) for more details.

For each $x\in\mathcal{P}_{\tau}^{\alpha}(H,\delta\sigma)$, let $\mu_{\textrm{CZ}}(x)$
denote the \textbf{Conley-Zehnder index }of $x$. In order to define
the Conley-Zehnder index we choose a vertical preserving symplectic
trivialization (see \cite{AbbondandoloSchwarz2006}); the fact that
$c_{1}(T^{*}M,\omega_{\sigma})=0$ means that the value of $\mu_{\textrm{CZ}}(x)$
is independent of this choice of trivialization. Note however that
our sign conventions match those of \cite{AbbondandoloSchwarz2010}
not \cite{AbbondandoloSchwarz2006}. The non-degeneracy condition
\textbf{(N) }implies that $\mu_{\textrm{CZ}}(x)$ is always an integer. 

Given $k\in\mathbb{Z}$ let 
\[
\mathcal{P}_{\tau}^{\alpha}(H,\delta\sigma)_{k}:=\{x\in\mathcal{P}_{\tau}^{\alpha}(H,\delta\sigma)\,:\,\mu_{\textrm{CZ}}(x)=k\}.
\]
The moduli spaces $\mathcal{M}_{\tau}^{\alpha}(x_{-},x_{+},H,\delta\sigma,\mathbf{J})$
all carry a free $\mathbb{R}$-action given by $(s_{0}\cdot u)(s,t):=u(s-s_{0},t)$,
and we denote by $\mathcal{M}_{\tau}^{\alpha}(x_{-},x_{+},H,\delta\sigma,\mathbf{J})/\mathbb{R}$
the quotient space under this action. For a generic choice of\textbf{
}$\mathbf{J}=(J_{t})_{t\in\mathbb{S}_{\tau}}\subseteq\mathcal{V}_{g}$,
it follows from Theorem \ref{thm:AS1} and standard Floer-theoretic
arguments that the quotient moduli spaces $\mathcal{M}_{\tau}^{\alpha}(x_{-},x_{+},H,\delta\sigma,\mathbf{J})/\mathbb{R}$
all carry the structure of a $(\mu_{\textrm{CZ}}(x_{-})-\mu_{\textrm{CZ}}(x_{+})-1)$-dimensional
manifold. Moreover if $\mu_{\textrm{CZ}}(x_{-})=\mu_{\textrm{CZ}}(x_{+})+1$
then $\mathcal{M}_{\tau}^{\alpha}(x_{-},x_{+},H,\delta\sigma,\mathbf{J})/\mathbb{R}$
is actually compact (and hence a finite set).

We define the \textbf{Floer chain group} $CF_{k}^{\alpha}(H,\delta\sigma,\tau)$
to be the free $\mathbb{Z}_{2}$-module generated by the elements
of $\mathcal{P}_{\tau}^{\alpha}(H,\delta\sigma)_{k}$. Note that $CF_{\tau}^{\alpha}(H,\delta\sigma,\tau)$
may not be finitely generated. The boundary operator $\partial(\mathbf{J}):CF_{k}^{\alpha}(H,\delta\sigma,\tau)\rightarrow CF_{k-1}^{\alpha}(H,\delta\sigma,\tau)$
is defined by
\[
\partial(\mathbf{J})(x):=\sum_{y\in\mathcal{P}_{\tau}^{\alpha}(H,\delta\sigma)_{k-1}}n(x,y)y,\ \ \ x\in\mathcal{P}_{\tau}^{\alpha}(H,\delta\sigma)_{k},
\]
where
\[
n(x,y):=\#_{2}\left(\mathcal{M}_{\tau}^{\alpha}(x,y,H,\delta\sigma,\mathbf{J})/\mathbb{R}\right)
\]
denotes the parity of the finite set $\mathcal{M}_{\tau}^{\alpha}(x,y,H,\delta\sigma,\mathbf{J})/\mathbb{R}$.
This is well defined since the sum contains only finitely many non-zero
terms, thanks to the forthcoming Remark \ref{finite critical pts}.

The usual argument \cite{Salamon1999}, tells us that $\partial(\mathbf{J})\circ\partial(\mathbf{J})=0$,
and hence we may define the\textbf{ Floer homology }$HF_{*}^{\alpha}(H,\delta\sigma,\tau)$
to be the homology of the chain complex $\{CF_{*}^{\alpha}(H,\delta\sigma,\tau),\partial(\mathbf{J})\}$.
It is acceptable to omit the $\mathbf{J}$ from the notation for the
homology $HF_{*}^{\alpha}(H,\delta\sigma,\tau)$, as any two (generically
chosen) families $\mathbf{J}$ and $\mathbf{J}'$ produce chain homotopic
chain complexes (see \cite[Theorem 1.20]{AbbondandoloSchwarz2006}).

\section{Proofs}

\subsection{\label{sec:Proofs}The proof of Theorem \ref{thm:AS1}}

As mentioned in the Introduction, our proof of Theorem \ref{thm:AS1}
will closely follow Abbondandolo and Schwarz' method in \cite{AbbondandoloSchwarz2006}.
Their method has two distinct stages. The first stage appears as Lemma
1.12 in \cite{AbbondandoloSchwarz2006}, and asserts that under the
hypotheses of the theorem, $ $there exists a constant $R=R(a,b)>0$
such that for any $u=(q,p)\in\mathcal{M}_{\tau}^{\alpha}(a,b,H,\delta\sigma,\mathbf{J})$
and any interval $I\subseteq\mathbb{R}$ it holds that 
\begin{equation}
\left\Vert p\right\Vert _{W_{g}^{1,2}(I\times\mathbb{S}_{\tau})}\leq R\left(\left|I\right|^{1/2}+1\right).\label{eq:first stage}
\end{equation}
This stage uses heavily the fact that $H$ satisfies conditions \textbf{(H1)
}and \textbf{(H2)}.
The second stage appears as Theorem 1.14 in \cite{AbbondandoloSchwarz2006}.
Roughly speaking, the second stage works as follows: firstly, by Nash's
Theorem, we may isometrically embed the Riemannian manifold $(M,g)$
into $(\mathbb{R}^{N},g_{\textrm{eucl}})$. This embedding in turn
induces an isometric embedding of $(TT^{*}M,G_{g})$ into $(\mathbb{R}^{2N},g_{\textrm{eucl}})$.
Under this embedding if $\mathtt{i}$ denotes the canonical almost
complex structure on $\mathbb{R}^{2N}$ given by 
\[
\mathtt{i}=\left(\begin{array}{cc}
 & -\mathbb{1}\\
\mathbb{1}
\end{array}\right)
\]
then $\mathtt{i}|_{T^{*}M}=J_{g}$. The proof then uses Calderon-Zygmund
estimates for the Cauchy-Riemann operator, together with certain interpolation
inequalities, to upgrade equation \eqref{eq:first stage} to the full
statement of Theorem \ref{thm:AS1}. These estimates only work for
$\mathbf{J}$ contained in a sufficiently small neighborhood $\mathcal{W}_{g}$
of $J_{g}$: the set $\mathcal{V}_{g}$ in the statement of Theorem
\ref{thm:AS1} is then defined by $\mathcal{V}_{g}:=\mathcal{U}_{g}\cap\mathcal{W}_{g}$.
The proof of this stage goes through word for word in our situation,
and thus in order to prove Theorem \ref{thm:AS1} it suffices to prove
the first stage, namely equation \eqref{eq:first stage}.

The proof of \eqref{eq:first stage} (Lemma 1.12 in \cite{AbbondandoloSchwarz2006})
consists of six claims. A careful inspection of their proof shows
that everything apart from Claim 1 and Claim 2 goes through verbatim
in our case. Claims 1 and 2 however require a little more work. The
following lemma proves Claim 1.
\begin{lem}
\label{lem:claim 1}Fix $g\in\mathcal{R}_{\sigma}(M)$. If $\mathbf{J}=(J_{t})_{t\in\mathbb{S}_{\tau}}\subseteq\mathcal{U}_{g}$
and $u:\mathbb{R}\times\mathbb{S}_{\tau}\rightarrow T^{*}M$ satisfies
\eqref{eq:floer} and \eqref{eq:asymptotes} with respect to $\mathbf{J}$,
then 
\[
\left\Vert \partial_{s}u\right\Vert _{L_{G_{g}}^{2}(\mathbb{R}\times\mathbb{S}_{\tau})}^{2}\leq4\sup_{t\in\mathbb{S}_{\tau}}\left\Vert J_{t}\right\Vert _{L_{G_{g}}^{\infty}}^{2}(\mathcal{A}_{H,\sigma}(x_{-})-\mathcal{A}_{H,\sigma}(x_{+})).
\]
\end{lem}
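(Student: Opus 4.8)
The plan is to compute the derivative of the action along a Floer trajectory and integrate. Let $u:\mathbb{R}\times\mathbb{S}_{\tau}\to T^{*}M$ satisfy \eqref{eq:floer} and \eqref{eq:asymptotes}, and write $u_s(t):=u(s,\cdot)$. For fixed $s$, the map $s\mapsto\mathcal{A}_{H,\sigma}(u_s)$ is smooth, and using \eqref{eq:the gradient} together with the Floer equation $\partial_s u=-\nabla_{\mathbf{J}}\mathcal{A}_{H,\sigma}(u)$ I would compute
\[
\frac{d}{ds}\mathcal{A}_{H,\sigma}(u_s)=d\mathcal{A}_{H,\sigma}(u_s)(\partial_s u_s)=\left\langle\!\left\langle\nabla_{\mathbf{J}}\mathcal{A}_{H,\sigma}(u_s),\partial_s u_s\right\rangle\!\right\rangle_{L^2_{G_{\mathbf{J}}}(\mathbb{S}_{\tau})}=-\left\langle\!\left\langle\partial_s u_s,\partial_s u_s\right\rangle\!\right\rangle_{L^2_{G_{\mathbf{J}}}(\mathbb{S}_{\tau})}.
\]
Here the possibly non-symmetric pairing is $\langle\!\langle\xi,\zeta\rangle\!\rangle_{L^2_{G_{\mathbf{J}}}(\mathbb{S}_{\tau})}=\int_0^\tau\omega_\sigma(J_t\xi,\zeta)\,dt$, so the last term equals $-\int_0^\tau\omega_\sigma(J_t\partial_s u,\partial_s u)\,dt$.

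The key point is now to bound this from above by a negative multiple of $\|\partial_s u_s\|^2_{L^2_{G_{g}}(\mathbb{S}_{\tau})}$. Since $g\in\mathcal{R}_\sigma(M)$ and $\mathbf{J}\subseteq\mathcal{U}_g$, part (3) of Lemma \ref{lem:uniformly tame-1} gives $\omega_\sigma(J_t\xi,\xi)\geq\tfrac14 G_{J_t}(\xi,\xi)$ for all $\xi$, and the chain of inequalities proved there also yields $G_{J_t}(\xi,\xi)\geq(1-\|J_t-J_g\|_{L^\infty_{G_g}})G_g(\xi,\xi)\geq\tfrac67 G_g(\xi,\xi)$. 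Hence $\frac{d}{ds}\mathcal{A}_{H,\sigma}(u_s)\leq-c\,\|\partial_s u_s\|^2_{L^2_{G_g}(\mathbb{S}_{\tau})}$ for a positive constant $c$; tracking the constants, $c=\tfrac{1}{4}\cdot\tfrac{6}{7}=\tfrac{3}{14}\geq\tfrac{1}{\,4\sup_t\|J_t\|^2_{L^\infty_{G_g}}}$ after a crude estimate $\sup_t\|J_t\|_{L^\infty_{G_g}}\leq\tfrac87$ (or one simply carries the factor $\sup_t\|J_t\|^2_{L^\infty_{G_g}}$ through directly, which is the form stated). Integrating over $s\in\mathbb{R}$ and using the asymptotic conditions \eqref{eq:asymptotes}, which guarantee $\mathcal{A}_{H,\sigma}(u_s)\to\mathcal{A}_{H,\sigma}(x_\pm)$ as $s\to\pm\infty$, gives
\[
\mathcal{A}_{H,\sigma}(x_-)-\mathcal{A}_{H,\sigma}(x_+)=\int_{-\infty}^{\infty}\left(-\frac{d}{ds}\mathcal{A}_{H,\sigma}(u_s)\right)ds\geq\frac{1}{4\sup_{t}\|J_t\|^2_{L^\infty_{G_g}}}\left\Vert\partial_s u\right\Vert^2_{L^2_{G_g}(\mathbb{R}\times\mathbb{S}_{\tau})},
\]
which is the claimed inequality after rearranging.

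I expect the main technical obstacle to be bookkeeping rather than conceptual: one must be careful that the pairing $\langle\!\langle\cdot,\cdot\rangle\!\rangle_{L^2_{G_{\mathbf{J}}}}$ is not symmetric, so the identity $d\mathcal{A}_{H,\sigma}(\partial_s u)=-\langle\!\langle\partial_s u,\partial_s u\rangle\!\rangle$ must be read with $\partial_s u$ in the \emph{first} slot (as in \eqref{eq:the gradient}), and the comparison between $\omega_\sigma(J_t\cdot,\cdot)$ and $G_g(\cdot,\cdot)$ only goes through because $\mathbf{J}$ lies in $\mathcal{U}_g$ and $g\in\mathcal{R}_\sigma(M)$ — exactly the hypotheses in force. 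A second point requiring a line of justification is that $\mathcal{A}_{H,\sigma}(u_s)$ really does converge to $\mathcal{A}_{H,\sigma}(x_\pm)$; this follows from the $C^1_{\mathrm{loc}}$-convergence $u_s\to x_\pm$ implied by \eqref{eq:asymptotes} together with continuity of $\mathcal{A}_{H,\sigma}$ on $\Lambda^\alpha_\tau T^*M$, the latter being where $\sigma$-atoroidality (making $\mathcal{A}_\sigma$ well-defined and real-valued via \eqref{eq:why suwhy est}) is used. No compactness or isoperimetric input is needed here — Lemma \ref{lem:BF-2} enters only later.
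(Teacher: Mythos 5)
Your argument is the paper's argument: identify $-\tfrac{d}{ds}\mathcal{A}_{H,\sigma}(u(s,\cdot))$ with $\int_0^\tau\omega_\sigma(J_t\partial_s u,\partial_s u)\,dt$ via \eqref{eq:the gradient} and the Floer equation, bound that integrand from below by a multiple of $G_g(\partial_s u,\partial_s u)$ using Lemma \ref{lem:uniformly tame-1}.3, and integrate over $s$ using \eqref{eq:asymptotes}. The one slip is the final constant comparison. Your route through $G_{J_t}\geq\tfrac{6}{7}G_g$ correctly yields $\left\Vert\partial_s u\right\Vert^2_{L^2_{G_g}(\mathbb{R}\times\mathbb{S}_\tau)}\leq\tfrac{14}{3}\left(\mathcal{A}_{H,\sigma}(x_-)-\mathcal{A}_{H,\sigma}(x_+)\right)$, which is a perfectly serviceable energy bound; but the claim $\tfrac{3}{14}\geq\tfrac{1}{4\sup_t\Vert J_t\Vert^2_{L^\infty_{G_g}}}$ requires a \emph{lower} bound $\sup_t\Vert J_t\Vert^2_{L^\infty_{G_g}}\geq\tfrac{7}{6}$, not the upper bound $\sup_t\Vert J_t\Vert_{L^\infty_{G_g}}\leq\tfrac{8}{7}$ you invoke. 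In general one only has $\Vert J_t\Vert_{L^\infty_{G_g}}\geq 1$ (take $\mathbf{J}\equiv J_g$), in which case the stated constant is $4<\tfrac{14}{3}$, so your bound does not literally imply the one in the lemma. The fix is exactly your parenthetical alternative, which is what the paper does: use the pointwise comparison $G_g(\xi,\xi)\leq\Vert J_t\Vert^2_{L^\infty_{G_g}}G_{J_t}(\xi,\xi)$ (the operator $-J_gJ_t$ representing $G_{J_t}$ with respect to $G_g$ is symmetric positive definite with inverse $J_tJ_g$, so its smallest eigenvalue is at least $1/\Vert J_t\Vert_{L^\infty_{G_g}}$, and $\Vert J_t\Vert_{L^\infty_{G_g}}\leq\Vert J_t\Vert^2_{L^\infty_{G_g}}$), then apply the tameness bound $\omega_\sigma(J_t\xi,\xi)\geq\tfrac14 G_{J_t}(\xi,\xi)$ and integrate as you do. None of this affects any downstream use of the lemma, where only the existence of \emph{some} constant matters.
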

\begin{proof}
The proof is a simple computation using  Lemma \ref{lem:uniformly tame-1}.3 and \eqref{eq:the gradient}.

\begin{align*}
\left\Vert \partial_{s}u\right\Vert _{L_{G_{g}}^{2}(\mathbb{R}\times\mathbb{S}_{\tau})}^{2} & \leq\sup_{t\in\mathbb{S}_{\tau}}\left\Vert J_{t}\right\Vert _{L_{G_{g}}^{\infty}}^{2}\left\Vert \partial_{s}u\right\Vert _{L_{G_{J_{t}}}^{2}(\mathbb{R}\times\mathbb{S}_{\tau})}^{2}\\
 & \leq4\sup_{t\in\mathbb{S}_{\tau}}\left\Vert J_{t}\right\Vert _{L_{G_{g}}^{\infty}}^{2}\int_{-\infty}^{\infty}\int_{0}^{\tau}\omega_{\sigma}(J_{t}\partial_{s}u,\partial_{s}u)dtds\\
 & =4\sup_{t\in\mathbb{S}_{\tau}}\left\Vert J_{t}\right\Vert _{L_{G_{g}}^{\infty}}^{2}\int_{-\infty}^{\infty}(-d\mathcal{A}_{H,\sigma}(u(s)))(\partial_{s}u)ds
\\
 & =4\sup_{t\in\mathbb{S}_{\tau}}\left\Vert J_{t}\right\Vert _{L_{G_{g}}^{\infty}}^{2}\left(\mathcal{A}_{H,\sigma}(x_{-})-\mathcal{A}_{H,\sigma}(x_{+})\right).
\end{align*}

\end{proof}
The proof of Claim 2 is somewhat trickier, and we state this below
as a separate lemma. It is this lemma that explains why in our case
the constant $\delta_{0}(H,\sigma,g)$ enters the picture. 
\begin{lem}
\label{lem:claim 2}Fix $g\in\mathcal{R}_{\sigma}(M)$. Assume $\tau\left|\delta\right|<\delta_{0}(H,\sigma,g)$
and $\alpha\in[S^{1},M]$ is a $\sigma$-atoroidal class. Fix $\mathbf{J}=(J_{t})_{t\in\mathbb{S}_{\tau}}\subseteq\mathcal{U}_{g}$.
Then for all $a\in\mathbb{R}$ there exists a constant $S=S(a)>0$
such that for any $u=(q,p)\in\mathcal{M}_{\tau}^{\alpha}(-\infty,a,H,\delta\sigma,\mathbf{J})$
one has 
\[
\left\Vert p(s,\cdot)\right\Vert _{L_{g}^{2}(\mathbb{S}_{\tau})}\leq S\left(1+\left\Vert \partial_{s}u(s,\cdot)\right\Vert _{L_{G_{g}}^{2}(\mathbb{S}_{\tau})}\right).
\]
\end{lem}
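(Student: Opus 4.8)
The plan is to adapt Abbondandolo--Schwarz's argument for their Claim 2, paying attention to where the magnetic term and the size of $\tau|\delta|$ intervene. First I would rewrite the Floer equation \eqref{eq:floer}, using $J_t^2=-\mathbb 1$, in the equivalent form $\partial_t u = X_{H,\delta\sigma}(t,u)+J_t(u)\partial_s u$, and then evaluate the action functional along the loop $t\mapsto u(s,t)$ for fixed $s$. Plugging this expression for $\partial_t u$ into $\mathcal A_{H,\delta\sigma}(u(s,\cdot))=\int_0^\tau\lambda_u(\partial_t u)\,dt-\int_0^\tau H(t,u)\,dt+\mathcal A_{\delta\sigma}(\pi\circ u(s,\cdot))$ and using \eqref{eq:dh z} (so that $\lambda(X_{H,\delta\sigma})=dH(Z)$, independently of the magnetic term) gives
\[ \mathcal A_{H,\delta\sigma}(u(s,\cdot))=\int_0^\tau\bigl[dH(t,u)Z(u)-H(t,u)\bigr]dt+\int_0^\tau\lambda_u\bigl(J_t(u)\partial_s u\bigr)dt+\mathcal A_{\delta\sigma}(\pi\circ u(s,\cdot)). \]
Condition \textbf{(H1)} bounds the first integral below by $h_1\|p(s,\cdot)\|_{L_g^2(\mathbb S_\tau)}^2-k_1\tau$, so the hypothesis $\mathcal A_{H,\delta\sigma}(u(s,\cdot))\le a$ gives $h_1\|p(s,\cdot)\|_{L_g^2(\mathbb S_\tau)}^2\le a+k_1\tau-\int_0^\tau\lambda_u(J_t(u)\partial_s u)\,dt-\mathcal A_{\delta\sigma}(\pi\circ u(s,\cdot))$.

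It remains to estimate the last two terms. For the $\lambda$-term I would use $|\lambda_{(q,p)}(\xi)|\le|p|\,|\xi^h|\le|p|\sqrt{G_g(\xi,\xi)}$ and $\|J_t\|_{L_{G_g}^\infty}\le\|J_g\|_{L_{G_g}^\infty}+\tfrac17=\tfrac87$ (valid since $\mathbf J\subseteq\mathcal U_g$), so Cauchy--Schwarz yields $\bigl|\int_0^\tau\lambda_u(J_t(u)\partial_s u)\,dt\bigr|\le\tfrac87\|p(s,\cdot)\|_{L_g^2(\mathbb S_\tau)}\|\partial_s u(s,\cdot)\|_{L_{G_g}^2(\mathbb S_\tau)}$. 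For the magnetic term, Lemma \ref{lem:BF-2} applied to $q_s:=\pi\circ u(s,\cdot)\in\Lambda_\tau^\alpha M$ gives $|\mathcal A_{\delta\sigma}(q_s)|=|\delta|\,|\mathcal A_\sigma(q_s)|\le|\delta|\,C_0(\sigma,g)\,\ell(q_s)^2+|\delta|\,C_1(\sigma,g,\alpha)$ with $\ell(q_s):=\int_0^\tau|\dot q_s|\,dt$. The crucial step is to control $\ell(q_s)$ by $\|p(s,\cdot)\|_{L_g^2(\mathbb S_\tau)}$: since $\dot q_s=(\partial_t u(s,\cdot))^h$ and, by the same computation that gives \eqref{eq:dh z}, the horizontal part of $X_{H,\delta\sigma}$ does not see $\pi^*\sigma$ and satisfies $|X_{H,\delta\sigma}^h|=|\nabla_p H|$, condition \eqref{eq:p part of H2} gives $|\dot q_s(t)|\le\sqrt{h_2|p(s,t)|^2+k_2}+\tfrac87|\partial_s u(s,t)|_{G_g}$, hence by Cauchy--Schwarz
\[ \ell(q_s)\le\sqrt{\tau h_2}\,\|p(s,\cdot)\|_{L_g^2(\mathbb S_\tau)}+\tau\sqrt{k_2}+\tfrac87\sqrt\tau\,\|\partial_s u(s,\cdot)\|_{L_{G_g}^2(\mathbb S_\tau)}. \]

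To conclude, for any $\varepsilon>0$ I would apply Young's inequality to the last display and to the $\lambda$-term, producing
\[ \bigl(h_1-\varepsilon-(1+\varepsilon)\,|\delta|\,C_0(\sigma,g)\,\tau h_2\bigr)\,\|p(s,\cdot)\|_{L_g^2(\mathbb S_\tau)}^2\le C(a,\varepsilon)\bigl(1+\|\partial_s u(s,\cdot)\|_{L_{G_g}^2(\mathbb S_\tau)}^2\bigr). \]
Then it suffices to make the coefficient on the left strictly positive: choosing $h_1$ slightly below $\eta_1(H,g)$, $h_2$ slightly above $\eta_2(H,g)$, and $\varepsilon$ small, positivity reduces to $|\delta|\,C_0(\sigma,g)\,\tau\,\eta_2(H,g)<\eta_1(H,g)$, which holds with room to spare since $\tau|\delta|<\delta_0(H,\sigma,g)=\eta_1(H,g)/\bigl(2C_0(\sigma,g)\eta_2(H,g)\bigr)$; dividing and taking square roots then gives $\|p(s,\cdot)\|_{L_g^2(\mathbb S_\tau)}\le S(1+\|\partial_s u(s,\cdot)\|_{L_{G_g}^2(\mathbb S_\tau)})$ with $S=S(a)$. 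In the remaining case where $\sigma$ satisfies \textbf{($\boldsymbol{\sigma}$0)} and $\delta_0=\infty$, the same scheme works using a \emph{linear} isoperimetric inequality $|\mathcal A_\sigma(q)|\le c_0\ell(q)+c_1$ (which a bounded primitive yields in place of Lemma \ref{lem:BF-2}): then $|\delta|c_0\ell(q_s)$ contributes, after Young's inequality, an arbitrarily small multiple of $\|p(s,\cdot)\|_{L_g^2(\mathbb S_\tau)}^2$, so no restriction on $\delta$ is required.

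I expect the main obstacle to be precisely the constant-chasing in the last two steps: one must squeeze out of the (quadratic) isoperimetric inequality an estimate for $\ell(q_s)^2$ whose $\|p(s,\cdot)\|_{L_g^2(\mathbb S_\tau)}^2$-coefficient is essentially $\tau h_2$ and not some larger multiple, so that it can be absorbed into the $h_1\|p(s,\cdot)\|_{L_g^2(\mathbb S_\tau)}^2$ coming from \textbf{(H1)} exactly under the stated bound on $\tau|\delta|$. This is why $\delta_0(H,\sigma,g)$ must be built from the optimal constants $\eta_1(H,g),\eta_2(H,g)$, and why only \eqref{eq:p part of H2} --- not all of \textbf{(H2)} --- is needed here, since no growth bound on $\nabla_q H$ enters.
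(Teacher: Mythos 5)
Your proposal is correct and follows essentially the same route as the paper's proof: express $\mathcal{A}_{H,\delta\sigma}(u(s,\cdot))$ via $\partial_t u = X_{H,\delta\sigma}+J_t\partial_s u$ and \eqref{eq:dh z}, bound it below using \textbf{(H1)}, control $\ell(q_s)$ through the horizontal part $\nabla_p H$ of $X_{H,\delta\sigma}$ and \eqref{eq:p part of H2}, feed this into the quadratic isoperimetric inequality of Lemma \ref{lem:BF-2}, and absorb the cross terms by Young's inequality under the smallness assumption on $\tau|\delta|$ (with the linear isoperimetric inequality handling the \textbf{($\boldsymbol{\sigma}$0)} case). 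The only cosmetic difference is that you estimate $\ell(q_s)$ by the triangle inequality before squaring, which yields a coefficient $(1+\varepsilon)\tau h_2$ on $\|p(s,\cdot)\|^2_{L^2_g(\mathbb{S}_\tau)}$ rather than the paper's $2\tau h_2$, so your positivity condition is even slightly weaker than the one guaranteed by $\delta_0(H,\sigma,g)$.
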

\begin{proof}
We begin with the more difficult case where $\sigma$ satisfies \textbf{($\boldsymbol{\sigma}$1)}
but not \textbf{($\boldsymbol{\sigma}$0)}, so that by \eqref{eq:delta0},
we have 
\[
\delta_{0}(H,\sigma,g)=\frac{\eta_{1}(H,g)}{2C_{0}\eta_{2}(H,g)}.
\]
Set
\[
T:=\sup_{t\in\mathbb{S}_{\tau}}\left\Vert J_{t}\right\Vert _{L_{G_{g}}^{\infty}}.
\]
Fix $u=(q,p)\in\mathcal{M}_{\tau}^{\alpha}(-\infty,a,H,\sigma,\mathbf{J})$
as in the statement of the lemma. Observe that by \eqref{eq:dh z}
we have: 
\begin{align*}
\lambda(\partial_{t}u) & =\lambda(X_{H,\sigma}(t,u))+\lambda(J_{t}(u)\partial_{s}u)\\
 & =dH(t,u)Z(u)+d\lambda(Z(u),J_{t}(\partial_{s}u))\\
 & \geq dH(t,u)Z(u)-T\left|p\right|G_{g}(\partial_{s}u,\partial_{s}u)^{1/2}.
\end{align*}
Thus if $H$ satisfies \textbf{(H1) }with
respect to $h_{1}>0$ and $k_{1}\ge 0$, then 
\[
\lambda(\partial_{t}u)-H(t,u)\geq h_{1}\left|p\right|^{2}-k_{1}-T\left|p\right|G_{g}(\partial_{s}u,\partial_{s}u)^{1/2},
\]
and hence 
\[
\mathcal{A}_{H}(u(s,\cdot))\geq h_{1}\left\Vert p(s,\cdot)\right\Vert _{L_{g}^{2}(\mathbb{S}_{\tau})}^{2}-k_{1}\tau-T\left\Vert p(s,\cdot)\right\Vert _{L_{g}^{2}(\mathbb{S}_{\tau})}\left\Vert \partial_{s}u(s,\cdot)\right\Vert _{L_{G_{g}}^{2}(\mathbb{S}_{\tau})}.
\]
Taking horizontal components of the equation
\[
\partial_{t}u=J_{t}(u)\partial_{s}u+X_{H,\sigma}(t,u)
\]
gives
\[
\partial_{t}q=(J_{t}(u)\partial_{s}u)^{h}+\nabla_{p}H(t,q,p),
\]
and hence if $H$ satisfies the second of the two conditions needed
for \textbf{(H2) }with $h_{2}>0$ and $k_{2} \ge 0$ then 
\begin{align*}
\left|\partial_{t}q\right|^{2} & \leq2\left|(J_{t}(u)\partial_{s}u)^{h}\right|^{2}+2\left|\nabla_{p}H(t,q,p)\right|^{2}\\
 & \leq2\left\Vert J_{t}\right\Vert _{L_{G_{g}}^{\infty}}^{2}G_{g}(\partial_{s}u,\partial_{s}u)+2h_{2}\left|p\right|^{2}+2k_{2}.
\end{align*}
Thus 
\begin{align*}
\left(\int_{0}^{\tau}\left|\partial_{t}q(s,\cdot)\right|dt\right)^{2} & \leq\tau\int_{0}^{\tau}\left|\partial_{t}q(s,\cdot)\right|^{2}dt\\
 & \leq2\tau T^{2}\left\Vert \partial_{s}u(s,\cdot)\right\Vert _{L_{G_{g}}^{2}(\mathbb{S}_{\tau})}^{2}+2\tau h_{2}\left\Vert p(s,\cdot)\right\Vert _{L_{g}^{2}(\mathbb{S}_{\tau})}^{2}+2\tau k_{2}.
\end{align*}
Thus by Lemma \ref{lem:BF-2}, 
\begin{align*}
\left|\mathcal{A}_{\delta\sigma}(q(s,\cdot))\right| & \leq\left|\delta\right|\left(C_{0}\left(\int_{0}^{\tau}\left|\partial_{t}q(s,\cdot)\right|dt\right)^{2}+C_{1}\right)\\
 & \leq2\left|\delta\right|C_{0}\tau T^{2}\left\Vert \partial_{s}u(s,\cdot)\right\Vert _{L_{G_{g}}^{2}(\mathbb{S}_{\tau})}^{2}+2\left|\delta\right|C_{0}\tau h_{2}\left\Vert p(s,\cdot)\right\Vert _{L_{g}^{2}(\mathbb{S}_{\tau})}^{2}+\left|\delta\right|(2C_{0}\tau k_{2}+C_{1}),
\end{align*}
and hence

\begin{align*}
a & \geq\mathcal{A}_{H,\delta\sigma}(u(s,\cdot))\\
 & \geq\mathcal{A}_{H}(u(s,\cdot))-\left|\mathcal{A}_{\delta\sigma}(q(s,\cdot))\right|\\
 & \geq(h_{1}-2\left|\delta\right|C_{0}\tau h_{2})\left\Vert p(s,\cdot)\right\Vert _{L_{g}^{2}(\mathbb{S}_{\tau})}^{2}-T\left\Vert p(s,\cdot)\right\Vert _{L_{g}^{2}(\mathbb{S}_{\tau})}\left\Vert \partial_{s}u(s,\cdot)\right\Vert _{L_{G_{g}}^{2}(\mathbb{S}_{\tau})}\\
 & -2\left|\delta\right|C_{0}\tau T^{2}\left\Vert \partial_{s}u(s,\cdot)\right\Vert _{L_{G_{g}}^{2}(\mathbb{S}_{\tau})}^{2}-k_{1}\tau-\left|\delta\right|(2C_{0}\tau k_{2}+C_{1}).
\end{align*}
Using the fact that for any $c,d,\mu>0$ it holds that 
\[
cd\leq\mu c^{2}+\frac{1}{4\mu}d^{2},
\]
we have that for any $\mu>0$ it holds that
\begin{align*}
a & \geq(h_{1}-2\left|\delta\right|\tau C_{0} h_{2}-\mu)\left\Vert p(s,\cdot)\right\Vert _{L_{g}^{2}(\mathbb{S}_{\tau})}^{2}-\left(2\left|\delta\right|C_{0}\tau T^{2}+\frac{1}{4\mu}T^{2}\right)\left\Vert \partial_{s}u(s,\cdot)\right\Vert _{L_{G_{g}}^{2}(\mathbb{S}_{\tau})}^{2}\\
 & -k_{1}\tau-\left|\delta\right|(2C_{0}\tau k_{2}+C_{1}).
\end{align*}
Our choice of $\delta$ implies that 
\[
h_{1}-2\left|\delta\right|\tau C_{0} h_{2}>0,
\]
and hence for suitably small $\mu$ we obtain an equality of the desired
form.

Finally consider the case where $\sigma$ satisfies the stronger condition\textbf{
($\boldsymbol{\sigma}$0)}.\textbf{ }In this case $\widetilde{\sigma}$
admits a bounded primitive $\theta$, and Lemma \ref{lem:BF-2} can
be upgraded to a \textbf{linear isoperimetric inequality} - see \cite[Lemma 4.4]{BaeFrauenfelder2010}.
It is then easy to improve the proof above to work for any $\delta\in\mathbb{R}$,
and we omit the details.
\end{proof}
We have now verified Claim 2 of Lemma 1.12 in \cite{AbbondandoloSchwarz2006}.
As discussed above, the remaining parts of the proof of Lemma 1.12
go through without change in our situation, and thus this concludes
the proof of equation \eqref{eq:first stage}, and hence also of Theorem
\ref{thm:AS1}.
\begin{rem}
\label{finite critical pts}This argument also proves that if $\tau\left|\delta\right|<\delta_{0}(H,\sigma,g)$
and the triple $(H,\delta\sigma,\alpha)$ satisfies Condition \textbf{(N)},\textbf{
}then for any $a\in\mathbb{R}$, there are at most finitely many critical
points $x\in\mathcal{P}_{\tau}^{\alpha}(H,\delta\sigma)$ with $\mathcal{A}_{H,\delta\sigma}(x)\leq a$.
Indeed, the proof shows that if 
\[
\mathbb{P}:=\left\{ x\in\mathcal{P}_{\tau}^{\alpha}(H,\delta\sigma)\,:\,\mathcal{A}_{H,\delta\sigma}(x)\leq a\right\} ,
\]
then there exists a uniform bound on $\left\Vert p\right\Vert _{L_{g}^{2}(\mathbb{S}_{\tau})}^{2}$
for all $x=(q,p)\in\mathbb{P}$. 

Since 
\[
\left|\dot{x}\right|=\left|X_{H,\sigma}(t,x)\right|\leq h_{\sigma,g}\left(1+\left|p\right|^{2}\right)
\]
by \eqref{eq:the condition on X}, we see that $\mathbb{P}$ is bounded
in $W_{g}^{1,1}$, and hence in $L_{g}^{\infty}$. In particular,
the set 
\[
\{x(0)\,:\, x\in\mathbb{P}\}
\]
is precompact in $T^{*}M$, and since it is discrete by Condition
\textbf{(N)}, it is finite.
\end{rem}

\subsection{\label{sub:homotopies}Invariance}

The following result completes the proof of Theorem A from the Introduction,
whose proof is similar to \cite[Lemma 1.21]{AbbondandoloSchwarz2006}
and \cite[Theorem 2.7]{BaeFrauenfelder2010}. Indeed, to obtain Theorem A from Theorem \ref{thm:third statement}, 
simply take $\sigma_0 = \sigma$ and $\sigma_1 = 0$, and apply Theorem \ref{thm:[Abbondandolo-Schwarz-,-Salamon-}.
\begin{thm}
\label{thm:third statement}\textbf{\emph{(Invariance of Floer homology under homotopies)}}
Fix a Riemannian metric $g$ on $M$,
$\alpha\in[S^{1},M]$ and $\tau>0$. Suppose we are given:
\begin{enumerate}
\item 2-forms $\sigma_{0}$ and $\sigma_{1}$ that both satisfy \textbf{\emph{(}}\emph{$\boldsymbol{\sigma}$}\textbf{\emph{1)}}
and are such that $\alpha$ is both $\sigma_{0}$-atoroidal and $\sigma_{1}$-atoroidal,
and such that $g\in\mathcal{R}_{\sigma_{0}}(M)\cap\mathcal{R}_{\sigma_{1}}(M)$.
Set 
\[
\sigma_{s}:=(1-s)\sigma_{0}+s\sigma_{1}.
\]

\item Hamiltonians $H_{0}$ and $H_{1}$ satisfying the Abbondandolo-Schwarz
growth conditions. Set
\[
H_{s}:=(1-s)H_{0}+sH_{1},
\]

\end{enumerate}
Choose a smooth function $\delta:[0,1]\rightarrow\mathbb{R}$ such
that
\[
\tau\left|\delta(s)\right|<\delta_{0}(H_{s},\sigma_{s},g)
\]
 for each $s\in[0,1]$, and suppose that both 
$(H_{0},\delta(0)\sigma_{0},\alpha)$ and $(H_{1},\delta(1)\sigma_{1},\alpha)$ satisfy Condition\emph{
}\textbf{\emph{(N)}}. Then there exists a continuation map 
\[
\Psi:CF_{*}^{\alpha}(H_{0},\delta(0)\sigma_{0},\tau)\rightarrow CF_{*}^{\alpha}(H_{1},\delta(1)\sigma_{1},\tau)
\]
inducing an isomorphism 
\[
\psi:HF_{*}^{\alpha}(H_{0},\delta(0)\sigma_{0},\tau)\rightarrow HF_{*}^{\alpha}(H_{1},\delta(1)\sigma_{1},\tau).
\]

\end{thm}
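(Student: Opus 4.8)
\textbf{Proof strategy for Theorem \ref{thm:third statement}.}

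The plan is to run the standard continuation-map argument of Floer theory, where the new input required beyond \cite[Lemma 1.21]{AbbondandoloSchwarz2006} is a version of Theorem \ref{thm:AS1} for the $s$-dependent Floer equation. First I would fix a smooth cutoff function $\beta:\mathbb{R}\to[0,1]$ with $\beta(s)=0$ for $s\le 0$ and $\beta(s)=1$ for $s\ge 1$, and use it to interpolate: for each $r\in\mathbb{R}$ set $H^r:=H_{\beta(r)}$, $\sigma^r:=\sigma_{\beta(r)}$, $\delta^r:=\delta(\beta(r))$, and choose a smooth family $\mathbf{J}^r=(J^r_t)$ lying in $\mathcal{V}_g$ (this is possible since $g$ lies in $\mathcal{R}_{\sigma_0}(M)\cap\mathcal{R}_{\sigma_1}(M)$, hence also in $\mathcal{R}_{\sigma_s}(M)$ for all $s$, because $Y_{\sigma_s,g}=(1-s)Y_{\sigma_0,g}+sY_{\sigma_1,g}$ has $L^\infty_g$-norm $\le 1$ by convexity). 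The continuation map $\Psi$ then counts isolated solutions $u:\mathbb{R}\times\mathbb{S}_\tau\to T^*M$ of the $r$-dependent Floer equation $\partial_s u+J^r_t(u)(\partial_t u-X_{H^r,\delta^r\sigma^r}(t,u))=0$ with $u(-\infty,\cdot)\in\mathcal{P}_\tau^\alpha(H_0,\delta(0)\sigma_0)$ and $u(+\infty,\cdot)\in\mathcal{P}_\tau^\alpha(H_1,\delta(1)\sigma_1)$.

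The key analytic step, and the main obstacle, is establishing an $L^\infty$-bound on such continuation trajectories, i.e.\ the analogue of Theorem \ref{thm:AS1} for the homotopy equation. This is where the hypothesis $\tau|\delta(s)|<\delta_0(H_s,\sigma_s,g)$ for \emph{all} $s$ is used rather than just at the endpoints. I would proceed exactly as in Section \ref{sec:Proofs}: the only claims in the Abbondandolo--Schwarz argument that need modification are Claims 1 and 2, which correspond here to Lemma \ref{lem:claim 1} and Lemma \ref{lem:claim 2}. For Claim 1 one loses exact energy identities because $\mathcal{A}_{H^r,\delta^r\sigma^r}$ depends on $r=s$, but the standard fix works: the $s$-derivative of the action along a trajectory picks up an extra term $\partial_r\mathcal{A}_{H^r,\delta^r\sigma^r}(u)$, and since the homotopy is compactly supported in $r$ and (after a preliminary a priori argument, or by a monotonicity/boundedness assumption built into the cutoff) the relevant quantities are controlled, one still gets a uniform $W^{1,2}$-bound on $p$ on intervals of bounded length. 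For Claim 2 (Lemma \ref{lem:claim 2}), the proof is essentially identical: one runs the same chain of inequalities using that $H^r=H_{\beta(r)}$ satisfies \textbf{(H1)} and the $p$-part of \textbf{(H2)} with constants that can be chosen uniformly in $r$ (by convexity of $H_s$ in $s$ and continuity), uses the quadratic isoperimetric inequality of Lemma \ref{lem:BF-2} for $\sigma^r$ (with $C_0(\sigma^r,g)$ uniformly bounded on the compact parameter set), and the hypothesis $\tau|\delta^r|C_0(\sigma^r,g)\eta_2(H^r,g)<\tfrac12\eta_1(H^r,g)$ guarantees the crucial coefficient $h_1-2|\delta^r|\tau C_0 h_2$ stays positive uniformly in $r$. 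The remaining four claims of Lemma 1.12 in \cite{AbbondandoloSchwarz2006}, and the second-stage Calderon--Zygmund bootstrap of their Theorem 1.14, go through verbatim for $\mathbf{J}^r\subseteq\mathcal{V}_g$.

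Once the $L^\infty$-bound is in hand, the rest is routine Floer theory: the symplectic asphericity of $\omega_{\sigma_s}$ (guaranteed by \textbf{($\boldsymbol{\sigma}$1)}) rules out sphere bubbling, and the $\sigma_s$-atoroidality of $\alpha$ makes $\mathcal{A}_{H_s,\delta(s)\sigma_s}$ real-valued so that energy is controlled by the asymptotes; standard transversality (for generic choice of $\mathbf{J}^r$) makes the zero-dimensional moduli spaces of continuation trajectories finite, and $\Psi$ is defined by counting them mod $2$. That $\Psi$ is a chain map, that the induced $\psi$ on homology is independent of the choices ($\beta$, $\mathbf{J}^r$, the path $\delta$), and that it is an isomorphism with inverse induced by the reversed homotopy, all follow from the usual gluing and homotopy-of-homotopies arguments, e.g.\ as in \cite{Salamon1999} or \cite[Theorem 2.7]{BaeFrauenfelder2010}. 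To then deduce Theorem A one specializes to $\sigma_0=\sigma$, $\sigma_1=0$, $H_0=H_1=H$, $\delta(s)\equiv\delta$ (noting $\tau|\delta|<\delta_0(H,\sigma,g)\le\delta_0(H_s,\sigma_s,g)$ fails in general, so in fact one takes $\delta(s)=(1-s)\delta$, which keeps $\tau|\delta(s)|<\delta_0$ throughout since $\delta_0(H,0,g)=\infty$), and invokes Theorem \ref{thm:[Abbondandolo-Schwarz-,-Salamon-} to identify the target with loop space homology.
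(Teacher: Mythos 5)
Your overall strategy is the right one, but there is a genuine gap at the point you describe as ``the standard fix works \dots\ after a preliminary a priori argument, or by a monotonicity/boundedness assumption built into the cutoff, the relevant quantities are controlled.'' You propose a \emph{single} continuation over the whole homotopy from $(H_0,\delta(0)\sigma_0)$ to $(H_1,\delta(1)\sigma_1)$. For such a continuation the total variation of the $s$-dependent action along a trajectory,
\[
\Delta(u):=\int_{-\infty}^{\infty}\left|\frac{\partial}{\partial s}\mathcal{A}_{s}(u(s,\cdot))\right|ds,
\]
picks up two contributions: $\beta'(s)(H_1-H_0)$, which on the noncompact space $T^{*}M$ is only bounded by a constant (not small) times $1+|p|^{2}$, and the $s$-derivative of the magnetic term, which by the quadratic isoperimetric inequality of Lemma \ref{lem:BF-2} is bounded by a constant (again not small) times $\bigl(\int_0^\tau|\partial_t q|\,dt\bigr)^{2}\lesssim \left\Vert \partial_{s}u\right\Vert^{2}+\left\Vert p\right\Vert^{2}+1$. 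One then has $\left\Vert \partial_{s}u\right\Vert_{L^2(\mathbb{R}\times\mathbb{S}_\tau)}^{2}\leq 4T^{2}(b-a+\Delta(u))$ and $\Delta(u)\leq c_{1}\left\Vert p\right\Vert^{2}+c_{2}\left\Vert \partial_{s}u\right\Vert^{2}+c_{3}$, and the bootstrap closes only if $4T^{2}c_{2}<1$ and (in the final absorption against the coefficient $\chi$ coming from \textbf{(H1)}) $c_{1}$ is small. For a one-shot homotopy $c_{1},c_{2}$ are of order $1$, not small, and the estimate does not close; there is no ``monotonicity built into the cutoff'' that rescues this.

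The paper's proof resolves exactly this by subdividing $[0,1]$ into $N$ pieces $0=r_{0}<\dots<r_{N}=1$ chosen so that $|H_{r_{i+1}}-H_{r_{i}}|\leq\varepsilon(1+|p|^{2})$, $C_{0}(\sigma_{r_{i+1}}-\sigma_{r_{i}},g)<\varepsilon$ and $|f_i'|<2\varepsilon$, running the compactness argument (Lemma \ref{lem:compact set-1}) on each short piece where $c_{1},c_{2}=O(\varepsilon)$, and defining $\Psi$ as the composition $\Psi_{N-1}\circ\dots\circ\Psi_{0}$ of the resulting $N$ continuation maps (after perturbing at the intermediate times $r_i$ to restore Condition \textbf{(N)}). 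This is also what Abbondandolo--Schwarz do in their Lemma 1.21, which you cite; you need to actually import their subdivision, not just their cutoff. Your remaining points (uniform tameness of $\mathbf{J}\subseteq\mathcal{V}_g$ via convexity of $Y_{\sigma_s,g}$, the adaptation of Claims 1 and 2, the Calderon--Zygmund bootstrap, and the specialization $\sigma_0=\sigma$, $\sigma_1=0$ to deduce Theorem A) are consistent with the paper; your worry that a constant $\delta$ fails for the specialization is unfounded, since $C_0((1-s)\sigma,g)=(1-s)C_0(\sigma,g)$ makes $\delta_0(H,\sigma_s,g)$ nondecreasing in $s$, but your fix is harmless.
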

Before getting started on the proof, we will introduce some notation.
Our assumption
\begin{equation}
\tau\left|\delta(s)\right|<\delta_{0}(H_{s},\sigma_{s},g)\ \ \ \mbox{for all }s\in[0,1]\label{eq:key smallness assumption-1-1}
\end{equation}
 implies that we can choose bounded functions $\eta_{1}(s),\eta_{2}(s)$
and constants $k_{1},k_{2}\geq0$ and $\chi>0$ such that for all
$s\in[0,1]$:
\begin{enumerate}
\item $H_{s}$ satisfies \textbf{(H1) }with respect to $\eta_{1}(s)$ and
$k_{1}$;
\item $H_{s}$ satisfies \eqref{eq:p part of H2} with respect to $\eta_{2}(s)$
and $k_{2}$;
\item If $C_{0}(\sigma_{s},g)$ and $C_{1}(\sigma_{s},g,\alpha)$ denote
the constants associated to $\sigma_{s}$ from Lemma \ref{lem:BF-2}
then 
\begin{equation}
\eta_{1}(s)-2\left|\delta(s)\right|\tau C_{0}(\sigma_{s},g)\eta_{2}(s)>\chi\ \ \ \mbox{for all }s\in[0,1].\label{eq:the key to positivity}
\end{equation}

\end{enumerate}
Set
\[
\eta_{1}:=\max_{s\in[0,1]}\eta_{1}(s),\ \ \ \eta_{2}:=\max_{s\in[0,1]}\eta_{2}(s);
\]
\[
C_{0}:=\max_{s\in[0,1]}C_{0}(\sigma_{s},g),\ \ \ C_{1}:=\max_{s\in[0,1]}C_{1}(\sigma_{s},g,\alpha);
\]
\[
d:=\max_{s\in[0,1]}\left|\delta(s)\right|.
\]
Now fix $\varepsilon>0$, which we will specify precisely later. Choose
a natural number 
\begin{equation}
N\geq\frac{2\max_{s\in[0,1]}\left|\delta'(s)\right|}{\varepsilon},\label{eq:def of capital N}
\end{equation}
and choose a subdivision $0=r_{0}<r_{1}<\dots<r_{N}=1$ such that
$\left|r_{i}-r_{i+1}\right|<2/N$ for each $i=0,\dots,N-1$ and such
that for each $i=0,\dots,N-1$ the following two inequalities hold \footnote{That it is possible to choose such a subdivision so that the first
inequality holds is explained in \cite[p289]{AbbondandoloSchwarz2006},
and uses the fact that both $H_{0}$ and $H_{1}$ satisfy \textbf{(H2)},
and that $M$ is compact.
}:
\begin{equation}
\begin{cases}
\left|H_{r_{i+1}}(t,q,p)-H_{r_{i}}(t,q,p)\right|\leq\varepsilon\left(1+\left|p\right|^{2}\right);\\
C_{0}(\sigma_{r_{i+1}}-\sigma_{r_{i}},g)<\varepsilon.
\end{cases}\label{eq:choosing N}
\end{equation}
Let $\beta:\mathbb{R}\rightarrow[0,1]$ denote a smooth cut-off function
such that $\beta(s)\equiv0$ for $s\leq0$ and $\beta(s)\equiv1$
for $s\geq1$, with $0\leq\beta'(s)\leq2$ for all $s\in\mathbb{R}$.
Now define:
\[
F_{s}^{i}:=H_{r_{i}}+\beta(s)(H_{r_{i+1}}-H_{r_{i}});
\]
\[
\nu_{s}^{i}:=\sigma_{r_{i}}+\beta(s)(\sigma_{r_{i+1}}-\sigma_{r_{i}});
\]
\[
f_{i}(s):=\delta(r_{i}+\beta(s)(r_{i+1}-r_{i}));
\]
\[
\omega_{s}^{i}:=d\lambda+f{}_{i}(s)\pi^{*}\nu_{s}^{i}.
\]
Note that by \eqref{eq:def of capital N}, 
\[
\max_{s\in[0,1]}\left|f_{i}'(s)\right|<2\varepsilon\ \ \ \mbox{for all }i\in\{0,1,\dots,N-1\}.
\]
Let
\[
\mathcal{A}^{i}:\Lambda_{\tau}^{\alpha}T^{*}M\rightarrow\mathbb{R}
\]
be defined by 
\[
\mathcal{A}^{i}(x):=\mathcal{A}_{H_{r_{i}},\delta(r_{i})\sigma_{r_{i}}}(x)=\mathcal{A}_{H_{r_{i}}}(x)+\mathcal{A}_{\delta(r_{i})\sigma_{r_{i}}}(\pi\circ x),
\]
and let
\[
\mathcal{A}_{s}^{i}:\Lambda_{\tau}^{\alpha}T^{*}M\rightarrow\mathbb{R}
\]
be defined by 
\[
\mathcal{A}_{s}^{i}(x):=\mathcal{A}_{F_{s}^{i},f_{i}(s)\nu_{s}^{i}}(x)=\mathcal{A}_{F_{s}^{i}}(x)+\mathcal{A}_{f_{i}(s)\nu_{s}^{i}}(\pi\circ x).
\]
Fix $\mathbf{J}=(J_{t})_{t\in\mathbb{S}_{\tau}}\subseteq\mathcal{V}_{g}$
(where $\mathcal{V}_{g}$ is as in the statement of Theorem \ref{thm:AS1}).\newline

Given $i\in\{0,1,\dots,N-1\}$ and $-\infty<a\leq b<\infty$, denote
by 
\[
\mathcal{N}_{\tau}^{\alpha}(a,b,F_{s}^{i},f_{i}(s)\nu_{s}^{i},\mathbf{J})
\]
the set of maps $u\in C^{\infty}(\mathbb{R}\times\mathbb{S}_{\tau},T^{*}M)$
that satisfy the $s$-dependent Floer equation
\[
\partial_{s}u+\nabla_{\mathbf{J}}\mathcal{A}_{s}^{i}(u)=0
\]
and which satisfy
\[
a\leq\mathcal{A}_{s}^{i}(u(s,t))\leq b\ \ \ \mbox{for all }(s,t)\in\mathbb{R}\times\mathbb{S}_{\tau}.
\]

The following statement constitutes most of the work needed to prove
Theorem \ref{thm:third statement}.
\begin{lem}
\label{lem:compact set-1}If $\varepsilon>0$ is sufficiently small
then given any $i\in\{0,1,\dots,N-1\}$ and any $-\infty<a\leq b<\infty$
there exists a compact set $K_{i}=K_{i}(a,b,\mathbf{J})\subseteq T^{*}M$
such that for all $u\in\mathcal{N}_{\tau}^{\alpha}(a,b,F_{s}^{i},f_{i}(s)\nu_{s}^{i},\mathbf{J})$
one has $u(\mathbb{R}\times\mathbb{S}_{\tau})\subseteq K_{i}.$\end{lem}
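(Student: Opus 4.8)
The plan is to imitate the proof of Theorem \ref{thm:AS1} given in Section \ref{sec:Proofs}, applied to the $s$-dependent Floer equation $\partial_s u + \nabla_{\mathbf{J}}\mathcal{A}_s^i(u) = 0$, and to make sure that all the constants that appear can be controlled uniformly in $s$ once $\varepsilon$ is small. Since the difficult part of Theorem \ref{thm:AS1} (its ``second stage'', the Calderon--Zygmund/interpolation argument) goes through verbatim once one has the $W^{1,2}$-bound \eqref{eq:first stage} on the $p$-component, it again suffices to establish the analogue of \eqref{eq:first stage} for elements of $\mathcal{N}_\tau^\alpha(a,b,F_s^i,f_i(s)\nu_s^i,\mathbf{J})$, with a constant $R$ independent of $u$ (but possibly depending on $i$, $a$, $b$, $\mathbf{J}$). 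As in \cite{AbbondandoloSchwarz2006}, this reduces to the six claims constituting Lemma~1.12 there; everything but Claims~1 and~2 is insensitive to the magnetic perturbation and to the $s$-dependence, so the real work is to prove the $s$-dependent analogues of Lemma \ref{lem:claim 1} and Lemma \ref{lem:claim 2}.

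First I would redo Lemma \ref{lem:claim 1}. For the $s$-dependent equation one has the standard identity
\[
\frac{d}{ds}\mathcal{A}_s^i(u(s,\cdot)) = -\left\Vert\partial_s u(s,\cdot)\right\Vert_{L^2_{G_{J_t}}(\mathbb{S}_\tau)}^2 + \int_0^\tau (\partial_s\mathcal{A}_s^i)(u(s,\cdot))\,dt,
\]
where $\partial_s \mathcal{A}_s^i$ records the $s$-derivatives of $F_s^i$, of $\nu_s^i$ and of $f_i(s)$. Using $0\le\beta'\le 2$, Condition \textbf{(H2)} to bound $\partial_s F_s^i = \beta'(s)(H_{r_{i+1}}-H_{r_i})$ pointwise by $C\varepsilon(1+|p|^2)$ (via \eqref{eq:choosing N}), the bound $|f_i'(s)|<2\varepsilon$, and Lemma \ref{lem:BF-2} applied to the closed $2$-form $\nu_s^i$ and to $\sigma_{r_{i+1}}-\sigma_{r_i}$ (whose constant $C_0$ is $<\varepsilon$ by \eqref{eq:choosing N}) to bound the magnetic terms by $\varepsilon\big(C(\int_0^\tau|\dot q|\,dt)^2 + C\big)$, one integrates over $s\in[0,1]$ (the integrand vanishes outside $[0,1]$ since $\beta$ is constant there) to get an $L^2$-bound on $\partial_s u$ over $\mathbb{R}\times\mathbb{S}_\tau$ in terms of $a$, $b$, $\mathbf{J}$ and an $L^1$-bound on $\dot q$, which one already has along the $W^{1,2}$-route; more carefully one feeds this back together with Claim~2 below.

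Next I would redo Lemma \ref{lem:claim 2}, which is the main obstacle. Repeating its proof with $H$ replaced by $F_s^i$, $\sigma$ by $\nu_s^i$ and $\delta$ by $f_i(s)$, the key positivity inequality becomes
\[
\eta_1(F_s^i,g) - 2|f_i(s)|\tau C_0(\nu_s^i,g)\,\eta_2(F_s^i,g) > 0 \quad\text{for all }s,
\]
and the point is that, since $F_s^i$ and $\nu_s^i$ interpolate along $\beta$ between the endpoints $(H_{r_i},\sigma_{r_i})$ and $(H_{r_{i+1}},\sigma_{r_{i+1}})$, convexity/continuity of $\eta_1$, $\eta_2$, $C_0$ in these data together with \eqref{eq:the key to positivity} and the smallness built into \eqref{eq:choosing N} forces this quantity to stay bounded below by, say, $\chi/2$ uniformly in $s\in[0,1]$ and $i$, provided $\varepsilon$ was chosen small enough at the outset. (One uses $\eta_1(F_s^i,g)\ge\min(\eta_1(r_i),\eta_1(r_{i+1})) - C\varepsilon$, $\eta_2(F_s^i,g)\le\eta_2+C\varepsilon$, $C_0(\nu_s^i,g)\le C_0(\sigma_{r_i},g)+C_0(\sigma_{r_{i+1}}-\sigma_{r_i},g)\le C_0+\varepsilon$, and $|f_i(s)|\le d$; the subadditivity of $C_0$ in $\sigma$ is visible from its definition $C_0 = (2+d)\Theta$ in Lemma \ref{lem:BF-2} since a primitive of $\widetilde{\sigma}_{r_i} + \widetilde{(\sigma_{r_{i+1}}-\sigma_{r_i})}$ of at most linear growth is obtained by adding primitives.) With this uniform positivity in hand, the Cauchy--Schwarz / Young's-inequality manipulations in the proof of Lemma \ref{lem:claim 2} go through with $s$-independent constants, yielding $\left\Vert p(s,\cdot)\right\Vert_{L^2_g(\mathbb{S}_\tau)}\le S(1 + \left\Vert\partial_s u(s,\cdot)\right\Vert_{L^2_{G_g}(\mathbb{S}_\tau)})$ with $S$ depending only on $i$, $a$, $\mathbf{J}$.

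Finally, with the analogues of Claims~1 and~2 established uniformly in $s$, the remaining four claims of \cite[Lemma~1.12]{AbbondandoloSchwarz2006} and the ``second stage'' (Theorem~1.14 there) apply without change — noting that the uniform tameness needed throughout holds because $\mathbf{J}\subseteq\mathcal{V}_g$ and $g\in\mathcal{R}_{\sigma_0}(M)\cap\mathcal{R}_{\sigma_1}(M)$ implies, by convexity of the condition \eqref{eq:nice} in $\sigma$, that $g\in\mathcal{R}_{\nu_s^i}(M)$ for all $s$ and $i$, so Lemma \ref{lem:uniformly tame-1}.3 gives $\omega_{\nu_s^i}(J\xi,\xi)\ge\tfrac14 G_J(\xi,\xi)$ throughout. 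This produces the $W^{1,2}$-bound \eqref{eq:first stage} for $\mathcal{N}_\tau^\alpha(a,b,F_s^i,f_i(s)\nu_s^i,\mathbf{J})$ and hence the desired compact set $K_i$. The main obstacle, as indicated, is bookkeeping the uniform-in-$s$ positivity of $\eta_1(F_s^i,g) - 2|f_i(s)|\tau C_0(\nu_s^i,g)\eta_2(F_s^i,g)$; once the right smallness conditions on $\varepsilon$ (and hence on $N$ and the subdivision $\{r_i\}$) are pinned down via \eqref{eq:choosing N} and the subadditivity of $C_0$, the rest is a routine transcription of Section \ref{sec:Proofs}.
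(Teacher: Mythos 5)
Your overall architecture matches the paper's: reduce to the $s$-dependent analogues of Lemma \ref{lem:claim 1} and Lemma \ref{lem:claim 2}, check that the constants are uniform in $s$ and $i$, and then quote the unchanged parts of \cite{AbbondandoloSchwarz2006}. However, you have misplaced the difficulty, and the step you gloss over is the one the hypothesis ``$\varepsilon$ sufficiently small'' is really about. First, the ``uniform-in-$s$ positivity'' that you present as the main obstacle is essentially automatic: since $H_{s}$ and $\sigma_{s}$ are affine in $s$, the interpolants satisfy $F_{s}^{i}=H_{r(s)}$, $\nu_{s}^{i}=\sigma_{r(s)}$ and $f_{i}(s)=\delta(r(s))$ with $r(s)=r_{i}+\beta(s)(r_{i+1}-r_{i})\in[0,1]$, so the inequality \eqref{eq:the key to positivity}, which is arranged once and for all before the lemma, applies verbatim and yields the uniform lower bound $\chi$ with no need for subadditivity of $C_{0}$ or continuity of $\eta_{1},\eta_{2}$ in the data.

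Second, and more seriously, the genuine difficulty is that for the $s$-dependent equation the action is no longer monotone along $u$, so the energy identity only gives $\left\Vert \partial_{s}u\right\Vert _{L_{G_{g}}^{2}(\mathbb{R}\times\mathbb{S}_{\tau})}^{2}\leq4T^{2}(b-a+\Delta(u))$ and $\sup_{s}\mathcal{A}_{s}^{i}(u(s,\cdot))\leq b+\Delta(u)$, where $\Delta(u)$ is the total action drift. Your bound for $\Delta(u)$ in terms of ``an $L^{1}$-bound on $\dot{q}$, which one already has along the $W^{1,2}$-route'' is circular: that $L^{1}$-bound on $\partial_{t}q$ is obtained, via \textbf{(H2)}, precisely from the $L^{2}$-bounds on $p$ and $\partial_{s}u$ that you are trying to establish. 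The paper resolves this by keeping all three quantities symbolic: one shows $\Delta(u)\leq c_{1}\left\Vert p\right\Vert _{L_{g}^{2}([0,1]\times\mathbb{S}_{\tau})}^{2}+c_{2}\left\Vert \partial_{s}u\right\Vert _{L_{G_{g}}^{2}([0,1]\times\mathbb{S}_{\tau})}^{2}+c_{3}$ with $c_{1},c_{2}=O(\varepsilon)$, and then the Claim-2-type lower bound on the action, integrated over $[0,1]$ and combined with the energy bound, produces a closed system of inequalities whose coefficient of $\left\Vert p\right\Vert _{L_{g}^{2}([0,1]\times\mathbb{S}_{\tau})}^{2}$ is $\chi/2-O(\varepsilon)$. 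It is here --- in the conditions $4T^{2}c_{2}\leq1/2$ and $c_{4}>\chi/4$ --- that the smallness of $\varepsilon$ is actually consumed, not in the positivity \eqref{eq:the key to positivity}. You say ``more carefully one feeds this back together with Claim 2'' but never verify that the feedback loop closes; that verification is the substance of the proof. Your observation that $g\in\mathcal{R}_{\nu_{s}^{i}}(M)$ by convexity of \eqref{eq:nice} in $\sigma$ is a correct and worthwhile point that the paper leaves implicit.
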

\begin{proof}
Fix $i\in\{0,1,\dots,N-1\}$, and fix $u=(q,p)\in\mathcal{N}_{\tau}^{\alpha}(a,b,F_{s}^{i},f_{i}(s)\nu_{s}^{i},\mathbf{J})$.
Firstly, note that by Lemma \ref{lem:BF-2} we have that for all $s\in\mathbb{R}$,
\begin{equation}
\left|\mathcal{A}_{\nu_{s}^{i}}(q(s,\cdot))\right|\leq C_{0}\left(\int_{0}^{\tau}\left|\partial_{t}q(s,\cdot)\right|dt\right)^{2}+C_{1};\label{eq:1}
\end{equation}
\begin{equation}
\left|\mathcal{A}_{(\sigma_{r_{i+1}}-\sigma_{r_{i}})}(q(s,\cdot))\right|\leq\varepsilon\left(\int_{0}^{\tau}\left|\partial_{t}q(s,\cdot)\right|dt\right)^{2}+C_{2},\label{eq:2}
\end{equation}
for some constant $C_{2}>0$, where the second equation used \eqref{eq:choosing N}. 

The key term we wish to estimate is:
\[
\Delta(u):=\int_{-\infty}^{\infty}\left|\left(\frac{\partial}{\partial s}\mathcal{A}_{s}^{i}\right)(u(s,\cdot))\right|ds.
\]
We compute
\begin{align*}
\left|\left(\frac{\partial}{\partial s}\mathcal{A}_{s}^{i}\right)(u(s,\cdot))\right| & =\left|-\int_{0}^{1}\left(\frac{\partial}{\partial s}F_{s}^{i}\right)(u(s,t))dt+\frac{\partial}{\partial s}\mathcal{A}_{f_{i}(s)\nu_{s}^{i}}(q(s,\cdot))\right|\\
 & \leq\beta'(s)\int_{0}^{1}\left|(H_{r_{i+1}}-H_{r_{i}})(t,u)\right|dt+\left|\frac{\partial}{\partial s}\mathcal{A}_{f_{i}(s)\nu_{s}^{i}}(q(s,\cdot))\right|.
\end{align*}
We can estimate the first term from \eqref{eq:choosing N} by
\[
\beta'(s)\int_{0}^{1}\left|(H_{r_{i+1}}-H_{r_{i}})(t,u)\right|dt\leq2\varepsilon\left(1+\left\Vert p(s,\cdot)\right\Vert _{L_{g}^{2}(\mathbb{S}_{\tau})}^{2}\right)
\]
 As for the second term, we compute using \eqref{eq:1} and \eqref{eq:2}
that
\begin{align*}
\left|\frac{\partial}{\partial s}\mathcal{A}_{f_{i}(s)\nu_{s}^{i}}(u(s,\cdot))\right| & =\left|f'_{i}(s)\mathcal{A}_{\nu_{s}^{i}}(q(s,\cdot))+\beta'(s)f_{i}(s)\mathcal{A}_{(\sigma_{r_{i+1}}-\sigma_{r_{i}})}(q(s,\cdot))\right|\\
 & \leq2\varepsilon\left(C_{0}\left(\int_{0}^{\tau}\left|\partial_{t}q(s,\cdot)\right|dt\right)^{2}+C_{1}\right)+2d\left(\varepsilon\left(\int_{0}^{\tau}\left|\partial_{t}q(s,\cdot)\right|dt\right)^{2}+C_{2}\right)\\
 & \leq2\varepsilon(C_{0}+d)\left(\int_{0}^{\tau}\left|\partial_{t}q(s,\cdot)\right|dt\right)^{2}+2\varepsilon C_{1}+2dC_{2}.
\end{align*}
Arguing as in the proof of Lemma \ref{lem:claim 2}, we have
\[
\left(\int_{0}^{\tau}\left|\partial_{t}q(s,\cdot)\right|dt\right)^{2}\leq2\tau T^{2}\left\Vert \partial_{s}u(s,\cdot)\right\Vert _{L_{G_{g}}^{2}(\mathbb{S}_{\tau})}^{2}+2\tau\eta_{2}\left\Vert p(s,\cdot)\right\Vert _{L_{g}^{2}(\mathbb{S}_{\tau})}^{2}+2\tau k_{2},
\]
where as before, 
\[
T:=\sup_{t\in\mathbb{S}_{\tau}}\left\Vert J_{t}\right\Vert _{L_{G_{g}}^{\infty}}.
\]
Thus
\begin{align*}
\left|\frac{\partial}{\partial s}\mathcal{A}_{f_{i}(s)\nu_{s}^{i}}(q	(s,\cdot))\right| & \leq4\varepsilon(C_{0}+d)\tau T^{2}\left\Vert \partial_{s}u(s,\cdot)\right\Vert _{L_{G_{g}}^{2}(\mathbb{S}_{\tau})}^{2}+4\varepsilon(C_{0}+d)\tau\eta_{2}\left\Vert p(s,\cdot)\right\Vert _{L_{g}^{2}(\mathbb{S}_{\tau})}^{2}.\\
 & +4\varepsilon(C_{0}+d)\tau k_{2}+2\varepsilon C_{1}+2dC_{2}.
\end{align*}
 Putting this together and integrating we conclude 
\begin{align*}
\Delta(u) & \leq\underset{:=c_{1}}{\underbrace{(2\varepsilon+4\varepsilon(C_{0}+d)\tau\eta_{2})}}\left\Vert p\right\Vert _{L_{g}^{2}([0,1]\times\mathbb{S}_{\tau})}^{2}+\underset{:=c_{2}}{\underbrace{4\varepsilon(C_{0}+d)\tau T^{2}}}\left\Vert \partial_{s}u\right\Vert _{L_{G_{g}}^{2}([0,1]\times\mathbb{S}_{\tau})}^{2}\\
 & +\underset{:=c_{3}}{\underbrace{2\varepsilon+4\varepsilon(C_{0}+d)\tau k_{2}+2\varepsilon C_{1}+2dC_{2}}}.
\end{align*}
Arguing as in Lemma \ref{lem:claim 1} we have 
\begin{align*}
\left\Vert \partial_{s}u\right\Vert _{L_{G_{g}}^{2}(\mathbb{R}\times\mathbb{S}_{\tau})}^{2} & \leq4T^{2}(b-a+\Delta(u))\\
 & \leq4T^{2}c_{1}\left\Vert p\right\Vert _{L_{g}^{2}([0,1]\times\mathbb{S}_{\tau})}^{2}+4T^{2}c_{2}\left\Vert \partial_{s}u\right\Vert _{L_{G_{g}}^{2}([0,1]\times\mathbb{S}_{\tau})}^{2}+4T^{2}(b-a+c_{3}),
\end{align*}
and thus provided $\varepsilon>0$ is small enough such that 
\[
4T^{2}c_{2}\leq\frac{1}{2},
\]
 we conclude that 
\begin{equation}
\left\Vert \partial_{s}u\right\Vert _{L_{G_{g}}^{2}(\mathbb{R}\times\mathbb{S}_{\tau})}^{2}\leq8T^{2}c_{1}\left\Vert p\right\Vert _{L_{g}^{2}([0,1]\times\mathbb{S}_{\tau})}^{2}+8T^{2}(b-a+c_{3}).\label{eq:energy-1}
\end{equation}
Similarly one has 
\begin{align}
\sup_{s\in\mathbb{R}}\mathcal{A}_{s}^{i}(u(s,\cdot)) & \leq b+\Delta(u)\nonumber \\
 & \leq b+c_{1}\left\Vert p\right\Vert _{L_{g}^{2}([0,1]\times\mathbb{S}_{\tau})}^{2}+c_{2}\left\Vert \partial_{s}u\right\Vert _{L_{G_{g}}^{2}([0,1]\times\mathbb{S}_{\tau})}^{2}+c_{3}.\label{eq:action-1}
\end{align}
Arguing as in the proof of Lemma \ref{lem:claim 2} we discover that
\begin{multline*}
c_{1}\left\Vert p\right\Vert _{L_{g}^{2}([0,1]\times\mathbb{S}_{\tau})}^{2}+c_{2}\left\Vert \partial_{s}u\right\Vert _{L_{G_{g}}^{2}([0,1]\times\mathbb{S}_{\tau})}^{2}+c_{3}+b\geq\mathcal{A}_{s}^{i}(u(s,\cdot))\\
=\int_{0}^{\tau}( \lambda(X_{F_{s}^{i},f_{i}(s)\nu_{s}^{i}}(t,u))-F_{s}(t,u))dt+\mathcal{A}_{f_{i}(s)\nu_{s}^{i}}(q(s,\cdot))\\
\overset{(*)}{\geq}(\chi-\mu)\left\Vert p(s,\cdot)\right\Vert _{L_{g}^{2}(\mathbb{S}_{\tau})}^{2}-\left(2\left|\delta\right|C_{0}\tau T^{2}+\frac{1}{4\mu}T^{2}\right)\left\Vert \partial_{s}u(s,\cdot)\right\Vert _{L_{G_{g}}^{2}(\mathbb{S}_{\tau})}^{2}-k_{1}\tau-d(2C_{0}\tau k_{2}+C_{1}),
\end{multline*}
where $\mu>0$ is any positive number and $(*)$ used \eqref{eq:the key to positivity}.
Take $\mu=\chi/2$. Integrating this expression over $[0,1]$ and
rearranging gives 
\begin{align*}
\left(\frac{\chi}{2}-c_{1}\right)\left\Vert p\right\Vert _{L_{g}^{2}([0,1]\times\mathbb{S}_{\tau})}^{2} & \leq\left(c_{2}+\left(2\left|\delta\right|C_{0}\tau T^{2}+\frac{1}{2\chi}T^{2}\right)\right)\left\Vert \partial_{s}u\right\Vert _{L_{G_{g}}^{2}(\mathbb{R}\times\mathbb{S}_{\tau})}^{2}\\
 & +b+c_{3}+k_{1}\tau+d(2C_{0}\tau k_{2}+C_{1}).
\end{align*}
Substituting in the expression \eqref{eq:energy-1} for $\left\Vert \partial_{s}u\right\Vert _{L_{G_{g}}^{2}(\mathbb{R}\times\mathbb{S}_{\tau})}^{2}$
we obtain 
\begin{multline*}
\underset{:=c_{4}}{\underbrace{\left(\frac{\chi}{2}-c_{1}-8T^{2}c_{1}\left(c_{2}+\left(2\left|\delta\right|C_{0}\tau T^{2}+\frac{1}{2\chi}T^{2}\right)\right)\right)}}\left\Vert p\right\Vert _{L_{g}^{2}([0,1]\times\mathbb{S}_{\tau})}^{2}\\
\leq\underset{:=c_{5}}{\underbrace{8T^{2}(b-a+c_{3})\left(c_{2}+\left(2\left|\delta\right|C_{0}\tau T^{2}+\frac{1}{2\chi}T^{2}\right)\right)+b+c_{3}+k_{1}\tau+d(2C_{0}\tau k_{2}+C_{1})}.}
\end{multline*}
We can choose $\varepsilon>0$ sufficiently small such%
\footnote{Here of course it is important to note that this choice can be made
\textbf{independently }of both $N$ and $i$.%
} that $c_{4}>\chi/4$. Assuming this is so, we have proved that for
any $u=(q,p)\in\mathcal{N}_{\tau}^{\alpha}(a,b,F_{s}^{i},f_{i}(s)\nu_{s}^{i},\mathbf{J})$
one has
\[
\left\Vert p\right\Vert _{L_{g}^{2}([0,1]\times\mathbb{S}_{\tau})}^{2}\leq\frac{4c_{5}}{\chi}.
\]
Feeding this into \eqref{eq:energy-1} and \eqref{eq:action-1} we
find constants $c_{6},c_{7}>0$ such that for all such maps $u$,
\[
\left\Vert \partial_{s}u\right\Vert _{L_{G_{g}}^{2}(\mathbb{R}\times\mathbb{S}_{\tau})}\leq c_{6},\ \ \ \sup_{s\in\mathbb{R}}\mathcal{A}_{s}^{i}(u(s,\cdot))\leq c_{7}.
\]
This proves the analogue of Lemma \ref{lem:claim 1}, and allows us
to prove the analogue of Lemma \ref{lem:claim 2}, for elements of
$\mathcal{N}_{\tau}^{\alpha}(a,b,F_{s}^{i},f_{i}(s)\nu_{s}^{i},\mathbf{J})$.
We can proceed exactly as in the proof of Theorem \ref{thm:AS1} to
obtain the desired compact set $K_{i}$. This completes the proof
of Lemma \ref{lem:compact set-1}.
\end{proof}
Armed with Lemma \ref{lem:compact set-1}, the proof of Theorem \ref{thm:third statement}
is very standard. 
\begin{proof}
\emph{(of Theorem \ref{thm:third statement})}

Fix $N\in\mathbb{N}$ such that there exists a subdivision $0=r_{0}<r_{1}<\dots<r_{N}=1$
with the property that \eqref{eq:choosing N} holds for some $\varepsilon>0$
small enough such that Lemma \ref{lem:compact set-1} holds for each
$i=0,1,\dots,N-1$. After possibly making additional arbitrarily small perturbations
of $H_{s}$ for $s$ near $r_{i}$, for each $i=1,2,\dots,N-1$ (which
for simplicity we omit from our notation), we may assume that $(H_{r_{i}},\delta(r_{i})\sigma_{r_{i}},\alpha)$
satisfies Condition \textbf{(N) }for each $i=0,1,\dots,N$. 

Under these assumptions we define for each $i=0,1,\dots,N-1$ a continuation
map 
\[
\Psi_{i}(\mathbf{J}):CF_{*}^{\alpha}(H_{r_{i}},\delta(r_{i})\sigma_{r_{i}},\tau)\rightarrow CF_{*}^{\alpha}(H_{r_{i+1}},\delta(r_{i+1})\sigma_{r_{i+1}},\tau)
\]
by
\[
\Psi_{i}(\mathbf{J})(x):=\sum_{y\in\mathcal{P}_{\tau}^{\alpha}(H_{r_{i+1}},\delta(r_{i+1})\sigma_{r_{i+1}})_{k}}n_{i}(x,y)y,\ \ \ x\in\mathcal{P}_{\tau}^{\alpha}(H_{r_{i}},\delta(r_{i})\sigma_{r_{i}})_{k},
\]
where
\[
n_{i}(x,y):=\#_{2}\mathcal{N}_{\tau}^{\alpha}(x,y,F_{s}^{i},f_{i}(s)\nu_{s}^{i},\mathbf{J}),
\]
and $\mathcal{N}_{\tau}^{\alpha}(x,y,F_{s}^{i},f_{i}(s)\nu_{s}^{i},\mathbf{J})$
denotes the (finite) set of maps $u:\mathbb{R}\times\mathbb{S}_{\tau}\rightarrow T^{*}M$
satisfying 
\[
\partial_{s}u+\nabla_{\mathbf{J}}\mathcal{A}_{s}^{i}(u)=0,
\]
and which submit to the asymptotic conditions
\[
\lim_{s\rightarrow\infty}u(s,t)=x(t),\ \ \ \lim_{s\rightarrow-\infty}u(s,t)=y(t),\ \ \ \lim_{s\rightarrow\pm\infty}\partial_{s}u(s,t)=0.
\]
Standard Floer-theoretical arguments (see for instance \cite{Salamon1999})
tell us that the $\Psi_{i}(\mathbf{J})$ are chain maps that induce
isomorphisms 
\[
\psi_{i}:HF_{*}^{\alpha}(H_{r_{i}},\delta(r_{i})\sigma_{r_{i}},\tau)\rightarrow HF_{*}^{\alpha}(H_{r_{i+1}},\delta(r_{i+1})\sigma_{r_{i+1}},\tau)
\]
for $i=0,1,\dots,N-1$ on homology. The chain map $\Psi$ from the
statement of the theorem is then defined as the composition 
\[
\Psi:=\Psi_{N-1}(\mathbf{J})\circ\dots\circ\Psi_{1}(\mathbf{J})\circ\Psi_{0}(\mathbf{J}).
\]

\end{proof}
\appendix

\section{The Lagrangian Framework}

In this Appendix we outline an alternative approach to obtaining some
of the results of this paper \textbf{without} using the machinery of Floer
homology. Roughly speaking, this method can be used to recover all
of the results proved in this paper for a more restricted class of
Hamiltonian systems: the so-called \textbf{convex quadratic growth
Hamiltonians}, which are those Hamiltonians $H:\mathbb{S}_{\tau}\times T^{*}M\rightarrow\mathbb{R}$
which satisfy the Abbondandolo-Schwarz growth conditions and \textbf{in
addition }are \textbf{strictly fibrewise convex}. 

Given such a Hamiltonian $H$, the idea is to study the \textbf{Lagrangian
action functional }$\mathcal{S}_{L,\delta\sigma}$ on the atoroidal
components of the (completed) $\tau$-periodic loop space of $M$,
where $L$ is the \textbf{Fenchel dual Lagrangian }of $H$. The key
point is to show that (for $\tau\left|\delta\right|$ sufficiently
small), the functional $\mathcal{S}_{L,\delta\sigma}$ satisfies the
\textbf{Palais-Smale condition}, and this allows one to construct
the \textbf{Morse complex }of $\mathcal{S}_{L,\delta\sigma}$.

\subsection{\label{sub:The-Lagrangian-action}The Lagrangian action functional}

Fix a Riemannian metric $g$ on $M$. Suppose $L\in C^{\infty}(TM,\mathbb{R})$.
Then $dL(q,v)\in T_{(q,v)}^{*}TM$, and thus its gradient $\nabla L(q,v)$
(with respect to the $G_{g}$-metric on $TM$) lies in $T_{(q,v)}TM$.
Thus we can speak of the horizontal and vertical components 
\[
\nabla_{q}L(q,v):=\nabla L(q,v)^{h}\in T_{q}M;
\]
\[
\nabla_{v}L(q,v):=\nabla L(q,v)^{v}\in T_{q}M.
\]
Thinking of $\nabla_{q}L$ as a map $TM\rightarrow TM$ (so its derivative
is a map $d(\nabla_{q}L):TTM\rightarrow TTM$), we define
\[
\nabla_{qq}L(q,v)(w):=d(\nabla_{q}L)(q,v)(\xi_{w})^{v},
\]
where $\xi_{w}\in T_{(q,v)}TM$ is the unique vector such that $\xi_{w}^{h}=w$
and $\xi_{w}^{v}=0$. Similarly we define 
\[
\nabla_{qv}L(q,v)(w):=d(\nabla_{q}L)(q,v)(\zeta_{w})^{v},
\]
where this time $\zeta_{w}\in T_{(q,v)}TM$ is the unique vector such
that $\zeta_{w}^{h}=0$ and $\zeta_{w}^{v}=w$. We define maps $\nabla_{qv}L$
and $\nabla_{vv}L$ in exactly the same way, starting with $\nabla_{v}L$
instead of $\nabla_{q}L$. Note that the operator $\nabla_{vv}L(q,v):T_{q}M\rightarrow T_{q}M$
coincides with the second derivative of the map $v\mapsto L(q,v)$
in the vector space $T_{q}M$. If $L$ is time-dependent then these
notations still make sense, with $\nabla_{qq}L(t,q,v):=\nabla_{qq}L_{t}(q,v)$
etc., where $L_{t}(q,v):=L(t,q,v)$.\newline

We will be interested in time-dependent Lagrangians $L\in C^{\infty}(\mathbb{S}_{\tau}\times TM,\mathbb{R})$
that satisfy the following \textbf{convex quadratic growth conditions}:
\begin{lyxlist}{00.00.0000}
\item [{\textbf{(L1)}}] There exists $\ell_{1}>0$ such that for all $(t,q,v)\in\mathbb{S}_{\tau}\times TM$
it holds that 
\[
\nabla_{vv}L(t,q,v)\geq\ell_{1}\mathbb{1}.
\]

\item [{\textbf{(L2)}}] There exists $\ell_{2}>0$ such that for all $(t,q,v)\in\mathbb{S}_{\tau}\times TM$
it holds that 
\[
\left|\nabla_{vv}L(t,q,v)\right|\leq\ell_{2},\ \left|\nabla_{vq}L(t,q,v)\right|\leq\ell_{2}(1+\left|v\right|),\ \left|\nabla_{qq}L(t,q,v)\right|\leq\ell_{2}(1+\left|v\right|^{2}).
\]

\end{lyxlist}
Whilst the constants $\ell_{1}$ and $\ell_{2}$ depend on the choice
of metric $g$ on $M$, the existence of such constants does not (see
\cite[Proposition 3.3.1]{Mazzucchelli2011}). Note that the assumption
\textbf{(L1) }implies that $\nabla_{v}L(t,q,\cdot):T_{q}M\rightarrow T_{q}^{*}M$
is a diffeomorphism for each $(t,q)\in\mathbb{S}_{\tau}\times M$,
and hence we may define the \textbf{Fenchel dual Hamiltonian} $H\in C^{\infty}(\mathbb{S}_{\tau}\times T^{*}M,\mathbb{R})$
by 
\begin{equation}
H(t,q,p):=p(v)-L(t,q,v),\ \ \ \mbox{where }\nabla_{v}L(t,q,v)=p.\label{eq:legendre}
\end{equation}
It is not hard to check that asking $L$ to satisfy \textbf{(L1) }and
\textbf{(L2) }implies that $H$ satisfies the Abbondandolo-Schwarz
growth conditions \textbf{(H1) }and \textbf{(H2)}. Going the other
way round, if $H\in C^{\infty}(\mathbb{S}_{\tau}\times T^{*}M,\mathbb{R})$
satisfies \textbf{(H1) }and \textbf{(H2) }and\textbf{ }in addition
is \textbf{strictly fibrewise convex}, then there is a unique Lagrangian
$L\in C^{\infty}(\mathbb{S}_{\tau}\times TM,\mathbb{R})$ called the
\textbf{Fenchel dual Lagrangian }of $H$ for which $\nabla_{v}L(t,q,\cdot)$
is a diffeomorphism for each $(t,q)\in\mathbb{S}_{\tau}\times M$,
and which is related to $H$ by \eqref{eq:legendre}. Moreover, this
Lagrangian $L$ satisfies \textbf{(L1) }and \textbf{(L2)}.\newline 

Denote by $\mathcal{L}_{\tau}M:=W^{1,2}(\mathbb{S}_{\tau},M)$ the
\textbf{Sobolev completion }of the free loop space $\Lambda_{\tau}M=C^{\infty}(\mathbb{S}_{\tau},M)$,
and as before given $\alpha\in[S^{1},M]$ denote by $\mathcal{L}_{\tau}^{\alpha}M$
the component of $\mathcal{L}_{\tau}M$ belonging to $\alpha$. Unlike
$\Lambda_{\tau}M$, the space $\mathcal{L}_{\tau}M$ carries the structure
of a Hilbert manifold, and therefore is much better suited for doing
Morse homology on. As before we denote by $\left\Vert \cdot\right\Vert _{W_{g}^{1,2}(\mathbb{S}_{\tau})}$
the $W_{g}^{1,2}$-metric on $\mathcal{L}_{\tau}^{\alpha}M$.

In this Appendix we study the \textbf{Lagrangian action functional
}$\mathcal{S}_{L,\sigma}:\mathcal{L}_{\tau}^{\alpha}M\rightarrow\mathbb{R}$
associated to a Lagrangian $L$ satisfying \textbf{(L1) }and \textbf{(L2)},
together with a 2-form $ $$\sigma$ satisfying \textbf{(}$\boldsymbol{\sigma}$\textbf{1)}
on a $\sigma$-atoroidal class $\alpha\in[S^{1},M]$. As with the
Hamiltonian action functional $\mathcal{A}_{H,\sigma}$, the Lagrangian
action functional $\mathcal{S}_{L,\sigma}$ is defined as the sum
\[
\mathcal{S}_{L,\sigma}(q):=\mathcal{S}_{L}(q)+\mathcal{A}_{\sigma}(q),
\]
where $\mathit{\mathcal{S}_{L}}$ is the \textbf{standard }Lagrangian
action functional

\[
\mathcal{S}_{L}(q):=\int_{0}^{\tau}L(t,q(t),\dot{q}(t))dt
\]
(note $\mathcal{S}_{L}$ is defined on all of $\mathcal{L}_{\tau}M$),
and $\mathcal{A}_{\sigma}$ is defined as before (only now on the
completed loop space $\mathcal{L}_{\tau}^{\alpha}M$). 

A standard computation (which does not use assumptions \textbf{(L1)
}and \textbf{(L2)} and only requires that $\alpha$ is a $\sigma$-atoroidal
class) tells us that if $q\in\mathcal{L}_{\tau}^{\alpha}M$ and $(q_{s})_{s\in(-\varepsilon,\varepsilon)}\subseteq\mathcal{L}_{\tau}^{\alpha}M$
is a variation of $q$ with $\frac{\partial}{\partial s}\bigl|_{s=0}q_{s}(t)=:\xi(t)$
then, writing $Y=Y_{\sigma,g}$ for the Lorentz force defined in Section \ref{sub:Preliminaries}, we have:
\begin{equation}
\frac{\partial}{\partial s}\Bigl|_{s=0}\mathcal{S}_{L,\sigma}(q_{s})=\int_{0}^{\tau}\left\langle \nabla_{q}L(t,q,\dot{q}),\xi\right\rangle +\left\langle \nabla_{v}L(t,q,\dot{q}),\nabla_{t}\xi\right\rangle +\left\langle Y(q)\dot{q},\xi\right\rangle dt,\label{eq:differential}
\end{equation}
which we can rewrite as 
\[
\frac{\partial}{\partial s}\Bigl|_{s=0}\mathcal{S}_{L,\sigma}(q_{s})=\int_{0}^{\tau}\left\langle \nabla_{q}L(t,q,\dot{q})-\nabla_{t}(\nabla_{v}L(t,q,\dot{q}))+Y(q)\dot{q},\xi\right\rangle dt.
\]
Thus $\frac{\partial}{\partial s}\Bigl|_{s=0}\mathcal{S}_{L,\sigma}(q_{s})=0$
for all such variations $q_{s}$ if and only if $q$ satisfies the
\textbf{Euler-Lagrange equations }
\begin{equation}
\nabla_{q}L(t,q,\dot{q})-\nabla_{t}(\nabla_{v}L(t,q,\dot{q}))+Y(q)\dot{q}=0.\label{eq:EL}
\end{equation}
Since $\nabla_{vv}L(t,q,v)$ is invertible by \textbf{(L1)}, we can
rewrite this as 
\[
\nabla_{t}\dot{q}=[\nabla_{vv}L(t,q,\dot{q})]^{-1}\left(\nabla_{q}L(t,q,\dot{q})-\nabla_{qv}L(t,q,\dot{q})\dot{q}+Y(q)\dot{q}\right).
\]

In the special case $\sigma=0$, the following theorem is due Abbondandolo
and Schwarz \cite{AbbondandoloSchwarz2009a} (see also \cite[Proposition 3.4.1]{Mazzucchelli2011}
for a detailed proof). However a careful inspection of their proof
reveals that everything still goes through in our setting.
\begin{prop}
\label{prop:morse index}Let $\sigma\in\Omega^{2}(M)$ denote a closed
2-form and $\alpha\in[S^{1},M]$ a $\sigma$-atoroidal class, and
let $L\in C^{\infty}(\mathbb{S}_{\tau}\times TM,\mathbb{R})$ satisfy
\textbf{\emph{(L1) }}and \textbf{\emph{(L2)}}. Then $\mathcal{S}_{L,\sigma}:\mathcal{L}_{\tau}^{\alpha}M\rightarrow\mathbb{R}$
is of class $C^{1}$, and its differential $d\mathcal{S}_{L,\sigma}$
is G\^ateau differentiable and locally Lipschitz continuous. Moreover
its critical points are precisely the (smooth) solutions of the Euler-Lagrange
equation \eqref{eq:EL}, and the second G\^ateau differential $d^{2}\mathcal{S}_{L,\sigma}(q)$
at a critical point $q$ is a Fredholm operator of finite Morse index.
\end{prop}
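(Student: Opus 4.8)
The plan is to write $\mathcal{S}_{L,\sigma}=\mathcal{S}_L+\mathcal{A}_\sigma$ and to treat the two summands separately: the standard Lagrangian action $\mathcal{S}_L$ is exactly the functional analysed by Abbondandolo and Schwarz \cite{AbbondandoloSchwarz2009a} (see also \cite[Proposition 3.4.1]{Mazzucchelli2011}), whose properties under \textbf{(L1)} and \textbf{(L2)} we quote verbatim, whereas $\mathcal{A}_\sigma$ is a \emph{smooth} and, in a precise sense, lower-order perturbation. First I would record that, since $\alpha$ is $\sigma$-atoroidal, the primitive $\mathcal{A}_\sigma$ of the closed $1$-form $a_\sigma$ of \eqref{eq:a sigma} (cf. \eqref{eq:why suwhy est}) is well defined on all of $\mathcal{L}_\tau^\alpha M$, and that it is of class $C^\infty$: working in a chart, its $W^{1,2}$-gradient at $q$ is $\iota^*\bigl(Y_{\sigma,g}(q)\dot q\bigr)$, where $\iota^*\colon L^2\to W^{1,2}$ is the adjoint of the (compact) Sobolev inclusion, and $q\mapsto Y_{\sigma,g}(q)\dot q$ is a smooth map $W^{1,2}\to L^2$ because $q\mapsto Y_{\sigma,g}\circ q$ is smooth from $W^{1,2}$ to $W^{1,2}$ and multiplication $W^{1,2}\times L^2\to L^2$ is continuous and bilinear (using $W^{1,2}\hookrightarrow C^0$). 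Since $\mathcal{S}_L$ is $C^1$ with G\^ateau differentiable, locally Lipschitz differential, the same is true of the sum $\mathcal{S}_{L,\sigma}$, which yields the first two assertions.

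Next I would identify the critical points. The first variation formula \eqref{eq:differential} already shows that $q\in\mathcal{L}_\tau^\alpha M$ is a critical point of $\mathcal{S}_{L,\sigma}$ if and only if $q$ is a weak solution of the Euler--Lagrange equation \eqref{eq:EL}. To promote a weak solution to a smooth one I would run the standard bootstrap: a $W^{1,2}$ loop is continuous, so the solved form $\nabla_t\dot q=[\nabla_{vv}L]^{-1}\bigl(\nabla_q L-\nabla_{qv}L\,\dot q+Y_{\sigma,g}(q)\dot q\bigr)$ has a right-hand side whose regularity can be fed back into $\dot q$ step by step; the only new term, $Y_{\sigma,g}(q)\dot q$, is no worse than the term $\nabla_{qv}L\,\dot q$ already present, so the argument of \cite{AbbondandoloSchwarz2009a} carries over word for word and $q\in C^\infty$.

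The heart of the matter is the Fredholm property and the finiteness of the Morse index of the second G\^ateau differential at a (now smooth) critical point $q$. I would again split $d^2\mathcal{S}_{L,\sigma}(q)=d^2\mathcal{S}_L(q)+d^2\mathcal{A}_\sigma(q)$. By \cite{AbbondandoloSchwarz2009a}, the bounded self-adjoint operator on $T_q\mathcal{L}_\tau^\alpha M\cong W^{1,2}$ representing $d^2\mathcal{S}_L(q)$ has the form $P+K$, where $P$ is positive definite and boundedly invertible — it is the operator attached to the leading term $\int_0^\tau\langle\nabla_{vv}L\,\nabla_t\xi,\nabla_t\eta\rangle\,dt$, positive by \textbf{(L1)} — and $K$ is compact; hence it is Fredholm of index $0$ with finite-dimensional negative eigenspace. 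It therefore suffices to show that the operator $Q$ representing $d^2\mathcal{A}_\sigma(q)$ is compact, for then $d^2\mathcal{S}_{L,\sigma}(q)=P+(K+Q)$ is still a positive-definite operator plus a compact one. This is where the smoothness of $\mathcal{A}_\sigma$ pays off: at a critical point, $Q=\iota^*\circ D(q)$, where $D(q)\colon W^{1,2}\to L^2$ is the derivative of $q\mapsto Y_{\sigma,g}(q)\dot q$, and since $\iota^*\colon L^2\to W^{1,2}$ is compact, so is $Q$. (Equivalently, $d^2\mathcal{A}_\sigma(q)(\xi,\eta)$ is a bounded bilinear form on $W^{1,2}$ in which at most one of $\xi,\eta$ enters through its covariant derivative, hence one that factors through the compact inclusion $W^{1,2}\hookrightarrow L^2$.) This gives the Fredholm property (index $0$) and the finiteness of the Morse index, completing the proof.

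I expect the main obstacle to be precisely this last point: pinning down the sense in which the magnetic contribution is genuinely lower order, so that it cannot spoil the Fredholm analysis. Everything else is an essentially mechanical transfer of the work of Abbondandolo--Schwarz and Mazzucchelli, once one has checked that the addition of the smooth functional $\mathcal{A}_\sigma$ interferes neither with their regularity statements nor with their bootstrap estimates.
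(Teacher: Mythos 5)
Your proposal is correct and takes the same route as the paper, which in fact offers no argument at all beyond the assertion that the proof of Abbondandolo--Schwarz (and Mazzucchelli) for $\sigma=0$ carries over; your write-up supplies precisely the verification the paper leaves implicit, namely that $\mathcal{A}_\sigma$ is a smooth functional on $\mathcal{L}_\tau^\alpha M$ whose second differential factors through the compact inclusion $W^{1,2}\hookrightarrow L^2$ and is therefore a compact perturbation that affects neither the regularity bootstrap nor the Fredholm analysis. The only cosmetic point is that the leading quadratic form $\int_0^\tau\langle\nabla_{vv}L\,\nabla_t\xi,\nabla_t\eta\rangle\,dt$ is degenerate on constants, so the invertible positive part $P$ must include an $L^2$ term (harmlessly, since that term is itself compact on $W^{1,2}$).
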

Recall that a $C^{1}$-functional $\mathcal{S}:\mathcal{M}\rightarrow\mathbb{R}$
on a Riemannian Hilbert manifold $\mathcal{M}$ satisfies the \textbf{Palais-Smale
condition }if every sequence $(q_{m})_{m\in\mathbb{N}}\subseteq\mathcal{M}$
for which $\mathcal{S}(q_{m})$ is bounded and $\left\Vert d\mathcal{S}(q_{m})\right\Vert \rightarrow0$
admits a convergent subsequence (here $\left\Vert \cdot\right\Vert $
denotes the dual norm on $T_{q_{m}}^{*}\mathcal{M}$). The main result
we wish to prove in this Appendix is the following statement. 
\begin{thm}
\label{thm:PS}Let $\sigma\in\Omega^{2}(M)$ satisfy \textbf{\emph{(}}\emph{$\boldsymbol{\sigma}$}\textbf{\emph{1)}},
let $\alpha\in[S^{1},M]$ denote a $\sigma$-atoroidal class, and
let $L\in C^{\infty}(\mathbb{S}_{\tau}\times TM,\mathbb{R})$ satisfy
\textbf{\emph{(L1) }}and \textbf{\emph{(L2)}}. Then there exists $\delta_{0}(L,\sigma,g)>0$
such that if $\tau\left|\delta\right|<\delta_{0}(L,\sigma,g)$ then
$\mathcal{S}_{L,\delta\sigma}:\mathcal{L}_{\tau}^{\alpha}M\rightarrow\mathbb{R}$
satisfies the Palais-Smale condition.
\end{thm}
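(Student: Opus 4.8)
The plan is to run the Abbondandolo--Schwarz argument for the Palais--Smale property of a Lagrangian action functional on a Hilbert-manifold loop space (\cite{AbbondandoloSchwarz2009a}, with a detailed exposition in the case $\sigma=0$ in \cite{Mazzucchelli2011}). The only structural novelty is the magnetic term in $\mathcal{S}_{L,\delta\sigma}=\mathcal{S}_L+\mathcal{A}_{\delta\sigma}$, which we will control by means of the quadratic isoperimetric inequality of Lemma \ref{lem:BF-2}; this is the one place where the smallness of $\tau|\delta|$ is used. Recall that Proposition \ref{prop:morse index} already tells us that $\mathcal{S}_{L,\delta\sigma}$ is $C^1$, so that the Palais--Smale condition is meaningful and the first-variation formula \eqref{eq:differential} is available.

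The first step would be to establish an a priori $W^{1,2}$-bound on any Palais--Smale sequence $(q_m)$. From \textbf{(L1)}, Taylor's formula, and the compactness of $M$ (which bounds $L(t,q,0)$ and $\nabla_vL(t,q,0)$ uniformly) one gets a constant $c>0$ with $\mathcal{S}_L(q)\ge\frac{\ell_1}{4}\|\dot q\|_{L_g^2(\mathbb{S}_\tau)}^2-c$ for all $q\in\mathcal{L}_\tau^\alpha M$, while Lemma \ref{lem:BF-2} applied to $\delta\sigma$, together with the Cauchy--Schwarz estimate $\bigl(\int_0^\tau|\dot q|\,dt\bigr)^2\le\tau\|\dot q\|_{L_g^2(\mathbb{S}_\tau)}^2$, gives $|\mathcal{A}_{\delta\sigma}(q)|\le|\delta|\,C_0(\sigma,g)\,\tau\,\|\dot q\|_{L_g^2(\mathbb{S}_\tau)}^2+|\delta|\,C_1(\sigma,g,\alpha)$. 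Setting
\[
\delta_0(L,\sigma,g):=\frac{\ell_1}{4\,C_0(\sigma,g)}
\]
(and $\delta_0(L,\sigma,g):=\infty$ when $\sigma$ satisfies \textbf{($\boldsymbol{\sigma}$0)}, using the linear isoperimetric inequality of \cite[Lemma 4.4]{BaeFrauenfelder2010} in place of Lemma \ref{lem:BF-2}, exactly as in the proof of Lemma \ref{lem:claim 2}), we find that for $\tau|\delta|<\delta_0(L,\sigma,g)$ there are $\kappa>0$ and $c'$ with $\mathcal{S}_{L,\delta\sigma}(q)\ge\kappa\|\dot q\|_{L_g^2(\mathbb{S}_\tau)}^2-c'$. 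Thus $\mathcal{S}_{L,\delta\sigma}(q_m)$ bounded forces $(\dot q_m)$ to be bounded in $L^2$, and since $M$ is compact, $(q_m)$ is bounded in $\mathcal{L}_\tau^\alpha M=W^{1,2}(\mathbb{S}_\tau,M)$. Passing to a subsequence we get $q_m\rightharpoonup q$ weakly in $W^{1,2}$ and, by the compact Sobolev embedding, $q_m\to q$ in $C^0$; in particular $q\in\mathcal{L}_\tau^\alpha M$.

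The second step is to upgrade weak convergence to strong convergence, that is, to prove $\|\dot q_m-\dot q\|_{L_g^2(\mathbb{S}_\tau)}\to0$. Fixing an isometric embedding $(M,g)\hookrightarrow\mathbb{R}^N$, one takes the test vector field $\xi_m\in T_{q_m}\mathcal{L}_\tau^\alpha M$ obtained by projecting the $\mathbb{R}^N$-valued curve $q_m-q$ fibrewise onto $T_{q_m(t)}M$; then $\|\xi_m\|_{W_g^{1,2}(\mathbb{S}_\tau)}$ is bounded, $\varepsilon_m:=\|\xi_m\|_{C^0}\to0$, and $\nabla_t\xi_m=\dot q_m-\dot q+R_m$ with $\|R_m\|_{L_g^2(\mathbb{S}_\tau)}\to0$. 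Since $\|d\mathcal{S}_{L,\delta\sigma}(q_m)\|\to0$ and $(\xi_m)$ is bounded, $d\mathcal{S}_{L,\delta\sigma}(q_m)[\xi_m]\to0$. Inserting $\xi_m$ into \eqref{eq:differential} (where, for $\mathcal{S}_{L,\delta\sigma}$, the Lorentz term reads $\delta\int_0^\tau\langle Y_{\sigma,g}(q)\dot q,\xi\rangle\,dt$), rearranging, and discarding the summands that vanish in the limit --- namely $\int_0^\tau\langle\nabla_vL(t,q_m,\dot q_m),R_m\rangle\,dt\to0$ (as $\|\nabla_vL(\cdot,q_m,\dot q_m)\|_{L^2}$ is bounded by \textbf{(L2)} and the compactness of $M$), the terms $\int_0^\tau\langle\nabla_qL(t,q_m,\dot q_m),\xi_m\rangle\,dt$ and $\delta\int_0^\tau\langle Y_{\sigma,g}(q_m)\dot q_m,\xi_m\rangle\,dt$ (each $O(\varepsilon_m(1+\|\dot q_m\|_{L_g^2(\mathbb{S}_\tau)}^2))$, using \textbf{(L2)} and $\|Y_{\sigma,g}\|_{L_g^\infty}<\infty$), and $\int_0^\tau\langle\nabla_vL(t,q_m,\dot q),\dot q_m-\dot q\rangle\,dt\to0$ (because $\nabla_vL(\cdot,q_m,\dot q)\to\nabla_vL(\cdot,q,\dot q)$ in $L^2$ while $\dot q_m-\dot q\rightharpoonup0$) --- one is left with
\[
\int_0^\tau\bigl\langle\nabla_vL(t,q_m,\dot q_m)-\nabla_vL(t,q_m,\dot q),\ \dot q_m-\dot q\bigr\rangle\,dt\ \longrightarrow\ 0.
\]
By the uniform convexity \textbf{(L1)} this integral is at least $\ell_1\|\dot q_m-\dot q\|_{L_g^2(\mathbb{S}_\tau)}^2$, and hence $q_m\to q$ in $\mathcal{L}_\tau^\alpha M$, which completes the proof.

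I expect the second step to be the main obstacle. One must set up the ``difference'' vector field $\xi_m$ on the curved manifold $M$ with some care, control the curvature corrections hidden in the identity $\nabla_t\xi_m=\dot q_m-\dot q+R_m$, and check that each of the error integrals above is genuinely negligible in spite of the quadratic growth of $\nabla_qL$ and $\nabla_vL$ in the velocity --- which works because each is paired against a factor that is small in $C^0$. This is precisely the computation carried out in \cite{AbbondandoloSchwarz2009a}; since the entire magnetic contribution to $d\mathcal{S}_{L,\delta\sigma}$ is the single term $\delta\int_0^\tau\langle Y_{\sigma,g}(q_m)\dot q_m,\xi_m\rangle\,dt$, which is dominated by $\varepsilon_m\to0$ with no constraint on $\tau|\delta|$, that computation carries over to the present setting unchanged.
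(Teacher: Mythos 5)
Your proposal is correct and follows essentially the same route as the paper: the smallness condition on $\tau\left|\delta\right|$ enters only through the coercivity estimate obtained by combining the quadratic lower bound from \textbf{(L1)} with the quadratic isoperimetric inequality of Lemma \ref{lem:BF-2} (the paper takes $\delta_{0}(L,\sigma,g)=\ell_{0}/C_{0}$ for the analogous constant $\ell_{0}$), and the passage from weak to strong $W^{1,2}$-convergence is the standard Abbondandolo--Schwarz convexity argument culminating in $\int_{0}^{\tau}\langle\nabla_{v}L(t,q_{m},\dot{q}_{m}),\dot{q}_{m}-\dot{q}\rangle\,dt\rightarrow0$ plus \textbf{(L1)}. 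The only immaterial difference is technical: the paper handles the manifold-valued difference $q_{m}-q$ via the Abbondandolo--Figalli localization in a chart on $\mathcal{L}_{\tau}^{\alpha}M$, whereas you use an isometric embedding and a fibrewise-projected test field $\xi_{m}$.
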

This theorem was proved for the case $\sigma=0$ originally by Benci
\cite{Benci1986}; our proof however will closely follow that of Abbondandolo
and Figalli \cite[Appendix A]{AbbondandoloFigalli2007}. The proof
of Theorem \ref{thm:PS} makes use of Lemma \ref{lem:BF-2}.
\begin{proof}
(of Theorem \ref{thm:PS})

It follows from \textbf{(L1) }that there exists a constant $D>0$
such that $L(t,q,v)\geq\ell_{0}\left|v\right|^{2}-D$ for all $(t,q,v)\in\mathbb{S}_{\tau}\times TM$.
Thus for any $q\in\mathcal{L}_{\tau}^{\alpha}M$ by Lemma \ref{lem:BF-2}
one has 
\begin{equation}
\mathcal{S}_{L,\delta\sigma}(q)\geq\mathcal{S}_{L}(q)-\left|\mathcal{A}_{\delta\sigma}(q)\right|\geq(\ell_{0}-\left|\delta\right|C_{0}\tau)\left\Vert \dot{q}\right\Vert _{L_{g}^{2}(\mathbb{S}_{\tau})}^{2}-(\left|\delta\right|C_{1}+D).\label{eq:delta small for lag}
\end{equation}
Define 
\begin{equation}
\delta(L,\sigma,g):=\frac{\ell_{0}}{C_{0}},\label{eq:delta L}
\end{equation}
and fix $\delta\in\mathbb{R}$ such that $\tau\left|\delta\right|<\delta(L,\sigma,g)$.
Suppose $(q_{m})_{m\in\mathbb{N}}\subseteq\mathcal{L}_{\tau}^{\alpha}M$
is a sequence such that $\mathcal{S}_{L,\delta\sigma}(q_{m})$ is
bounded and $\left\Vert d\mathcal{S}_{L,\delta\sigma}(q_{m})\right\Vert \rightarrow0$
in the dual norm of $T_{q_{m}}^{*}\mathcal{L}_{\tau}^{\alpha}M$.
Then \eqref{eq:delta small for lag} implies that the sequence $(\dot{q}_{m})$
is bounded in $L_{g}^{2}$. Since 
\[
\mbox{dist}(q_{m}(t),q_{m}(s))\leq\int_{s}^{t}\left|\dot{q}_{m}\right|dr\leq\left|s-t\right|^{1/2}\left\Vert \dot{q}_{m}\right\Vert _{L_{g}^{2}(\mathbb{S}_{\tau})},
\]
the sequence $(q_{m})$ is equicontinuous, and the Arzel\`a-Ascoli
theorem implies that up to passing to a subsequence, we may assume
$q_{m}$ converges uniformly to some $q\in C^{0}(\mathbb{S}_{\tau},M)$. 

We now employ the \textbf{localization }argument of Abbondandolo and
Figalli, which allows us to reduce the problem to one on $\mathbb{R}^{n}$
(roughly speaking, this involves making an intelligent choice of a
chart on $\mathcal{L}_{\tau}^{\alpha}M$ about $q$ - see \cite[Remark 3.4.1]{Mazzucchelli2011}).
As a result, from now on let us assume $L$ is defined on $\mathbb{S}_{\tau}\times U\times\mathbb{R}^{n}$
for some open set $U$ of $\mathbb{R}^{n}$, with $\sigma\in\Omega^{2}(U)$,
and that $(q_{m})\subseteq\mathcal{L}_{\tau}U$ is a sequence such
that $\mathcal{S}_{L,\delta\sigma}(q_{m})$ is bounded and $\left\Vert d\mathcal{S}_{L,\delta\sigma}(q_{m})\right\Vert \rightarrow0$
in the dual norm on $T_{q_{m}}^{*}\mathcal{L}_{\tau}U$, with $(\dot{q}_{m})$
bounded in $L^{2}$ and $q_{m}$ converging uniformly to some $q\in C^{0}(\mathbb{S}_{\tau},U)$. 

This automatically implies that $q\in\mathcal{L}_{\tau}^{\alpha}U$,
and up to passing to a subsequence, $q_{m}$ converges weakly to $q$
in $\mathcal{L}_{\tau}\mathbb{R}^{n}$. To complete the proof we need
to show that this convergence is strong in $W^{1,2}$. Since $(q_{m})$
is bounded in $W^{1,2}$, we have $d\mathcal{S}_{L,\delta\sigma}(q_{m})(q_{m}-q)\rightarrow0$,
and hence by \eqref{eq:differential} (expressed now in the simpler
setting of $\mathbb{R}^{n}$)
\[
\int_{0}^{\tau}\left(\partial_{q}L_{t}(q_{m},\dot{q}_{m})\cdot(q_{m}-q)+\partial_{v}L_{t}(q_{m},\dot{q}_{m})\cdot(\dot{q}_{m}-\dot{q})+\delta Y(q_{m})\dot{q}_{m}\cdot(q_{m}-q)\right)dt\rightarrow0.
\]
The term $\partial_{q}L_{t}(q_{m},\dot{q}_{m})$ is bounded in $L^{2}$
by \textbf{(L2)}. Similarly $Y(q_{m})\dot{q}_{m}$ is bounded in $L^{2}$,
and consequently we have 
\[
\int_{0}^{\tau}\partial_{v}L_{t}(q_{m},\dot{q}_{m})\cdot(\dot{q}_{m}-\dot{q})dt\rightarrow0.
\]
From this it is straightforward to show that $\left\Vert \dot{q}_{m}-\dot{q}\right\Vert _{L_{g}^{2}(\mathbb{S}_{\tau})}\rightarrow0$
using \textbf{(L1) }and \textbf{(L2)}; the proof is identical to \cite[Proposition 3.5.2]{Mazzucchelli2011},
and hence we omit the details. 
\end{proof}
In general the functional $\mathcal{S}_{L,\sigma}$ is \textbf{not
}of class $C^{2}$. In fact, arguing as in \cite[Proposition 3.2]{AbbondandoloSchwarz2009a},
one sees that $\mathcal{S}_{L,\sigma}$ is of class $C^{2}$ if and
only if the function $v\mapsto L(t,q,v)$ is a polynomial of degree
at most 2 for each $(t,q)\in\mathbb{S}_{\tau}\times M$. One would
think that this means that in general there is no hope of doing infinite
dimensional Morse theory with $\mathcal{S}_{L,\sigma}$. Indeed, such
a Morse theory needs at least $C^{2}$-regularity - for example, the
Morse Lemma requires $C^{2}$-regularity - see \cite{Chang1993}.
Nevertheless, under a suitable non-degeneracy assumption (see Condition
\textbf{(N) }below), it \textbf{is }still possible to construct a
\textbf{Morse complex} for $\mathcal{S}_{L,\delta\sigma}$ (see Theorem
\ref{thm:(The-Morse-homology} below). The only missing ingredient
we still need for this is the existence of a \textbf{pseudo-gradient
}for $\mathcal{S}_{L,\sigma}$, which we will discuss shortly in Proposition
\ref{prop:pseudo gradient}.\newline 

The final condition we impose is a non-degeneracy condition:
\begin{lyxlist}{00.00.0000}
\item [{\textbf{(N')}}] Every solution $q$ of the Euler-Lagrange equations
\eqref{eq:EL} is \textbf{non-degenerate}, which means that there
are no nonzero periodic Jacobi fields along $q$.
\end{lyxlist}
Asking for $q$ to be a non-degenerate solution is equivalent to requiring
$q$ to be a non-degenerate critical point of $\mathcal{S}_{L,\sigma}$,
in the sense that the symmetric bilinear form $d^{2}\mathcal{S}_{L,\sigma}(q)$
on $T_{q}\mathcal{L}_{\tau}^{\alpha}M$ is non-degenerate. Moreover
if $H$ is the corresponding Fenchel dual Hamiltonian then $(L,\sigma,\alpha)$
satisfies Condition \textbf{(N') }if and only if\textbf{ }$(H,\sigma,\alpha)$
satisfies Condition \textbf{(N)}.\newline 

The following result can be proved in exactly the same way as \cite[Theorem 4.1]{AbbondandoloSchwarz2009a}.
\begin{prop}
\label{prop:pseudo gradient}Let $\sigma\in\Omega^{2}(M)$ satisfy
\textbf{\emph{(}}\emph{$\boldsymbol{\sigma}$}\textbf{\emph{1)}},
and let $L\in C^{\infty}(\mathbb{S}_{\tau}\times TM,\mathbb{R})$
satisfy \textbf{\emph{(L1) }}and \textbf{\emph{(L2)}}. Fix a $\sigma$-atoroidal
class $\alpha\in[S^{1},M]$, and assume that $(L,\sigma,\alpha)$
satisfies Condition \textbf{\emph{(N')}}. Then there exists a \textbf{\emph{pseudo-gradient}}
for $\mathcal{S}_{L,\sigma}$. That is, there exists a smooth bounded
vector field $\mathcal{G}$ on $\mathcal{L}_{\tau}^{\alpha}M$ whose
zeros are precisely the smooth solutions of the Euler-Lagrange equations
\eqref{eq:EL}, together with a continuous function $\varepsilon\in C(\mathbb{R},\mathbb{R}^{+})$
such that 
\[
d\mathcal{S}_{L,\sigma}(q)\mathcal{G}(q)\geq\varepsilon(\mathcal{S}_{L,\sigma}(q))\left\Vert d\mathcal{S}_{L,\sigma}(q)\right\Vert \ \ \ \mbox{for all }q\in\mathcal{L}_{\tau}^{\alpha}M,
\]
and such that for any solution $q\in \Lambda^{\tau}_{\alpha}M$ of \eqref{eq:EL} one has
\[
d^{2}\mathcal{S}_{L,\sigma}(q)(\xi,\zeta)=\left\langle \nabla\mathcal{G}(q)\xi,\zeta\right\rangle _{W_{g}^{1,2}(\mathbb{S}_{\tau})}\ \ \ \mbox{for all }\xi,\zeta\in W^{1,2}(q^{*}TM)
\]
(here $\nabla\mathcal{G}(q):T_{q}\mathcal{L}_{\tau}^{\alpha}M\rightarrow T_{q}\mathcal{L}_{\tau}^{\alpha}M$
is defined by $\nabla\mathcal{G}(q)\xi:=[\mathcal{G},X](q)$, where
$X$ is any vector field on $\mathcal{L}_{\tau}^{\alpha}M$ such that
$X(q)=\xi$). 
\end{prop}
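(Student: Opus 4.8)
The plan is to reproduce the construction of Abbondandolo and Schwarz \cite[Theorem 4.1]{AbbondandoloSchwarz2009a}, checking only that the term contributed by $\mathcal{A}_\sigma$ does not interfere. Recall from \eqref{eq:differential} that $d\mathcal{S}_{L,\sigma}$ differs from $d\mathcal{S}_L$ only by the term $\xi\mapsto\int_0^\tau\langle Y(q)\dot q,\xi\rangle\,dt$, where $Y=Y_{\sigma,g}$ is the smooth, globally bounded Lorentz force; this is a bounded linear functional in $\xi$ for each fixed $q$, depending smoothly on $q$, so passing to $W^{1,2}$-gradients it perturbs $\nabla\mathcal{S}_{L,\sigma}$ by a smooth vector field, while by Proposition \ref{prop:morse index} the full gradient $\nabla\mathcal{S}_{L,\sigma}$ exists and is locally Lipschitz but in general fails to be differentiable because $\mathcal{S}_{L,\sigma}$ need not be $C^2$. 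The construction therefore splits into a global part, valid away from the critical set where any locally Lipschitz pseudo-gradient will do, and a local part near each critical point, where the vector field must be replaced by a smooth field whose linearization equals $d^2\mathcal{S}_{L,\sigma}$.

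First I would record that the critical set is discrete: by Condition \textbf{(N')} each solution of \eqref{eq:EL} is an isolated critical point (a standard ODE argument, as in \cite{AbbondandoloSchwarz2009a}), and by Theorem \ref{thm:PS} each sublevel set contains only finitely many. Around a critical point $q_0$ one works in a chart of $\mathcal{L}_\tau^\alpha M$ in which $d^2\mathcal{S}_{L,\sigma}(q_0)$ is represented by a bounded self-adjoint operator $A_0$. By \cite{AbbondandoloSchwarz2009a} the second variation of $\mathcal{S}_L$ has the form $\mathbb{1}+(\text{compact})$; and the second variation of $\mathcal{A}_\sigma$ is $(\xi,\zeta)\mapsto\int_0^\tau\bigl(\langle Y(q_0)\dot\xi,\zeta\rangle+\langle(D_\zeta Y)(q_0)\dot q_0,\xi\rangle\bigr)\,dt$, and since in each variable one of the two entries carries at most one time-derivative and the other none, the operator on $W^{1,2}(\mathbb{S}_\tau)$ representing it factors through the compact Rellich embedding $W^{1,2}(\mathbb{S}_\tau)\hookrightarrow C^0(\mathbb{S}_\tau)$ and is therefore compact. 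Hence $A_0=\mathbb{1}+K_0$ with $K_0$ compact, so $A_0$ is Fredholm of index zero, and \textbf{(N')} makes it invertible. Exactly as in \cite[Theorem 4.1]{AbbondandoloSchwarz2009a} one now builds a smooth vector field on a small neighbourhood $U_0$ of $q_0$, equal to first order to the field represented by $A_0$, so that it is smooth with linearization $A_0=d^2\mathcal{S}_{L,\sigma}(q_0)$ at $q_0$ and satisfies $d\mathcal{S}_{L,\sigma}(q)\mathcal{G}(q)>0$ on $U_0\setminus\{q_0\}$.

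It remains to glue. Using that $\mathcal{L}_\tau^\alpha M$ is a paracompact Hilbert manifold, choose a locally finite cover by the neighbourhoods $U_0$ of the critical points together with open sets in the complement of the critical set carrying a locally Lipschitz pseudo-gradient of $\mathcal{S}_{L,\sigma}$, take a smooth partition of unity subordinate to it arranged so that only the local smooth field is active near each critical point, and let $\mathcal{G}$ be the resulting convex combination. Since the conditions $d\mathcal{S}_{L,\sigma}(q)V\ge0$ and $d\mathcal{S}_{L,\sigma}(q)V\ge c\,\|d\mathcal{S}_{L,\sigma}(q)\|$ are convex in $V$ and the local estimates are uniform on sublevel sets, $\mathcal{G}$ satisfies $d\mathcal{S}_{L,\sigma}(q)\mathcal{G}(q)\ge\varepsilon(\mathcal{S}_{L,\sigma}(q))\,\|d\mathcal{S}_{L,\sigma}(q)\|$ for a suitable continuous $\varepsilon>0$; multiplying by a smooth cutoff of $\|\mathcal{G}\|_{W_g^{1,2}(\mathbb{S}_\tau)}$ makes $\mathcal{G}$ bounded without changing its zeros, the pseudo-gradient inequality, or its linearizations at the critical points.

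The main obstacle is the local statement near a non-degenerate critical point: producing a smooth vector field whose linearization reproduces the Fredholm operator $d^2\mathcal{S}_{L,\sigma}(q_0)$ even though $\mathcal{S}_{L,\sigma}$ itself is only $C^1$. This is precisely the content of \cite[Theorem 4.1]{AbbondandoloSchwarz2009a}, and the only genuinely new input needed here is the verification above that the magnetic contribution to the second variation is compact, so that the $\mathbb{1}+(\text{compact})$ normal form and invertibility under \textbf{(N')} survive; granting this, the Abbondandolo--Schwarz argument goes through unchanged.
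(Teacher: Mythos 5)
Your proposal is correct and follows essentially the same route as the paper, which proves Proposition \ref{prop:pseudo gradient} simply by asserting that the argument of \cite[Theorem 4.1]{AbbondandoloSchwarz2009a} carries over unchanged. Your explicit verification that the magnetic contribution to the second variation is a compact perturbation (so that the Fredholm normal form and the invertibility under \textbf{(N')} survive) is precisely the point the paper leaves implicit, and the rest of your outline matches the Abbondandolo--Schwarz construction the paper invokes.
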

As mentioned above, Proposition \ref{prop:morse index}, Theorem \ref{thm:PS},
and Proposition \ref{prop:pseudo gradient} imply that one can define
the Morse complex of $\mathcal{S}_{L,\delta\sigma}$ for $\tau\left|\delta\right|<\delta_{0}(L,\sigma,g)$.
We refer the reader to \cite{AbbondandoloMajer2006} for more information
on the construction of the Morse complex, and for the proof of the
following \textbf{Morse homology theorem}. 
\begin{thm}
\textbf{\textup{\label{thm:(The-Morse-homology}}}Let $\sigma\in\Omega^{2}(M)$
satisfy \textbf{\emph{(}}\emph{$\boldsymbol{\sigma}$}\textbf{\emph{1)}},
and let $L\in C^{\infty}(\mathbb{S}_{\tau}\times TM,\mathbb{R})$
satisfy \textbf{\emph{(L1) }}and \textbf{\emph{(L2)}}. Fix a $\sigma$-atoroidal
class $\alpha\in[S^{1},M]$, and fix $\delta\in\mathbb{R}$ such that
$\tau\left|\delta\right|<\delta_{0}(L,\sigma,g)$, and assume that
$(L,\delta\sigma,\alpha)$ satisfies Condition \textbf{\emph{(N')}}.
Denote by $CM_{*}^{\alpha}(L,\delta\sigma,\tau)$ the free $\mathbb{Z}_{2}$-module
generated by the solutions $q$ of the Euler-Lagrange equations \eqref{eq:EL},
graded by their Morse index (as a critical point of $\mathcal{S}_{L,\delta\sigma}$).
Then it is possible to define a map $\partial^{\textrm{\emph{Morse}}}:CM_{*}^{\alpha}(L,\delta\sigma,\tau)\rightarrow CM_{*-1}^{\alpha}(L,\delta\sigma,\tau)$
such that $\partial^{\textrm{\emph{Morse}}}\circ\partial^{\textrm{\emph{Morse}}}=0$,
and such that the associated \textbf{\emph{Morse homology }}
\[
HM_{*}^{\alpha}(L,\delta\sigma,\tau):=H_{*}(CM_{*}^{\alpha}(L,\delta\sigma,\tau);\partial^{\textrm{\emph{Morse}}})
\]
 is isomorphic to the singular homology $H_{*}(\mathcal{L}_{\tau}^{\alpha}M;\mathbb{Z}_{2})$. 
\end{thm}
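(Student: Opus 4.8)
The plan is to obtain Theorem~\ref{thm:(The-Morse-homology} by checking that $\mathcal{S}_{L,\delta\sigma}$ fits into the general framework for constructing the Morse complex of a functional on a separable Hilbert manifold developed by Abbondandolo--Majer \cite{AbbondandoloMajer2006} (and used in the case $\sigma=0$ by Abbondandolo--Schwarz \cite{AbbondandoloSchwarz2009a}), and then invoking that machinery. Four ingredients are required, and three are already in hand: the $C^1$ regularity of $\mathcal{S}_{L,\delta\sigma}$ together with the G\^ateaux differentiability and local Lipschitz continuity of its differential, and the Fredholm property and finiteness of the Morse index of each critical point, are Proposition~\ref{prop:morse index}; the existence of a bounded pseudo-gradient $\mathcal{G}$ whose linearization at a critical point $q$ represents $d^2\mathcal{S}_{L,\delta\sigma}(q)$ is Proposition~\ref{prop:pseudo gradient}; and the Palais--Smale condition for $\tau|\delta|<\delta_0(L,\sigma,g)$ is Theorem~\ref{thm:PS}. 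The fourth ingredient is boundedness below: for $\tau|\delta|<\delta_0(L,\sigma,g)$ the coefficient $\ell_0-|\delta|C_0\tau$ in \eqref{eq:delta small for lag} is positive, so $\mathcal{S}_{L,\delta\sigma}$ is bounded below on $\mathcal{L}_\tau^\alpha M$ and each sublevel set $\{\mathcal{S}_{L,\delta\sigma}\le a\}$ is bounded in $W^{1,2}$; combined with Palais--Smale and Condition~\textbf{(N')} (which makes critical points isolated) this shows that the set of critical points in each sublevel set is finite, so $CM_*^\alpha(L,\delta\sigma,\tau)$ is locally finite in each degree and the grading is bounded below.

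With these facts the construction runs as usual. First one perturbs $\mathcal{G}$ within the class of pseudo-gradients described in Proposition~\ref{prop:pseudo gradient} (keeping its set of zeros and the identity $\nabla\mathcal{G}(q)=d^2\mathcal{S}_{L,\delta\sigma}(q)$ at zeros) to achieve the Morse--Smale transversality condition; this is a Kupka--Smale type genericity statement valid on separable Hilbert manifolds. Because each Morse index $m(q)$ is finite, the unstable manifold $W^u(q)$ of the flow of $-\mathcal{G}$ is an embedded submanifold of dimension $m(q)$ and the stable manifold $W^s(q)$ has codimension $m(q)$; for critical points $q_-,q_+$ with $m(q_-)-m(q_+)=1$ the transverse intersection $W^u(q_-)\cap W^s(q_+)$, modulo the flow, is a compact $0$-manifold, compactness following from boundedness below, Palais--Smale, and the fact that a flow line from $q_-$ to $q_+$ has action confined to $[\mathcal{S}_{L,\delta\sigma}(q_+),\mathcal{S}_{L,\delta\sigma}(q_-)]$ so that no mass escapes. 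Counting these points modulo $2$ defines $\partial^{\textrm{Morse}}$.

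The identity $\partial^{\textrm{Morse}}\circ\partial^{\textrm{Morse}}=0$ follows from the standard analysis of the one-dimensional moduli spaces $(W^u(q_-)\cap W^s(q_+))/\mathbb{R}$ with $m(q_-)-m(q_+)=2$: these are compact $1$-manifolds whose boundary points are in bijection with pairs of consecutive flow lines broken at an intermediate critical point. Finally, the isomorphism $HM_*^\alpha(L,\delta\sigma,\tau)\cong H_*(\mathcal{L}_\tau^\alpha M;\mathbb{Z}_2)$ is the general ``Morse homology computes singular homology'' theorem of \cite{AbbondandoloMajer2006}: since $\mathcal{S}_{L,\delta\sigma}$ is bounded below, satisfies Palais--Smale, and has only non-degenerate critical points of finite index, its sublevel sets exhaust $\mathcal{L}_\tau^\alpha M$ and their homotopy type changes across each critical level by attaching cells of dimension equal to the Morse index, so the filtration by sublevel sets produces a cellular chain complex canonically identified with $(CM_*^\alpha(L,\delta\sigma,\tau),\partial^{\textrm{Morse}})$.

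The main obstacle is that $\mathcal{S}_{L,\delta\sigma}$ is in general only $C^1$, not $C^2$ (it is $C^2$ precisely when $v\mapsto L(t,q,v)$ is fibrewise polynomial of degree $\le 2$), so the classical tools — the Morse Lemma, a smooth gradient flow, smooth (un)stable manifolds — are not literally available; the entire construction must be carried out with the pseudo-gradient $\mathcal{G}$ of Proposition~\ref{prop:pseudo gradient}, using the $C^1$ versions of the (un)stable manifold theorem and of transversality theory. This is exactly the low-regularity framework of \cite{AbbondandoloMajer2006,AbbondandoloSchwarz2009a}. The one place where the magnetic perturbation could have caused trouble — the a priori bounds and Palais--Smale compactness in the presence of $\mathcal{A}_{\delta\sigma}$ — has already been handled via the quadratic isoperimetric inequality (Lemma~\ref{lem:BF-2}) in the proofs of Theorem~\ref{thm:PS} and Proposition~\ref{prop:pseudo gradient}; since $\mathcal{A}_{\delta\sigma}$ otherwise enters only as a smooth, fibrewise-linear lower-order term, the Abbondandolo--Majer arguments apply verbatim, and the work of the present proof is to assemble the pieces listed above and cite \cite{AbbondandoloMajer2006}.
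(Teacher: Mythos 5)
Your proposal is correct and follows essentially the same route as the paper, which itself gives no independent proof but simply observes that Proposition \ref{prop:morse index}, Theorem \ref{thm:PS} and Proposition \ref{prop:pseudo gradient} supply the hypotheses of the abstract Morse-complex machinery of \cite{AbbondandoloMajer2006} and then cites that reference for the construction of $\partial^{\textrm{Morse}}$ and the isomorphism with singular homology. Your additional remarks (boundedness below from \eqref{eq:delta small for lag}, local finiteness of the generators, and the need to work in the $C^1$/pseudo-gradient setting rather than with a genuine gradient flow) are accurate elaborations of what that citation covers.
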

\bibliographystyle{amsplain}
\bibliography{C:/Users/Will/desktop/willbibtex}

\noindent \emph{Address: }

\noindent (U. Frauenfelder) Department of Mathematics and Research
Institute of Mathematics, Seoul National University San 56-1 Shinrim-dong
Kwanak-gu, Seoul 151-747, Korea 

\noindent (W. J. Merry and G. P. Paternain) Department of Pure Mathematics
and Mathematical Statistics, University of Cambridge, Cambridge CB3
0WB, England\newline 

\noindent \emph{Email:}\texttt{ }

\noindent \texttt{frauenf@snu.ac.kr},\texttt{ w.merry@dpmms.cam.ac.uk},\texttt{
g.p.paternain@dpmms.cam.ac.uk}
\end{document}